\let\old@setaddresses\@setaddresses
\def\@setaddresses{\bigskip\bgroup\parindent 0pt\let\scshape\relax\old@setaddresses\egroup}
\newtheorem{theorem}{Theorem}[section]
\newtheorem{claim}{Claim}
\newtheorem{question}{Question}
\newtheorem{lemma}[theorem]{Lemma}
\newtheorem{corollary}[theorem]{Corollary}
\newtheorem{observation}[theorem]{Observation}
\theoremstyle{remark}
\newtheorem*{acknowledgement}{Acknowledgments}
\newcommand{\vast}{\bBigg@{4}}
\newcommand{\Vast}{\bBigg@{5}}
\definecolor{bulgarianrose}{rgb}{0.28, 0.02, 0.03}
\definecolor{gray}{rgb}{0.5, 0.5, 0.5}
\def\namedlabel#1#2{\begingroup
    #2%
    \def\@currentlabel{#2}%
    \phantomsection\label{#1}\endgroup
}
\pgfplotsset{compat=1.16}
\newcommand\tsup[2][2]{%
 \def\useanchorwidth{T}%
  \ifnum#1>1%
    \stackon[-.5pt]{\tsup[\numexpr#1-1\relax]{#2}}{\scriptscriptstyle\sim}%
  \else%
    \stackon[.5pt]{#2}{\scriptscriptstyle\sim}%
  \fi%
}
\newcommand{\lbox}{\mathrm{lbox}}
\newcommand{\ad}{\mathrm{ad}}
\newcommand{\boxi}{\mathrm{box}}
\title{Local Boxicity}
\author[L.~Esperet]{Louis Esperet}
\address[L.~Esperet]{Laboratoire G-SCOP (CNRS, Univ.\ Grenoble Alpes), Grenoble, France}
\email{louis.esperet@grenoble-inp.fr}
\author[L.~Lichev]{Lyuben Lichev}
\address[L.~Lichev]{Univ. Jean Monnet, Saint Etienne and Institut Camille Jordan,
Lyon, France}
\email{lyuben.lichev@univ-st-etienne.fr}
\thanks{L.\ Esperet is partially supported by the French ANR Projects GATO (ANR-16-CE40-0009-01), GrR (ANR-18-CE40-0032), and by LabEx PERSYVAL-lab (ANR-11-LABX-0025). }
\begin{document}

\maketitle
 
\begin{abstract}
A \emph{box} is the cartesian product of real
intervals, which are either bounded or equal to $\mathbb{R}$. A box is said to be
\emph{$d$-local} if at most $d$ of the intervals are bounded. 
In this paper, we investigate the recently
introduced \emph{local boxicity} of a graph $G$, which is the minimum
$d$ such that $G$ can be
represented as the intersection of $d$-local boxes in some dimension. We prove that
all graphs of maximum degree $\Delta$ have local boxicity $O(\Delta)$, while
almost all graphs of maximum degree $\Delta$ have local boxicity
$\Omega(\Delta)$, improving known upper and lower bounds. We also give
improved bounds on the local boxicity as a
function of the number of edges or the genus. Finally, we investigate
local boxicity through the lens of chromatic graph theory. We prove that
the family of
graphs of local boxicity at most 2 is \emph{$\chi$-bounded}, which means that
the chromatic number of the graphs in this class can be bounded by a
function of their clique number. This extends a classical  result on graphs of
boxicity at most 2.
\end{abstract}



\section{Introduction}

In this paper, a \emph{real interval} is either a bounded real interval, or equal to $\mathbb{R}$. A \emph{$d$-box} in $\mathbb{R}^d$ is the cartesian product of $d$ real intervals. A \emph{$d$-box representation} of a graph $G$ is a collection of $d$-boxes $(B_v)_{v\in V(G)}$ such that for any two vertices $u,v\in V(G)$, $u$
and $v$ are
adjacent in $G$ if and only if $B_u$ and $B_v$ intersect.
The \emph{boxicity} of a graph $G$, denoted by $\boxi(G)$, is the minimum integer $d$ such
that $G$ has a $d$-box representation\footnote{We remark that allowing
intervals of the type $(-\infty, a)$ and $(a, +\infty)$ does not
modify the definition of boxicity, but it will be more convenient to
restrict ourselves to real intervals as defined above (that are either
bounded or equal to $\mathbb{R}$) when we study
local boxicity.}. This parameter was introduced by Roberts~\cite{Rob}
in 1969, and has been extensively studied.

In this paper we investigate the following local variant
of boxicity, which was recently introduced by Bl{\"a}sius, Stumpf and
Ueckerdt~\cite{BSU} (see also~\cite{KU} for a more general framework
on local versions of graph covering parameters). A \emph{$d$-box} in
$\mathbb{R}^{d}$ is \emph{local} in dimension $i$ if its projection on
dimension $i$ is a bounded real interval (equivalently, if the
projection is distinct from $\mathbb{R}$). A \emph{$d$-local box} in
some dimension $d'\ge d$ is a $d'$-box that is local in at most $d$ dimensions. A
\emph{$d$-local box representation} of a graph $G$ in some dimension
$d'$ is  a collection 
$(B_v)_{v\in V(G)}$ of $d$-local boxes in dimension $d'$ such that for any two vertices $u,v\in V(G)$, $u$
and $v$ are
adjacent in $G$ if and only if $B_u$ and $B_v$ intersect.
The
\emph{local boxicity} of a graph $G$, denoted by $\lbox(G)$, is the minimum integer $d$ such
that $G$ has a $d$-local box representation in
some dimension.

It directly follows from the definition that for any graph $G$,
$\lbox(G)\le \boxi(G)$. This raises two types of natural questions:
(1) can we improve known upper bounds (and extend known lower bounds) on the boxicity to the local
boxicity, and (2) can we extend known properties of graphs of boxicity
at most $d$ to graphs of local boxicity at most $d$?

In this paper we prove several results along these lines. The boxicity
of graphs of maximum degree at most $\Delta$ has been extensively studied in the
literature~\cite{ABC,AC,CFS,Esp,SW}, with currently almost matching bounds of $O(\Delta
\log^{1+o(1)}\Delta)$~\cite{SW} and $\Omega(\Delta \log
\Delta)$~\cite{CFS,EKT}. It was proved by Majumder and
Mathew~\cite{MM} in 2018 that graphs of maximum degree $\Delta$ have
local boxicity at most $2^{9\log^* \Delta}\Delta$, where $\log$
denotes the binary logarithm and $\log^*$
denotes the binary iterated logarithm, defined as follows:
\[
    \log^* t = \min\{k\in \mathbb N\hspace{0.2em}|\hspace{0.2em} \underbrace{\log\log\dots\log}_{k \text{ times}} t \le 1\}.
\]

  They also deduced from a result of
\cite{KMMSSUW} that there are graphs with maximum degree $\Delta$ and
local boxicity $\Omega(\Delta/\log \Delta)$. Here we improve these two
bounds as follows. We say that \emph{almost all} graphs of some class
$\mathcal{C}$ satisfy some property $P$ if the proportion of
$n$-vertex graphs of $\mathcal{C}$ satisfying $P$ (among all
$n$-vertex graphs of $\mathcal{C}$) tends to 1 as $n\to \infty$.

\begin{theorem}\label{thm:main}
There are constants $C_1,C_2 > 0$ such that the following holds for every integer $\Delta$.
Every graph of maximum degree $\Delta$ has local boxicity at most
$C_1\Delta$, while almost all graphs of maximum degree $\Delta$ have
local boxicity at least $C_2\Delta$.
\end{theorem}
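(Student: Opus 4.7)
The plan is to build, for any graph $G$ of maximum degree $\Delta$, a $(C_1\Delta)$-local box representation by partitioning the non-edges of $G$ according to graph distance. Non-edges at graph distance $\ge 3$ have disjoint closed neighborhoods and can be witnessed collectively using only $O(1)$ coordinates — for instance, via a suitable interval representation derived from a BFS structure of $G$ (where one takes a boxicity-$2$ representation of a spanning forest and thickens each vertex's intervals to cover its closed neighborhood). The bulk of the work concerns \emph{close} non-edges at graph distance exactly $2$, each of which lies in a common closed neighborhood $N[v]$. The baseline approach, where for each vertex $v$ one uses $\boxi(G[N[v]]) \le \lfloor (\Delta+1)/2\rfloor$ coordinates with bounded intervals only on $N[v]$, correctly witnesses every close non-edge, but gives a local count of $O(\Delta^2)$, since a vertex $u$ lies in $N[v]$ for every $v\in N[u]$. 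To trim off the extra factor of $\Delta$, I would share coordinates across overlapping neighborhoods — e.g.\ by fixing a proper edge coloring of $G$ (or of an auxiliary graph such as $G^2$) with $O(\Delta)$ colors via Vizing's theorem and, within each color class, bundling many per-vertex contributions into a constant number of shared coordinates. I expect this coordinate-sharing step, which must reduce the per-vertex load from $O(\Delta^2)$ to $O(\Delta)$ while still separating every close non-edge, to be the main technical obstacle.

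\textbf{Lower bound.} The plan is a direct counting argument. First I would show that the number of labeled graphs on $[n]$ with $\lbox(G)\le d$ is at most $2^{O(nd\log n)}$. In any minimal $d$-local representation one may assume the ambient dimension $d'$ is at most $\binom{n}{2}$ (each coordinate may be assumed to witness at least one non-edge). Such a representation is then encoded by (i) a choice of at most $d$ active coordinates for each vertex, contributing at most $\binom{d'}{d}^n \le n^{O(nd)}$ configurations, and (ii) for each coordinate $i$, the left-to-right ordering of the endpoints of the $|V_i|$ bounded intervals appearing in it; since $\sum_i |V_i|\le nd$, this contributes at most $2^{O(nd\log n)}$ orderings in total. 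On the other hand, a standard configuration-model estimate shows that there are at least $2^{\Omega(n\Delta\log(n/\Delta))}$ labeled graphs on $[n]$ with maximum degree $\Delta$, and a typical such graph is required (by a union bound over the few $2^{O(nd\log n)}$ graphs of small local boxicity). Comparing exponents yields $d\ge C_2\Delta$ in the regime $\Delta=o(n)$, which is the interesting case; the remaining case $\Delta=\Theta(n)$ is easier and can be handled using existing lower bounds on boxicity.
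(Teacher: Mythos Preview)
Your lower bound plan is essentially the paper's argument: the paper bounds the number of labelled $n$-vertex graphs with $\lbox\le d$ by $2^{O(nd\log n)}$ (\cref{obs:counting}) and compares against the Liebenau--Wormald count of $\Delta$-regular graphs to get \cref{cor:delta}. One caveat: your remark that the case $\Delta=\Theta(n)$ ``can be handled using existing lower bounds on boxicity'' is wrong in general --- local boxicity can be dramatically smaller than boxicity (e.g.\ $K_n$ minus a perfect matching has boxicity $n/2$ and local boxicity $1$), so boxicity lower bounds do not transfer. This case is in any event irrelevant, since for fixed $\Delta$ the statement concerns $n\to\infty$.

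Your upper bound, however, is not a proof but a plan with an explicit hole. You yourself flag the reduction from $O(\Delta^2)$ to $O(\Delta)$ for distance-$2$ non-edges as ``the main technical obstacle'' and offer only a vague hope that Vizing-type edge colorings will allow coordinate sharing; no mechanism is given, and it is not clear one exists along these lines. Your treatment of distance-$\ge 3$ non-edges is also unjustified: taking a boxicity-$2$ representation of a spanning forest and ``thickening'' intervals to cover closed neighborhoods fails because a $G$-neighbor of $v$ can be arbitrarily far from $v$ in the forest, so thickening need not make their boxes intersect while still separating far pairs. The paper's route is entirely different and avoids any distance-based case split: it randomly partitions $V(G)$ into $q^2$ classes (with $q$ a prime, $q^2\approx \Delta/\ln\Delta$), uses the Lov\'asz Local Lemma to guarantee a partition in which every Steiner block of $q$ classes induces a subgraph of maximum degree $\approx\Delta/q$ (\cref{lem:partition}), and then applies \cref{lem:groupslbox} with the affine-plane Steiner system to get the recursion $\lbox(\Delta)\le (q+1)\cdot\lbox\bigl((1+o(1))\Delta/q\bigr)$, which unwinds to $\lbox(\Delta)\le C\Delta$. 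The Steiner-system step is what converts the naive quadratic blowup into a linear one; nothing in your outline plays the analogous role.
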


Using a connection between the  (local) dimension of partially ordered sets
and the (local) boxicity of graphs~\cite{ABC,MM}, \cref{thm:main} directly implies
that partially ordered sets whose comparability graphs have maximum
degree $\Delta$ have local dimension $O(\Delta)$, and this bound is
optimal up to a constant factor (see~\cite{DFGKLNU} for more details and
results on the local dimension of posets).

\medskip

Majumder and
Mathew~\cite{MM}  proved that every graph on $m$ edges has local
boxicity at most $(2^{9\log^* \sqrt{m}}+1)\sqrt{m}$. By adapting their
argument and using Theorem~\ref{thm:main} we improve their bound as
follows.


\begin{theorem}\label{thm:edges}
There is a constant $C_3>0$ such that  every graph with $m$ edges has
local boxicity at most $C_3
\sqrt{m}$. On the other hand, in the regime $m=\Theta(n^2)$, almost
all $n$-vertex graphs with $m$ edges have local boxicity 
$\Omega(\sqrt{m}/\log m)=\Omega(n/\log n)$. 
\end{theorem}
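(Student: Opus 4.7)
For the upper bound, I would adapt the Majumder--Mathew argument, using our improved Theorem~\ref{thm:main} in place of their $2^{9\log^*\!\Delta}\,\Delta$ bound. Set $H = \{v \in V(G) : \deg_G(v) \ge \sqrt{m}\}$; since $\sum_v \deg_G(v) = 2m$, one has $|H| \le 2\sqrt{m}$. The induced subgraph $G_0 := G[V(G)\setminus H]$ has maximum degree at most $\sqrt{m}$, so by Theorem~\ref{thm:main} it admits a $C_1\sqrt{m}$-local box representation in some dimension $d'$; extending it to $V(G)$ by assigning each $h \in H$ the box $\mathbb{R}^{d'}$ yields a representation of a supergraph $\tilde G_0$ of $G$ (in which $H$ is universal) with the same local complexity $C_1\sqrt{m}$. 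To cut the extra edges of $\tilde G_0$, which are all incident to $H$, define for each $h \in H$ the graph $G_h$ obtained from $K_{V(G)}$ by removing the edges $\{h,v\}$ with $v \notin N_G[h]$; then $G_h$ is an interval graph (give $h$ the interval $[0,1]$, each $v\in N_G(h)$ the interval $[0,3]$, and each $v\notin N_G[h]$ the interval $[2,3]$), so $\lbox(G_h) \le 1$. A short case analysis gives $G = \tilde G_0 \cap \bigcap_{h\in H} G_h$, and since concatenating coordinate systems shows that $\lbox$ is subadditive under graph intersection, we obtain
\[
    \lbox(G) \;\le\; \lbox(\tilde G_0) + \sum_{h\in H}\lbox(G_h) \;\le\; C_1\sqrt{m} + 2\sqrt{m} \;=\; O(\sqrt{m}).
\]

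For the lower bound, I would use a counting argument to bound the number of $n$-vertex graphs with local boxicity at most $d$. Fix such a graph together with a $d$-local box representation in some dimension $d'$. A dimension in which at most one vertex has a bounded projection separates no pair of boxes, so after discarding such useless dimensions we may assume $\ell_i := |L_i| \ge 2$ for every $i$, where $L_i \subseteq V(G)$ is the set of vertices local in dimension $i$. Combined with the locality constraint $\sum_i \ell_i \le dn$, this yields $d' \le dn/2$. The representation is then specified by the sets $S_v \subseteq [d']$ of local dimensions of each vertex $v$ (with $|S_v|\le d$) together with, for each $i$, the combinatorial type of the arrangement of the $\ell_i$ local intervals on the real line (at most $(2\ell_i)!$ types). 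Bounding these factors by
\[
    (d'+1)^{dn}\cdot\prod_{i=1}^{d'}(2\ell_i)! \;\le\; 2^{O(dn\log n)},
\]
using $d'\le dn/2$ and $\sum_i \ell_i \log \ell_i \le dn\log n$, we obtain at most $2^{O(dn\log n)}$ labelled $n$-vertex graphs with $\lbox\le d$.

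In the regime $m = \Theta(n^2)$ the number of $n$-vertex graphs with exactly $m$ edges is $\binom{\binom{n}{2}}{m} = 2^{\Theta(n^2)}$, so the fraction with $\lbox \le d$ is at most $2^{O(dn\log n) - \Theta(n^2)} = o(1)$ whenever $d = o(n/\log n)$; hence almost all such graphs satisfy $\lbox(G) = \Omega(n/\log n) = \Omega(\sqrt{m}/\log m)$. The two main obstacles are: on the upper bound side, finding a multiplicative decomposition in which each factor has small local boxicity and the extra all-$\mathbb{R}$ boxes assigned to $H$ do not inflate the other vertices' local complexity; and on the lower bound side, obtaining an a priori bound on the ambient dimension $d'$, which is what the elementary observation about useless dimensions is for.
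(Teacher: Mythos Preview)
Your proposal is correct and follows essentially the same approach as the paper. For the upper bound you unfold exactly what the paper packages as \cref{obs:g-v} (applied $|H|$ times) together with \cref{obs:angle}; for the lower bound your encoding argument is the content of \cref{obs:dn} and \cref{obs:counting} (the paper drops dimensions with zero local vertices to get $d'\le dn$, you drop dimensions with at most one to get $d'\le dn/2$, and the per-vertex versus per-dimension bookkeeping both yield $2^{O(dn\log n)}$), which is then compared to $\binom{\binom{n}{2}}{m}=2^{\Theta(n^2)}$ just as in \cref{cor:edges}.
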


It was proved by Majumder and
Mathew~\cite{MM} that $n$-vertex graphs have local boxicity $O(n/\log n)$, so \cref{thm:edges} implies that almost all $n$-vertex graphs with $m=\Theta(n^2)$ edges have local boxicity 
$\Theta(n/\log n)$.

\medskip

\cref{thm:main} and \cref{thm:edges} will be proved in
Section~\ref{sec:deg}.

\medskip

For a sequence of probability spaces $(\Omega_n, \mathcal F_n, \mathbb P_n)_{n\geq 1}$ and a sequence of events $(A_n)_{n\geq 1}$, where $A_n\in \mathcal F_n$ for every $n\geq 1$, we say that $(A_n)_{n\geq 1}$ happens \textit{asymptotically almost surely} or \textit{a.a.s.}\ if $\underset{n\to +\infty}{\lim}\mathbb P_n(A_n) = 1$.

For any $p\in [0,1]$ and $n\in \mathbb N$,
the random graph $G\in \mathcal G(n,p)$ is defined from the empty
graph on $n$ vertices by adding an edge between each pair of vertices
with probability $p$ and independently from all other pairs.

\medskip

It was proved in \cite{ACS14} that for any $p<1-\tfrac{40 \log n}{n^2}$, $G\in \mathcal G(n,p)$ has boxicity $\Omega(np(1-p))$ a.a.s., and in particular if $p<1-\epsilon$ for some constant $\epsilon>0$, then the boxicity is a.a.s. $\Omega(np)$. Recall that $n$-vertex graphs have local boxicity $O(n/\log n)$, so there is a clear difference between the two parameters in the dense regime (when $p$ is a constant, independent of $n$).

\medskip

For any $\varepsilon > 0$, if $np < 1-\varepsilon$, the random graph $G\in \mathcal G(n,p)$ consists of a set of connected components, each containing at most one cycle a.a.s., and as such has local boxicity at most two (see \cref{lem 2.10}). 
Our next result concerns the local boxicity of the random
graph $\mathcal G(n,p)$ in the sparse regime $np\in [1-\varepsilon, n^{1-\varepsilon}]$. In this regime we prove that the local boxicity is a.a.s. $\Theta(np)$, which in particular implies the lower bound of $\Omega(np)$ on the boxicity of the random graph $G(n,p)$, obtained in \cite{ACS14}.

\begin{theorem}\label{thm:gnp}
Let $\varepsilon\in (0,1)$ and $G\in \mathcal G(n,p)$ with $np\in
[1-\varepsilon, n^{1-\varepsilon}]$. Then there is a constant
$C_4=C_4(\varepsilon)$ such that asymptotically almost surely
\[
    \tfrac{\varepsilon}{41}\cdot np\le \lbox(G)\le C_4 \cdot np.
\]
\end{theorem}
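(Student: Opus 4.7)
For the \textbf{lower bound}, the plan is a counting argument. Any $n$-vertex graph $H$ with $\lbox(H) \le d$ admits a $d$-local box representation in some dimension $d' \le dn$ (without loss of generality). Such a representation is specified by, for each vertex $v$, the set $A(v) \subseteq [d']$ of at most $d$ dimensions where $v$ is local, and for each dimension $i \in [d']$, the co-interval graph induced on $V_i = \{v : i \in A(v)\}$. Since an interval graph on $k$ vertices is determined by the ordering of its $2k$ endpoints, a direct count yields that the number of labeled $n$-vertex graphs with $\lbox \le d$ is at most $2^{Cdn \log n}$ for an absolute constant $C$. On the other hand, $np \le n^{1-\varepsilon}$ gives $\log(1/p) \ge \varepsilon \log n$, and hence the binary entropy $h(p) = -p \log_2 p - (1-p) \log_2(1-p)$ satisfies $h(p) \binom{n}{2} \ge \tfrac{\varepsilon}{2}\, np \cdot n \log n$ for $n$ large enough. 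Combined with the fact that a typical $H \in \mathcal G(n,p)$ has $\Pr[G=H] \approx 2^{-h(p)\binom{n}{2}}$ and standard concentration of $|E(G)|$, a union bound yields $\Pr[\lbox(G) \le d] = o(1)$ as soon as $d = \lfloor \tfrac{\varepsilon}{41}\, np \rfloor$; the factor $\tfrac{1}{41}$ arises from tracking the explicit value of $C$ in the counting bound.

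For the \textbf{upper bound}, the argument splits according to the size of $np$ compared to $\log n$. When $np = \Omega(\log n)$, a Chernoff bound gives $\Delta(G) \le (1+o(1)) np$ asymptotically almost surely, and \cref{thm:main} immediately yields $\lbox(G) = O(np)$. In the sparser range $np \in [1-\varepsilon, O(\log n)]$, the maximum degree may exceed $np$ by a $\log n/\log\log n$ factor, so a direct appeal to \cref{thm:main} is insufficient. Here one exploits the fact that $\mathcal G(n,p)$ has chromatic number (and degeneracy) of order $np$ asymptotically almost surely, combined with a strengthening of the construction behind \cref{thm:main} that gives $\lbox(G) = O(\chi(G))$ rather than $O(\Delta(G))$. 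Such a strengthening can be obtained by revisiting the proof of \cref{thm:main}, since $\chi \le \Delta + 1$ is essentially the only consequence of the maximum-degree hypothesis used there.

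The main obstacle is the upper bound in the sparse regime, where one has to bridge the gap between $\Delta(G)$ and $np$ by invoking a structurally smaller parameter (the chromatic number or the degeneracy) rather than the maximum degree. A secondary, more mechanical challenge is keeping the constants in the counting argument tight enough to obtain the explicit factor $\tfrac{1}{41}$ in the lower bound.
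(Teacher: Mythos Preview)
Your lower bound and your dense-regime upper bound ($np \gg \log n$) match the paper's approach: a counting argument against \cref{obs:counting}, and a direct appeal to \cref{thm:main} via Chernoff on the maximum degree.

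The sparse-regime upper bound, however, has a genuine gap. The claimed strengthening $\lbox(G) = O(\chi(G))$ is \emph{false}: by \cref{cor:delta}, almost all $\Delta$-regular graphs (with $\Delta = O(n^{1-\epsilon})$) have $\lbox \ge c\Delta$, whereas random $\Delta$-regular graphs are known to have $\chi = (1+o(1))\tfrac{\Delta}{2\ln\Delta}$, so $\lbox/\chi = \Omega(\log\Delta)$ on such graphs. Your justification --- that ``$\chi \le \Delta+1$ is essentially the only consequence of the maximum-degree hypothesis'' in the proof of \cref{thm:main} --- is also incorrect. The proof of \cref{lem:partition} uses the maximum degree in two essential ways: to apply Chernoff to the number of neighbours of a fixed vertex landing in a block, and to bound the dependency degree in the Local Lemma by $O(\Delta^2)$. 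Neither step is available under a chromatic-number hypothesis alone. (Incidentally, $\chi(G(n,p))$ is \emph{not} of order $np$ in the regime $np\to\infty$, $np = O(\log^2 n)$; it is $\Theta(np/\log(np))$.)

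The paper handles the sparse regime $np \in [1-\varepsilon, \ln^2 n]$ by a completely different, direct argument: partition $V(G)$ uniformly at random into $s = \lceil 2(1+\varepsilon)np\rceil$ parts, so that the union of any two parts induces a subcritical $\mathcal G(n',p)$ with $n'p < 1$. By \cref{lem 2.10}, a.a.s.\ no such pairwise union contains a multicyclic component, hence each has boxicity at most $2$. Then \cref{lem:groupslbox} with $k=2$ gives $\lbox(G) \le 2(s-1) = O(np)$.
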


The proof of \cref{thm:gnp} will be given in Section~\ref{sec:RG}.

\medskip

It was proved by Thomassen in 1986 that planar graphs have boxicity
at most 3~\cite{Tho}, which is best possible (as shown by the planar
graph
obtained from a complete graph on 6 vertices by removing a perfect
matching~\cite{Rob}). It is natural to
investigate how this result on planar graphs extends to graphs
embeddable on surfaces of higher genus. It was proved in~\cite{SW}
that graphs embeddable on surfaces of Euler genus $g$ have boxicity
$O(\sqrt{g \log g})$, which is best possible~\cite{Esp2}.
We prove the following version for local boxicity.

\begin{theorem}\label{thm:genus}
For every integer $g\ge 0$, every graph embeddable on a surface of
Euler genus at most $g$ has local boxicity at most $(70+o_g(1))\sqrt{g}$. On
the other hand, there exist a constant $C_5>0$ (independent of $g$) such that some of these graphs have boxicity at least $C_5\sqrt{g}/\log g$.
\end{theorem}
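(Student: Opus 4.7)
The proof splits into two independent parts: the lower bound and the upper bound.

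\emph{Lower bound.} The plan is to apply the lower bound of Theorem~\ref{thm:edges} to a dense random graph. Let $n = \lfloor \sqrt{6g}\rfloor$ and consider $G \in \mathcal G(n,1/2)$. Since $G$ is a subgraph of $K_n$, its Euler genus is at most that of $K_n$, which is $\lceil (n-3)(n-4)/6\rceil \le g$ for this choice of $n$. A.a.s., $G$ has $m = \Theta(n^2) = \Theta(g)$ edges, and by the lower bound in Theorem~\ref{thm:edges}, a.a.s.\ $\lbox(G) = \Omega(\sqrt m/\log m) = \Omega(\sqrt g/\log g)$. Since $\boxi(G) \ge \lbox(G)$, the same lower bound holds for $\boxi(G)$, yielding the claim for an appropriate constant $C_5$.

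\emph{Upper bound.} The starting point is Euler's formula: every graph of Euler genus at most $g$ satisfies $|E| \le 3(|V| + g - 2)$, and this inequality extends to every subgraph since subgraphs embed on the same surface. A standard computation then shows that such graphs have degeneracy at most $\sqrt{6g} + O(1)$: on $k \ge \sqrt{6g}$ vertices the minimum degree is at most $6 + 6g/k = O(\sqrt g)$, while for $k < \sqrt{6g}$ the minimum degree is trivially less than $\sqrt{6g}$.

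Given this, the plan is to prove a refinement of Theorem~\ref{thm:main}, replacing the maximum degree by the degeneracy: namely, $\lbox(G) \le C\cdot d(G)$ for some absolute constant $C$ with $C\sqrt 6 \le 70$. This should be obtained by processing vertices in a degeneracy ordering and adapting the probabilistic construction behind Theorem~\ref{thm:main}: the crucial quantity in that argument is the number of neighbors ``already present'' when a vertex is added, which is exactly the back-degree in a degeneracy order. Substituting the degeneracy bound then yields $\lbox(G) \le C\sqrt{6g} + O(1) = (70+o_g(1))\sqrt g$.

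The principal technical obstacle is achieving the explicit constant $70$. Adapting Theorem~\ref{thm:main} to a degeneracy ordering is conceptually natural, but maintaining a small multiplicative constant—while keeping the representation fully local, so that no vertex accumulates more than $O(d)$ bounded intervals during the incremental construction—will require a careful quantitative analysis.
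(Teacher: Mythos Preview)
Your lower bound argument is correct and essentially the same as the paper's: both routes ultimately rest on the counting estimate of \cref{obs:counting} (the paper packages this as \cref{cor:genus}; you reach it through the lower bound in \cref{thm:edges} applied to $G(n,1/2)$).

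The upper bound, however, has a genuine gap. You propose to show $\lbox(G)\le C\cdot d(G)$ for the degeneracy $d(G)$, by ``processing vertices in a degeneracy ordering and adapting the probabilistic construction behind Theorem~\ref{thm:main}''. But that construction has no incremental structure to adapt: it colours $V(G)$ randomly with $q^2$ colours and applies the Lov\'asz Local Lemma to the events $A_{j,v}$ of \cref{lem:partition}. The dependency degree there is bounded by $r(1+\Delta+\Delta^2)$ because $A_{j,v}$ depends on the colours of all vertices within distance~$2$ of $v$; in a $d$-degenerate graph the second neighbourhood can be arbitrarily large (already $K_{1,n}$ is $1$-degenerate), so this step collapses. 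The recursion in \cref{cor:lboxdelta} likewise reduces the \emph{maximum degree} of each piece, not its degeneracy. In short, a linear bound on $\lbox$ in terms of degeneracy is not established anywhere in the paper, your sketch does not prove it, and you have identified the wrong obstacle (the constant~$70$ rather than the bound itself).

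The paper's route is entirely different. It reuses the Scott--Wood decomposition for the boxicity of bounded-genus graphs: extract $X\subseteq V(G)$ with $|X|\le 60g$ and $\boxi(G\setminus X)\le 5$, write $G=G_1\cap G_2\cap G_3$, and for the hard piece $G_3=G\langle X\cup Z\rangle$ further split $X\cup Z=A\cup B$ with $|B|\le 6g/(k-6)$. The single modification to~\cite{SW} is to replace the trivial bound $\boxi(H[B])\le |B|$ by the Majumder--Mathew bound $\lbox(H[B])\le 24\,|B|/\log|B|$ (\cref{thm:MM}); choosing $k\approx 4\sqrt g/\ln g$ then yields $(70+o(1))\sqrt g$. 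The improvement from $\sqrt{g\log g}$ to $\sqrt g$ thus comes from the $n/\log n$ local-boxicity bound for arbitrary $n$-vertex graphs, not from any degeneracy consideration.
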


\cref{thm:genus} will be proved in Section~\ref{sec:genus}. The
$\emph{girth}$ of a graph $G$ is the length of a smallest cycle in $G$
(if $G$ is acyclic, it girth is infinite). We now turn to graphs $G$
whose complement, denoted by $G^c$, has girth at least 5. In this
class of very dense graphs we can determine the local boxicity fairly
accurately. Given a graph $G=(V,E)$, the \emph{average degree} of $G$,
denoted by $\ad(G)$, is defined as $2|E|/|V|$.

\begin{theorem}\label{thm:avgdeg}
If $G$ is a graph such that $G^c$ has girth at least five, then $\lbox(G)\ge \left\lfloor\tfrac{\ad(G^c)}{2} + 1\right\rfloor$.
\end{theorem}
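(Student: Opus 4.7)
The plan is to fix a $d$-local box representation of $G$ with $d = \lbox(G)$ and, for each dimension $i$, analyze the graph $H_i$ whose edges are the non-edges of $G$ separated by dimension $i$ (i.e., non-edges $uv$ such that the projections of $B_u$ and $B_v$ on dimension $i$ are disjoint). Writing $L_i$ for the set of vertices whose box is local in dimension $i$, any dimension $i$ separating a non-edge $uv$ must have $u, v \in L_i$, since a projection equal to $\mathbb{R}$ intersects every other projection; hence $H_i$ is a subgraph of $G^c$ with all edges inside $L_i$, and every non-edge contributes to at least one $H_i$, so $|E(G^c)| \le \sum_i |H_i|$. The heart of the argument is a structural lemma: each $H_i$ is a forest.

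To prove the forest lemma, I would assume towards a contradiction that $H_i$ contains a cycle $v_1 v_2 \cdots v_k v_1$ with $k \ge 5$ (the cases $k\in\{3,4\}$ are excluded because $H_i \subseteq G^c$ has girth at least~$5$), and write $I_j$ for the bounded projection of $B_{v_j}$ on dimension $i$. The girth hypothesis forces every chord of the cycle at cyclic distance $2$ or $3$ to lie in $G$---otherwise it would create a $C_3$ or $C_4$ in $G^c$---so the corresponding projections intersect; in particular $I_1 \cap I_3$, $I_1 \cap I_4$ and $I_2 \cap I_4$ are all non-empty. Relabeling so that $I_1$ has the smallest right endpoint $r_1$ among the cycle intervals, the disjointness $I_1\cap I_2=\emptyset$ together with $\max I_2 \ge r_1$ places $I_2$ strictly to the right of $r_1$; the intersection $I_1\cap I_3 \neq \emptyset$ combined with $\max I_3 \ge r_1$ forces $r_1 \in I_3$; and the intersections $I_1\cap I_4\neq\emptyset$ and $I_2\cap I_4\neq\emptyset$ force $\min I_4 \le r_1$ and $\max I_4 > r_1$, so $r_1 \in I_4$ as well. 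This contradicts the cycle edge $v_3 v_4$, which requires $I_3\cap I_4 = \emptyset$.

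With the forest lemma, the conclusion is a routine double count: after discarding any empty $L_i$'s, every remaining $H_i$ satisfies $|H_i| \le |L_i| - 1$, so
\[
|E(G^c)| \;\le\; \sum_i |H_i| \;\le\; \sum_i(|L_i|-1) \;=\; \sum_i |L_i| - d' \;\le\; dn - d',
\]
where $d'$ is the number of contributing dimensions and the last inequality uses $|\{i : v \in L_i\}| \le d$ for each vertex $v$. Provided $G$ has at least one non-edge, $d' \ge 1$, so $dn > |E(G^c)|$, i.e.\ $d > \ad(G^c)/2$; the integrality of $d$ then yields $d \ge \lfloor \ad(G^c)/2 \rfloor + 1 = \lfloor \ad(G^c)/2 + 1 \rfloor$. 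The main obstacle is the forest lemma itself: girth at least~$5$ on its own would only give $|E(H_i)| = O(|L_i|^{3/2})$, so the sharper linear bound really uses the interval structure along dimension $i$ via the point $r_1$ pinned inside both $I_3$ and $I_4$.
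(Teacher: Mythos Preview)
Your proof is correct and follows the same overall strategy as the paper: establish that the co-interval subgraph $H_i$ attached to each dimension is a forest, then count edges via $|E(H_i)|\le |L_i|-1$ to get $|E(G^c)|<dn$. The paper derives the forest property from its Observation~6.1, observing that $P_4^c$ and $C_5^c$ are not chordal (hence not interval), so a co-interval graph of girth $\ge 5$ contains no induced $P_4$ or $C_5$ and is therefore a forest whose components have diameter at most~$3$. Your direct interval argument---choosing $I_1$ with minimal right endpoint $r_1$ and pinning $r_1$ inside both $I_3$ and $I_4$---is a self-contained alternative that yields exactly the forest statement needed here without invoking chordality or forbidden induced subgraphs; the diameter-$3$ refinement you skip is not used in this theorem (the paper only exploits it later, for the odd-regular case of Theorem~1.6). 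The final counting step is also phrased differently (the paper orients each tree toward a root and bounds out-degrees), but this is equivalent to your forest edge bound.
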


In the special case of regular graphs, we give a more precise bound.

\begin{theorem}\label{thm:gcreg}
Let $G$ be a graph and let $k\in \mathbb N$. Suppose that $G^c$ is $k$-regular and has girth at least five. Then,
\begin{itemize}
    \item if $k$ is even, or if $k$ is odd and $G^c$ contains a perfect matching, then $\lbox(G) = \left\lfloor\tfrac{k}{2}+1\right\rfloor$, and
    \item if $k$ is odd and $G^c$ does not contain a perfect matching, then $\lbox(G) = \tfrac{k+3}{2}$.
\end{itemize}
\end{theorem}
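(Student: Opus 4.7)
The plan is to prove matching upper and lower bounds in all three cases. For both the even case and the odd-with-PM case, the lower bound $\lfloor k/2+1 \rfloor$ follows immediately from Theorem~\ref{thm:avgdeg} applied with $\ad(G^c) = k$; all three cases still require explicit upper-bound constructions, and the odd-without-PM case needs an extra ``$+1$'' on the lower bound.

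For the upper bound when $k$ is even, I would use an Eulerian orientation of $G^c$: for each vertex $v$ introduce a dimension $d_v$ in which $v$ is placed at the point $[0,0]$, every out-neighbour of $v$ in $G^c$ is placed at $[1,2]$, and every other vertex gets the interval $\mathbb{R}$. Since $G^c$ is triangle-free, $N_{G^c}(v)$ is independent in $G^c$, so its elements correctly overlap pairwise in $d_v$; the local count at $w$ is $1+d^-(w) = k/2+1$. For odd $k$ with a perfect matching $M$ of $G^c$, I remove $M$ to obtain a $(k-1)$-regular graph $G^c-M$ with $k-1$ even, and Eulerian-orient it. For each edge $uv\in M$, I introduce one dimension $d_{uv}$ in which $u$ is placed at $[0,0]$, $v$ at $[2,2]$, the out-neighbours of $u$ in $G^c-M$ at $[1,3]$, and the out-neighbours of $v$ at $[-1,1]$. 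The point is that girth $\geq 5$ forces $N_{G^c}(u)\cap N_{G^c}(v)=\emptyset$ and rules out a $C_4$ through $uv$, which prevents any unwanted separation between $u$ and $N^+(v)$, between $v$ and $N^+(u)$, or between $N^+(u)$ and $N^+(v)$. The local count at $w$ becomes $1+(k-1)/2 = (k+1)/2$: it counts the dimension $d_{w\,p(w)}$ (where $p(w)$ denotes $w$'s PM-partner) plus one dimension $d_{x\,p(x)}$ for each in-neighbour $x$ of $w$ in $G^c-M$. In the odd-without-PM case, a balanced orientation of $G^c$ with $d^-(v)\leq (k+1)/2$ combined with the same ``star per vertex'' construction yields $\lbox(G)\leq 1 + (k+1)/2 = (k+3)/2$.

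The only remaining task is the lower bound $\lbox(G) \geq (k+3)/2$ when $k$ is odd and $G^c$ has no perfect matching. I would argue by contradiction: assume $\lbox(G) = (k+1)/2$, and examine the separating graphs $H_j$, defined by $uv \in E(H_j)$ iff $u$ and $v$ are separated in dimension $j$. Since $H_j\subseteq G^c$ has girth $\geq 5$ and its complement is an interval graph, in particular $H_j$ is $2K_2$-free, and it follows that each connected component of $H_j$ is a double star $DS(a,b)$ (with single edges and stars arising as degenerate cases). Using $\sum_j |E(H_j)| \geq |E(G^c)| = nk/2$ combined with the extremality of Theorem~\ref{thm:avgdeg}, the $H_j$'s must in fact partition $E(G^c)$; since each $H_j$ is a tree and $\sum_j |V(H_j)| = n(k+1)/2$, the total number of dimensions is exactly $n/2$. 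Then a careful analysis of the degree sequence $(f_i(v))_{i\geq 1}$ at each vertex, where $f_i(v)$ counts the dimensions in which $v$ has degree $i$ in the corresponding $H_j$ and satisfies $\sum_i f_i(v) = (k+1)/2$ and $\sum_i i\,f_i(v) = k$, should identify a canonical ``spine'' edge in each $H_j$ lying in $E(G^c)$ such that the collection of spine edges forms a perfect matching in $G^c$, contradicting our hypothesis.

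The hardest step is this last extraction of a perfect matching from the spines, because a priori the $H_j$'s may have quite different shapes (single edges, stars of varying sizes, asymmetric double stars). I expect it to follow from the tight inequalities $\sum_v f_{\geq 2}(v) \leq n(k-1)/2$ and $\sum_v f_1(v) \geq n$, both of which must be equalities in our extremal setting, forcing each vertex to appear as an internal vertex (degree $\geq 2$) of exactly one $H_j$; this unique internal appearance, paired with the other internal vertex of the same $H_j$ via the spine edge of that double star, produces the required perfect matching in $G^c$.
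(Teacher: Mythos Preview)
Your upper bound constructions are correct and coincide with the paper's: both use an Eulerian orientation of $G^c$ (or of $G^c-M$) and cover $E(G^c)$ by stars or double stars, the only difference being that you spell out explicit interval coordinates while the paper simply invokes \cref{obs:g5} to say that each such tree is a co-interval graph. Your lower bound plan for the odd-without-PM case is also on the right track and reaches the same endpoint as the paper --- extracting a perfect matching from the ``spine'' edges of a decomposition of $E(G^c)$ into trees of diameter at most~3 --- but the specific inequalities you wrote down are off.

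You claim $\sum_v f_{\ge 2}(v)\le n(k-1)/2$ and $\sum_v f_1(v)\ge n$ with equality. For $k\ge 5$ the first equality is impossible: one actually has $\sum_v f_{\ge 2}(v)\le 2|\mathcal T|\le n$, since a tree of diameter $\le 3$ has at most two non-leaf vertices, and $|\mathcal T|=\sum_T|V(T)|-\tfrac{nk}{2}\le \tfrac{n(k+1)}{2}-\tfrac{nk}{2}=\tfrac n2$. The matching \emph{lower} bound is $\sum_v f_{\ge 2}(v)\ge n$, because $f_{\ge 2}(v)\ge 1$ for every $v$ (if $v$ were a leaf in all its trees, its $k$ incident edges would lie in $k$ distinct trees, contradicting $k>(k+1)/2$). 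The correct chain is therefore
\[
n \;\le\; \sum_v f_{\ge 2}(v)\;\le\; 2|\mathcal T|\;\le\; n,
\]
forcing $f_{\ge 2}(v)=1$ for every $v$ and every tree to be a genuine double star; pairing each vertex with the other centre of its unique double star then gives the perfect matching, exactly as you intended. (You should also justify that the $H_j$'s may be taken to \emph{partition} $E(G^c)$: this is not ``forced by extremality'' but simply obtained by deleting redundant edges, which never increases the participation count.)

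For comparison, the paper proves the same lower bound by orienting each tree towards a non-leaf root, setting $D^+=\{v:d^+(v)=(k+1)/2\}$, and showing via the identity $\sum_v(d^+(v)-d^-(v))=0$ that $|D^+|\ge n/2$, that $|\mathcal T|\le |D^+|$ (roots lie in $D^-$), and that $|\mathcal T|\ge |D^+|$ (each $v\in D^+$ is the unique non-root internal vertex of some tree). Equality then yields a bijection $T\mapsto u_T$ with the edges $r_Tu_T$ forming a perfect matching. This is the orientation-theoretic cousin of your degree-count; both arguments hinge on the fact that diameter-$\le 3$ trees have at most two internal vertices.
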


A classical result is that the graph obtained from $K_{n}$ ($n$ even) by removing a perfect matching has boxicity $n/2$~\cite{Rob}, while \cref{thm:gcreg} shows that the same graph has local boxicity 1. More generally, if $G$ is a non-complete regular graph on $n$ vertices, whose complement has girth at least 5, then $G$ has boxicity at least $n/4$~\cite{ACS14}, while \cref{thm:gcreg} shows that the local boxicity of $G$ can be very low. This shows a major difference between boxicity and local boxicity.
Note that regular graphs of odd degree, girth at least five and no
perfect matching exist, see \cite{CDR} and \cite{FH}. This shows that
the second case in~\cref{thm:gcreg} cannot be neglected in our
analysis. \cref{thm:avgdeg} and \cref{thm:gcreg} will be proved in Section~\ref{sec:gc}.

\medskip

We conclude the paper with a study of some connections between local
boxicity and graph coloring. We recall that the chromatic number
$\chi(G)$ of a graph $G$ (the minimum number of colors in a proper
coloring of $G$) is at least the \emph{clique number} $\omega(G)$ of
$G$ (the maximum number of pairwise adjacent vertices in $G$). It is
well known that there exist graphs with bounded clique number and arbitrary
large chromatic number, and classes that do not contain such graphs are fundamental objects of study. We say that a class $\mathcal{C}$ is
\emph{$\chi$-bounded} if there is a function $f$ such that for any graph
$G\in \mathcal{C}$, $\chi(G)\le f(\omega(G))$ (see~\cite{SSchi} for a
recent survey on $\chi$-boundedness).

It is well known that the class of graphs of boxicity
at most 2 (also known as \emph{rectangle graphs}) is
$\chi$-bounded~\cite{CW,AG,Hen}, while there are triangle-free graphs
of boxicity 3 and unbounded chromatic number~\cite{Bur}.
Since the class of graphs of boxicity at most 2 is contained in the
class of graphs of local boxicity at most 2, a natural question is
whether this larger class is still $\chi$-bounded. We prove that this
is indeed the case (as the asymptotic complexity of our
$\chi$-bounding function is certainly far from optimal, we make no
effort to optimize the multiplicative constant 320).

\begin{theorem}\label{thm:col}
Let $G$ be a graph of local boxicity at most 2 and let
$r=\omega(G)$. Then, $\chi(G)\le 320\cdot r^3\log (2r)$.
\end{theorem}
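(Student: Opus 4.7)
The plan is to find a set $C$ of $O(r)$ ``cover'' dimensions, use disjoint color palettes to reduce the problem to coloring each ``single-dimension'' subgraph $G[V_k^*]$ separately, and then to bound each such $\chi(G[V_k^*])$ by $O(r^2\log r)$ via a sweep argument combined with the classical Asplund--Gr{\"u}nbaum bound for rectangle graphs.

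First, I would fix a $2$-local box representation of $G$ in some dimension $d'$, and attach to each vertex $v$ the set $L_v\subseteq\{1,\dots,d'\}$ of dimensions in which $B_v$ is bounded (so $|L_v|\le 2$), writing $I_v^k$ for the $k$-projection of $B_v$ when $k\in L_v$. The key structural remark is that $u,v$ are non-adjacent in $G$ if and only if some $k\in L_u\cap L_v$ satisfies $I_u^k\cap I_v^k=\emptyset$; in particular $L_u\cap L_v=\emptyset$ forces $u\sim v$, so any family of vertices with pairwise disjoint $L_v$'s forms a clique of $G$ of size at most $r$. Viewing the non-empty $L_v$'s as edges (or loops) of a multigraph on $\{1,\dots,d'\}$, a maximum matching therefore has size at most $r$, so its $\le 2r$ endpoints form a set $C$ hitting every non-empty $L_v$. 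The clique $V_\emptyset:=\{v:L_v=\emptyset\}$ has size at most $r$ and I would color it with $r$ dedicated colors; for every other $v$ I would pick some $k(v)\in L_v\cap C$ and assign $v$ to the group $V^{(k(v))}\subseteq V_k^*:=\{v:k\in L_v\}$. Coloring each group with its own disjoint palette yields
\[
    \chi(G)\le r + 2r\cdot \max_{k\in C}\chi(G[V_k^*]),
\]
so it suffices to prove $\chi(G[V_k^*])\le 160\cdot r^2\log(2r)$ for each $k\in C$.

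For this main technical bound, write $V_{k,\ell}:=\{v:L_v=\{k,\ell\}\}$ for $\ell\ne k$ and $V_{k,k}:=\{v:L_v=\{k\}\}$. The graph $G[V_k^*]$ is a sub-interval-graph in dimension $k$ with an additional $\ell$-overlap requirement for pairs inside the same $V_{k,\ell}$. The key structural fact I would exploit is that in any ``$k$-slice'' $S_p:=\{v\in V_k^*:p\in I_v^k\}$, the induced subgraph $G[S_p]$ is the join of the clique $V_{k,k}\cap S_p$ with interval graphs (in dimension $\ell$) on each $V_{k,\ell}\cap S_p$; hence $G[S_p]$ is perfect with $\chi(G[S_p])\le \omega(G[S_p])\le r$, and at most $r$ distinct secondary dimensions $\ell$ can appear in $S_p$. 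Combined with the Asplund--Gr{\"u}nbaum bound $\chi(\text{rectangle graph})=O(r^2)$ applied to each $V_{k,\ell}$, an iterative sweep along dimension $k$ should coordinate these rectangle-graph colorings across secondary dimensions, with an extra $O(\log r)$ factor from the iteration yielding the target bound $O(r^2\log r)$.

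The main obstacle will be executing this coordination: the per-class Asplund--Gr{\"u}nbaum colorings must be merged into a single coloring of $G[V_k^*]$ that also respects the $k$-overlap constraint between vertices with \emph{different} secondary dimensions. The slice-wise bounds of $r$ (on both the chromatic number and the number of secondary dimensions present) should make it possible to reuse a bounded palette over sufficiently distant parts of the $k$-axis, with the logarithmic factor emerging from a peeling argument that halves some ``secondary-diversity'' measure at each stage.
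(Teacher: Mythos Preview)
Your reduction to the ``type $(1,1)$'' subproblem via the hitting-set argument is correct and is in fact cleaner than the paper's. The paper instead argues by induction on $r$: it picks a vertex $v$ local in two dimensions, observes that $V\setminus(D_1\cup D_2)\subseteq N(v)$ has clique number at most $r-1$ (handled by induction), and covers the rest by the two type-$(1,1)$ graphs $G[D_1]$ and $G[D_2\setminus D_1]$. Your matching argument (pairwise disjoint $L_v$'s form a clique, hence a maximal disjoint family has size $\le r$ and its union is a hitting set $C$ of size $\le 2r$) reaches the same subproblem in one shot, without induction. Both routes reduce the theorem to the bound $\chi(G[V_k^*])=O(r^2\log r)$ for graphs in which every vertex is local in one common dimension~$k$.

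The gap is precisely this bound: your ``sweep/peeling'' argument is not an argument yet. You correctly note that each $k$-slice $S_p$ is a join of interval graphs (hence perfect with $\chi\le r$) and that at most $r$ secondary dimensions appear in any slice, but you give no mechanism to combine the per-$V_{k,\ell}$ rectangle-graph colorings across different $\ell$'s while respecting adjacencies forced by overlap in dimension $k$. The hoped-for ``extra $O(\log r)$ factor from the iteration'' has no content; nothing is being halved. Moreover, if you only invoke Asplund--Gr\"unbaum ($O(r^2)$) for the rectangle pieces, then even an optimal merge costing a factor~$r$ would give $O(r^3)$ for $G[V_k^*]$ and $O(r^4)$ overall.

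The paper's Lemma~7.1 fills exactly this hole, and it uses the very slice observation you made. For each secondary dimension $\ell$, take the connected components of the \emph{interval supergraph} $G_1$ (in dimension $k$) restricted to $V_{k,\ell}$; each such component $S$ spans a genuine rectangle graph $G[S]$ (dimensions $k$ and $\ell$), colorable with $320r\log(2r)$ colors by Chalermsook--Walczak. The union-intervals $I_S=\bigcup_{v\in S}I_v^k$ define an interval graph $H$ on these components, and your ``at most $r$ secondary dimensions per slice'' observation is exactly the statement $\omega(H)\le r$, hence $\chi(H)\le r$. The product coloring then gives $\chi(G[V_k^*])\le r\cdot 320r\log(2r)$. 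Plugging this into your hitting-set reduction yields the desired $O(r^3\log r)$ bound immediately, with no sweep and no peeling.
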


In addition, we give an improved upper bound in the case of
triangle-free graphs, which is an analogue of a  theorem of Asplund
and Gr\"unbaum \cite{AG} for boxicity (although we do not prove that our bound is sharp).

\begin{theorem}\label{thm:col18}
The chromatic number of any triangle-free graph $G = (V,E)$ with local boxicity at most two is at most 18.
\end{theorem}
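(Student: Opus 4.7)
The plan is to fix a 2-local box representation of $G$ in some ambient dimension $d'$, and analyze the structure imposed by triangle-freeness on the local dimension sets $I_v \subseteq [d']$ (the at most two coordinates where $B_v$ has a bounded projection). The starting observation is: whenever $I_u\cap I_v=\emptyset$, the boxes $B_u$ and $B_v$ intersect automatically in every coordinate, so $uv$ is an edge; consequently, three vertices with pairwise disjoint local sets always form a triangle, which is forbidden. In particular, any vertex $v_0$ with $I_{v_0}=\emptyset$ is universal, and $G\setminus\{v_0\}$ is then independent, giving $\chi(G)\leq 2$; so we may assume $|I_v|\geq 1$ for all $v$. The same observation, applied to the family $\{I_v\}_v$ viewed as a collection of $1$- or $2$-element subsets of $[d']$, forbids three pairwise disjoint members; taking the union of a maximum pairwise-disjoint subfamily yields at most four \emph{core} dimensions $d_1,\ldots,d_4$ covering every $I_v$.

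Based on these core dimensions I would partition $V$ into at most four classes $V^{(1)},\ldots,V^{(4)}$ by assigning each $v$ to the smallest $d_j\in I_v$. Within $V^{(j)}$, every vertex is bounded in dimension $d_j$ and possibly one other (\emph{secondary}) dimension $k_v$. Partitioning further by $k_v$ (using a special label $*$ for $|I_v|=1$) exhibits $V^{(j)}$ as a union of slices, each slice being either a triangle-free 2D rectangle graph (when $k_v=k$ for a dimension $k$) or a triangle-free interval graph (the slice $*$). Each slice admits a coloring with a bounded number of colors: trivially for interval graphs, and by the theorem of Asplund and Gr\"unbaum for rectangle graphs. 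Adjacency between vertices in different slices of $V^{(j)}$ depends only on their $d_j$-intervals, and triangle-freeness combined with $1$D Helly on these intervals forces that at every point of the $d_j$-axis at most two distinct secondary labels are active---three vertices with pairwise overlapping $d_j$-intervals and pairwise distinct secondary labels would form a triangle.

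The technical heart of the proof is to combine the slice-level colorings inside each $V^{(j)}$ into a coloring with a small constant number of colors. I would use the depth-$2$ structure on secondary labels to bundle them into a few meta-groups such that labels in the same meta-group are pairwise non-overlapping on the $d_j$-axis (hence induce a disjoint union of triangle-free slices); combining the per-slice Asplund--Gr\"unbaum colorings within each meta-group, with disjoint palettes across meta-groups, yields a bound on $\chi(G[V^{(j)}])$. Summing contributions across $V^{(1)},\ldots,V^{(4)}$ would then produce the final bound of $18$. The main obstacle is to control the overlap graph on the secondary labels on the $d_j$-axis: since each secondary label corresponds to a union of intervals rather than a single interval, depth $2$ alone does not make this graph perfect (it may contain triangles and short odd cycles), so one must use the $1$D Helly constraint and the fine structure of the rectangle slices to keep the constants small enough for the target bound $18$.
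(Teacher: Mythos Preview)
Your structural setup is sound and shares its starting point with the paper: the key observation $I_u\cap I_v=\emptyset\Rightarrow uv\in E$, and the resulting ``core dimensions'' hitting every $I_v$, are exactly how the paper begins.

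There are two gaps. The obstacle you flag at the end has a clean resolution you are missing: instead of grouping $V^{(j)}$ by secondary label, refine each label into the connected components of the interval graph on the $d_j$-axis restricted to that label. Each component $S$ then corresponds to a \emph{single} interval $I_S$, and the interval graph on the family $(I_S)_S$ has clique number at most $2$: three pairwise-overlapping $I_S$'s share (by $1$D Helly) a common point $p$, giving vertices $v_i\in S_i$ with $p$ in their $d_j$-intervals; the three $S_i$ must lie in distinct labels (two components of the same label have disjoint $I_S$'s by definition of component), so $v_1,v_2,v_3$ form a triangle in $G$. This $2$-colouring of the components, combined with Asplund--Gr\"unbaum inside each $G[S]$, gives $\chi(G[V^{(j)}])\le 2\cdot 6=12$. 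This is precisely the paper's lemma on what it calls ``type $(1,1)$'' graphs.

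The more serious gap is arithmetic. Summing $12$ over four pieces gives $48$, not $18$. You can sharpen to $25$ by noting that $V^{(3)}\cup V^{(4)}$ is an independent set (every such vertex has $I_v$ disjoint from the core pair $I_a=\{d_1,d_2\}$, hence is adjacent to $a$), but this still does not close. The paper reaches $18$ through a case analysis your outline does not contain: writing $D_i=\{v:i\in I_v\}$, one first shows $\chi\big(G[V\setminus(D_1\triangle D_2)]\big)\le 6$ (this part is $D_1\cap D_2$, a genuine boxicity-$2$ piece, together with the independent set $V\setminus(D_1\cup D_2)$); then, picking an edge $uw$ inside $D_1\setminus D_2$ and examining the secondary dimensions of $u$ and $w$, one proves that either $D_1\triangle D_2\subseteq D_3$ for a single further dimension (one type-$(1,1)$ piece, $12$ colours, total $12+6=18$) or $D_1\triangle D_2$ splits into four independent sets (total $4+6=10$). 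It is this structural dichotomy, not a straight sum over the $V^{(j)}$, that produces the constant $18$.
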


\cref{thm:col} and \cref{thm:col18} are proved in
Section~\ref{sec:col}.

\section{Preliminaries}

\subsection{Boxicity and local boxicity}\label{sec:prelbox}

The \emph{intersection} of $d$ graphs $G_i=(V,E_i)$ ($1\le i \le d$) on the
same vertex $V$ is the graph $G=(V,E_1\cap\dots \cap E_d)$. As
observed by Roberts~\cite{Rob},
a graph $G$ has boxicity at most $d$ if and only if $G$ is the
intersection of at most $d$ interval graphs. In one direction, this can be seen by
projecting a $d$-box representation of $G$ on each of the $d$
dimensions, and in the other direction it suffices to take the
cartesian product of the $d$ interval graphs.
Equivalently, the edge-set of $G^c$ (the complement of $G$) can be
covered by at most $d$ \emph{co-interval graphs} (complements of
interval graphs).

Similar alternative definitions exist for the local boxicity. Consider
an interval graph $G$ and an interval representation of $G$ (in which
we allow intervals to be either bounded real intervals or equal to
$\mathbb{R}$). Note that if some vertex $v$ is mapped to $\mathbb{R}$,
then $v$ is \emph{universal} in $G$, which means that $v$ is adjacent
to all the vertices of $G$. This implies the following alternative
definition of local boxicity (see~\cite{BSU}). A graph has local
boxicity at most $d$ if and only if $G$ is the intersection of
$\ell$ interval graphs $G_1,\ldots,G_{\ell}$ (for some $\ell\ge d$), such
that each vertex $v$ of $G$ is universal in all but at most $d$ graphs
$G_i$ ($1\le i \le \ell$). Equivalently, there exist $\ell$
co-interval graphs
$H_1,\ldots,H_\ell$ which are all subgraphs of $G^c$, and such that
$E(G^c)=E(H_1)\cup\cdots\cup E(H_\ell)$ and each vertex of $V(G)=V(G^c)$ is
contained in at most $d$ graphs $H_i$, $(1\le i \le \ell$).

In the remainder of the paper, it will sometimes be useful to consider
these alternative definitions of local boxicity instead of the
original one.

\medskip

Let $G$ be a graph and fix a vertex $v$ of $G$. Let $(B_u)_{u\in G}$ be a
$d$-box representation of $G-v$. By adding one dimension in which $v$ is
mapped to $0$, the neighborhood $N(v)$ of $v$ to $[0,1]$, and the
remaining vertices of $G$ to $1$, we obtain the following.

\begin{observation}\label{obs:g-v}
For every graph $G$ and for every vertex $v\in G$, $\boxi(G)\le
\boxi(G-v)+1$ and $\lbox(G)\le \lbox(G-v)+1$.
\end{observation}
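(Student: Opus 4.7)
The plan is to prove both inequalities simultaneously by a single explicit construction that lifts an optimal box representation of $G-v$ to a representation of $G$ by appending exactly one coordinate. This follows the sketch indicated in the sentence preceding the observation, but I want to be careful about what happens in the local boxicity count, which is where the only real subtlety arises.

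Concretely, I would fix $v\in V(G)$ and start from an optimal representation $(B_u)_{u\in V(G-v)}$ in some dimension $d'$, whose boxes are $d$-local for $d=\lbox(G-v)$ (so in particular, when the goal is just the boxicity bound, we take $d=d'=\boxi(G-v)$). I then extend it to a representation of $G$ in dimension $d'+1$: in the first $d'$ coordinates I keep the boxes $B_u$ unchanged for $u\neq v$, and I set the projection of $v$ to $\mathbb{R}$; in the new $(d'+1)$-th coordinate I assign $v$ the interval $\{0\}$, each $u\in N_G(v)$ the interval $[0,1]$, and each $u\in V(G)\setminus (\{v\}\cup N_G(v))$ the interval $\{1\}$.

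Next I would verify that this is a box representation of $G$. Among pairs $u_1,u_2\neq v$, the $(d'+1)$-th intervals pairwise intersect (all of them meet the point $1$), so adjacency is governed by the first $d'$ coordinates, i.e.\ by the original representation of $G-v$. For pairs containing $v$, the first $d'$ coordinates place no restriction, so adjacency is governed by the new coordinate: $\{0\}$ meets $[0,1]$ but misses $\{1\}$, which exactly realises $N_G(v)$. This gives $\boxi(G)\le\boxi(G-v)+1$ immediately, since we used exactly one extra coordinate.

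For the local boxicity statement, the point I expect to be the main thing to check is the per-vertex bookkeeping of local coordinates. The vertex $v$ is local only in the new coordinate (being $\mathbb{R}$ in the first $d'$), so it contributes local count $1\le \lbox(G-v)+1$. Every other vertex $u$ is local in the new coordinate (its projection is $[0,1]$ or $\{1\}$, both bounded), and is local in at most $\lbox(G-v)$ of the original coordinates since its first $d'$ projections coincide with $B_u$. Hence every vertex is local in at most $\lbox(G-v)+1$ coordinates, proving $\lbox(G)\le \lbox(G-v)+1$. Note that the construction genuinely uses the fact that intervals are allowed to be either bounded or equal to $\mathbb{R}$: assigning $v$ the bounded interval $\{0\}$ forces the non-neighbours of $v$ to use bounded intervals as well, which is precisely why the local count in the new coordinate increases for every vertex—but by only $1$, which is exactly what the observation asserts.
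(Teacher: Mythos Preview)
Your argument is correct and follows exactly the construction sketched in the paragraph preceding the observation in the paper: add one new coordinate in which $v$ is mapped to $0$, its neighbours to $[0,1]$, and the remaining vertices to $1$, while $v$ is $\mathbb{R}$ in all old coordinates. Your careful per-vertex bookkeeping for the local count is precisely what the paper leaves implicit.
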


Given a graph $G=(V,E)$ and a subset $S$ of vertices of $G$, we denote
by $G[S]$ the subgraph of $G$ induced by $S$, and by
$G\langle S\rangle$ the graph obtained from $G$ by adding edges
between every two vertices $u,v$ of $G$ that do not both lie in
$S$. Note that the vertex sets of $G$ and $G\langle S\rangle$
coincide. As all the vertices of $G-S$ are universal in $G\langle
S\rangle$, they can be mapped to $\mathbb{R}$ in every dimension
without loss of generality, and thus the following holds.

\begin{observation}\label{obs:angle}
For every graph $G$ and subset $S$ of vertices of $G$, $\lbox(G\langle S\rangle)=  \lbox(G[S])$.
\end{observation}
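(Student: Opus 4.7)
The plan is to prove the two inequalities $\lbox(G\langle S\rangle)\le \lbox(G[S])$ and $\lbox(G\langle S\rangle)\ge \lbox(G[S])$ separately, using the fact that, by construction of $G\langle S\rangle$, every non-edge of $G\langle S\rangle$ has both endpoints in $S$, so $G\langle S\rangle[S] = G[S]$ and every vertex of $V(G)\setminus S$ is universal in $G\langle S\rangle$.

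For $\lbox(G\langle S\rangle)\ge \lbox(G[S])$, I would simply observe that local boxicity is monotone under induced subgraphs: given any $d$-local box representation $(B_v)_{v\in V(G)}$ of $G\langle S\rangle$ in some dimension $d'$, the restriction $(B_v)_{v\in S}$ is a $d$-local box representation of $G\langle S\rangle[S]=G[S]$, since intersections and the number of bounded projections are unchanged.

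For the reverse inequality $\lbox(G\langle S\rangle)\le \lbox(G[S])$, set $d:=\lbox(G[S])$ and take a $d$-local box representation $(B_v)_{v\in S}$ of $G[S]$ in some dimension $d'$. I would extend it to $V(G)$ by defining $B_u:=\mathbb{R}^{d'}$ for every $u\in V(G)\setminus S$. Each such $B_u$ is $0$-local (all $d'$ projections equal $\mathbb{R}$), so the whole family remains $d$-local. For any $u\in V(G)\setminus S$ and any $w\in V(G)$ one has $B_u\cap B_w=B_w\ne\emptyset$, matching the fact that $u$ is universal in $G\langle S\rangle$; and for $u,v\in S$, $B_u\cap B_v\ne\emptyset$ iff $uv\in E(G[S])=E(G\langle S\rangle[S])$. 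Hence the extended family is a $d$-local box representation of $G\langle S\rangle$.

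There is essentially no obstacle: the statement is the formal counterpart of the alternative definition of local boxicity recalled in Section~\ref{sec:prelbox}, namely that mapping a vertex to $\mathbb{R}$ in some coordinate is exactly how universality is encoded. The only point to be careful about is that the definition of a $d$-local box allows the ambient dimension $d'$ to exceed $d$, so no dimension inflation is needed when passing from $G[S]$ to $G\langle S\rangle$.
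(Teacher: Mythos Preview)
Your proof is correct and follows the same idea as the paper: vertices of $V(G)\setminus S$ are universal in $G\langle S\rangle$, so they can be mapped to $\mathbb{R}$ in every dimension, which gives $\lbox(G\langle S\rangle)\le \lbox(G[S])$, while the reverse inequality is the monotonicity of local boxicity under induced subgraphs. The paper states this in a single sentence, whereas you have (correctly) spelled out both inequalities in detail.
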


Given a bipartition $A,B$ of the vertex set of a graph $G$, we denote
by $G\langle A,B\rangle$ the graph obtained from $G$ by adding edges
between any pair $u,v$ of vertices such that $u$ and $v$ are on the
same side of the bipartition (equivalently, by making $A$ and $B$
cliques in $G$).

\begin{observation}\label{obs:angle2}
For every graph $G=(V,E)$ and any bipartition $A,B$ of $V$, we have
$G=G\langle A\rangle \cap G\langle B\rangle\cap G\langle A,B\rangle$ and thus $\lbox(G)\le
\lbox(G[A])+\lbox(G[B])+\lbox(G\langle A,B\rangle)$.
\end{observation}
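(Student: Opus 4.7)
The plan is to verify the intersection identity by case analysis on vertex pairs, then combine box representations dimension-wise while tracking how many dimensions each vertex is local in.

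First I would prove the equality $G=G\langle A\rangle \cap G\langle B\rangle\cap G\langle A,B\rangle$ by considering the three possible types of pairs $\{u,v\}\subseteq V$. If both $u,v\in A$, then the pair $\{u,v\}$ is an edge in $G\langle B\rangle$ and in $G\langle A,B\rangle$ (by the definitions of these augmented graphs, which make every pair outside $A$, respectively every pair inside $A$ or inside $B$, an edge), while $G\langle A\rangle$ agrees with $G$ on this pair; hence the intersection agrees with $G$ on $\{u,v\}$. The case $u,v\in B$ is symmetric. Finally, if $u\in A$ and $v\in B$, then $\{u,v\}$ is an edge of $G\langle A\rangle$ and of $G\langle B\rangle$, while $G\langle A,B\rangle$ agrees with $G$ on this pair. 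In all three cases, the intersection of the three graphs coincides with $G$, which proves the identity.

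Next I would invoke the general principle (already used implicitly in \cref{sec:prelbox}) that local boxicity is subadditive under graph intersection: if $G=H_1\cap\cdots\cap H_k$ and each $H_i$ admits a $d_i$-local box representation $(B^{(i)}_v)_{v\in V}$ in some dimension $d'_i$, then concatenating the coordinates produces, for each $v\in V$, a box $B_v=B^{(1)}_v\times\cdots\times B^{(k)}_v$ in dimension $d'_1+\cdots+d'_k$. Two such concatenated boxes intersect if and only if their projections intersect in every block, so the resulting family is a box representation of $G$; moreover each $v$ is local in at most $d_1+\cdots+d_k$ dimensions of the concatenated representation. Applied to $G=G\langle A\rangle\cap G\langle B\rangle\cap G\langle A,B\rangle$, this yields
\[
\lbox(G)\le \lbox(G\langle A\rangle)+\lbox(G\langle B\rangle)+\lbox(G\langle A,B\rangle).
\]

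To conclude, I would apply \cref{obs:angle} to rewrite the first two terms: since $G\langle A\rangle$ (respectively $G\langle B\rangle$) is obtained from $G$ by the construction of that observation with the set $A$ (respectively $B$), we have $\lbox(G\langle A\rangle)=\lbox(G[A])$ and $\lbox(G\langle B\rangle)=\lbox(G[B])$, yielding the claimed inequality. There is no real obstacle here beyond carefully handling the intersection identity; the only point that merits care is checking that the concatenation of $d_i$-local representations produces a $(d_1+d_2+d_3)$-local representation, which follows immediately because a coordinate of the concatenation is bounded precisely when the corresponding coordinate in one of the factor representations is bounded.
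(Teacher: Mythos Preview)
Your proof is correct. The paper states this as an observation without proof, so there is no argument in the paper to compare against; your approach—verifying the intersection identity by case analysis on the three types of vertex pairs, then using subadditivity of local boxicity under graph intersection together with \cref{obs:angle}—is precisely the natural argument the paper takes for granted.
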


In the definitions of local boxicity, the dimension $d'$ of the space
or the number $d'$ of interval graphs in the intersection is unbounded
(as a function of $d$). We now observe that we can always assume
without loss of generality that $d'$ is bounded.

\begin{observation}\label{obs:dn}
Every $n$-vertex graph $G$ of local boxicity at most $d$ has a
$d$-local box representation in dimension $d'\le dn$.
\end{observation}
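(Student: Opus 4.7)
The plan is to exploit the equivalent characterization of local boxicity via interval graph intersections, recalled just above the observation in Section~\ref{sec:prelbox}. Start from any $d$-local box representation of $G$ in some (possibly huge) dimension $d''$. Projecting on each coordinate yields interval graphs $G_1, \ldots, G_{d''}$ whose intersection is $G$, with the property that every vertex $v \in V(G)$ is universal in all but at most $d$ of them. So it suffices to bound the number of interval graphs needed in such a decomposition.

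The key reduction is that any $G_i$ in which \emph{every} vertex is universal equals $K_n$, and therefore contributes nothing to the intersection and can be discarded. After discarding all such trivial factors, we are left with $\ell \le d''$ interval graphs $G_1, \ldots, G_\ell$ whose intersection is still $G$, and such that each $G_i$ contains at least one non-universal vertex. For each $i$, let $U_i \subseteq V(G)$ be the set of vertices that are non-universal in $G_i$; we just arranged that $|U_i| \ge 1$.

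Now double-count the incidences between vertices and indices: on the one hand, $\sum_{i=1}^\ell |U_i| \ge \ell$, and on the other,
\[
\sum_{i=1}^\ell |U_i| = \sum_{v \in V(G)} |\{ i : v \in U_i \}| \le \sum_{v \in V(G)} d = dn,
\]
where the inequality uses that each vertex is non-universal in at most $d$ of the $G_i$. Combining these yields $\ell \le dn$.

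Finally, translate the reduced decomposition back into a box representation by taking, for each $G_i$, an interval representation (in which universal vertices are mapped to $\mathbb{R}$ and non-universal vertices to bounded intervals) and putting them in $\ell$ orthogonal coordinates. This produces a $d$-local box representation of $G$ in dimension $d' = \ell \le dn$, as desired. There is no real obstacle here; the only thing to be careful about is noting that the discarding step is truly free, which relies on $K_n$ being the identity element for graph intersection on a fixed vertex set.
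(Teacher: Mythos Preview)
Your proof is correct and follows essentially the same counting argument as the paper: since each vertex is non-universal in at most $d$ coordinates, at most $dn$ coordinates can carry a non-universal vertex, and the rest may be discarded. The paper's version is marginally more direct---it works with the box representation itself, removing any dimension in which every projection equals $\mathbb{R}$, rather than passing through the interval-graph characterization and rebuilding---but the idea is identical.
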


To see this, note that each vertex is universal in all but at most $d$
dimensions, so if there are more than $dn$ dimension, in some
dimension $i$ all vertices are mapped to $\mathbb{R}$. In this case
dimension $i$ can be omitted.

\medskip

We use this simple observation to associate to each $n$-vertex graph $G$ of local boxicity
$d$ a unique binary word as follows. Consider a $d$-local
representation of $G$, in dimension $d'\le dn$. For each vertex $v$ of $G$, we record the
number $d_v$ of dimensions in which $v$ is not universal, and for each
such dimension $i$, we record $i$ and the interval of $v$ in this
representation. We can always assume without loss of generality that the ends of each interval in
an interval representation of an
$n$-vertex interval graph are integers in $[2n]$, so this takes at most
\[\lceil\log
  d\rceil+d\cdot(\lceil\log(dn)\rceil+ 2\lceil\log(2n)\rceil)\le (d+1)\log d+3d \log n+5d+1
\]
bits per vertex. Assuming $d\ge 2$, this is at most $3d \log n
+7d\log d$ bits per vertex, and thus the complete description of $G$ takes at most $nd(3\log n+7 \log d)$
bits in total. This binary word is enough to reconstruct $G$, so this implies
the following.

\begin{observation}\label{obs:counting}
For any integers $n,d\ge 2$, there are at most $2^{nd(3\log n+7\log d)}$ labelled $n$-vertex graphs of local boxicity at most $d$. 
\end{observation}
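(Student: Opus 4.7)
The plan is to produce an injective binary encoding of any $n$-vertex graph of local boxicity at most $d$, of length at most $nd(3\log n + 7\log d)$ bits, and then conclude by counting the number of binary words of that length.

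First, by \cref{obs:dn}, every such graph $G$ admits a $d$-local box representation in some ambient dimension $d' \le dn$. I would fix any such representation. Next, to make the interval endpoints discrete, I would use the standard reduction for interval graphs: in each coordinate $i \in [d']$, the graph induced by the intervals $\{B_v \cap \{x_i \in \mathbb{R}\} : v \in V(G)\}$ is an interval graph on at most $n$ bounded intervals (plus universal ones mapped to $\mathbb{R}$), so by sweeping the endpoints left to right and perturbing if necessary one may assume that all bounded interval endpoints in coordinate $i$ are distinct integers in $[2n]$, without changing the intersection pattern. This step is routine.

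Then I encode $G$ vertex by vertex. For a vertex $v$, I write down the number $d_v \le d$ of coordinates in which $B_v$ is bounded (costing $\lceil \log d\rceil$ bits) and then, for each such coordinate, the index of the coordinate (at most $\lceil \log(dn)\rceil$ bits, since $d' \le dn$) together with the two integer endpoints of the corresponding interval (at most $2\lceil \log(2n)\rceil$ bits). This gives the per-vertex bound
\[
\lceil \log d \rceil + d\bigl(\lceil \log(dn)\rceil + 2\lceil\log(2n)\rceil\bigr) \le (d+1)\log d + 3d\log n + 5d + 1,
\]
which, under $d\ge 2$, is at most $3d\log n + 7d\log d$, so the whole encoding uses at most $nd(3\log n + 7\log d)$ bits.

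The only thing that needs checking is that this encoding is indeed injective, i.e.\ that the labelled graph $G$ can be recovered from its codeword. This is immediate: the codeword reconstructs the $d$-local box representation of each vertex, and adjacency in $G$ is determined by the intersection pattern of these boxes, which is determined by the endpoints recorded. Counting the number of binary strings of the appropriate length then yields the bound $2^{nd(3\log n + 7\log d)}$ on the number of labelled $n$-vertex graphs of local boxicity at most $d$. There is no substantial obstacle; the only mildly delicate point is making sure the ambient dimension $d'$ is bounded (handled by \cref{obs:dn}) and choosing the per-vertex format so that both the trivial coordinates are skipped and the chosen non-trivial coordinates are identified unambiguously.
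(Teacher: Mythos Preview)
Your proposal is correct and follows essentially the same approach as the paper: bound the ambient dimension by $dn$ via \cref{obs:dn}, normalize interval endpoints to integers in $[2n]$, encode each vertex by its set of local dimensions together with the corresponding interval endpoints, and tally the bits to obtain the per-vertex bound $3d\log n + 7d\log d$. The paper's argument is identical in structure and in the arithmetic.
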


There are $2^{{n\choose 2}}$ labelled $n$-vertex graphs, so this
immediately implies the following result, which was established
in~\cite{KMMSSUW} (with a different multiplicative constant).

\begin{corollary}\label{cor:allgraphs}
Almost all $n$-vertex graphs have local
boxicity at least  $\tfrac{n}{21\log n}$.
\end{corollary}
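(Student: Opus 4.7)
The plan is a direct counting argument based on Observation~\ref{obs:counting}. Set $d=\lfloor n/(21\log n)\rfloor$; for all sufficiently large $n$, $d\ge 2$, so Observation~\ref{obs:counting} applies and bounds the number of labelled $n$-vertex graphs of local boxicity at most $d$ by $2^{nd(3\log n+7\log d)}$. Since $d\le n$, we have $\log d\le \log n$, hence $3\log n+7\log d\le 10\log n$ and the exponent is at most $10nd\log n\le 10n^2/21$.

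Comparing with the total number $2^{\binom{n}{2}}$ of labelled $n$-vertex graphs, the proportion of $n$-vertex graphs with local boxicity at most $d$ is at most
\[
2^{10n^2/21-\binom{n}{2}}=2^{-n(n-21)/42},
\]
which tends to $0$ as $n\to\infty$. Consequently, almost all $n$-vertex graphs satisfy $\lbox(G)\ge d+1\ge n/(21\log n)$, which proves the corollary.

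Since the nontrivial work has been absorbed into Observation~\ref{obs:counting}, no real obstacle arises; the only point worth highlighting is the numerical inequality $10/21<1/2$, which is what ensures that the counting upper bound on graphs of small local boxicity is genuinely dominated by $2^{\binom{n}{2}}$, leaving enough slack to conclude that the proportion tends to zero (in fact, exponentially fast in $n^2$).
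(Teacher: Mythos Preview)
Your proof is correct and follows essentially the same counting argument as the paper: apply Observation~\ref{obs:counting} with $d\approx n/(21\log n)$, bound $3\log n+7\log d\le 10\log n$, and compare $2^{10n^2/21}$ against $2^{\binom{n}{2}}$. You have simply been a bit more careful than the paper (taking $d=\lfloor n/(21\log n)\rfloor$, checking $d\ge2$, and computing the exponent difference explicitly), but the approach is identical.
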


\begin{proof}
By \cref{obs:counting}, there are at most $2^{10 nd\log n}$ labelled
$n$-vertex graphs of local boxicity at most $d$, and thus at most
$2^{10n^2/21}=o(2^{{n\choose 2}})$ $n$-vertex labelled graphs of boxicity at most $\tfrac{n}{21\log
  n}$. It follows that almost all $n$-vertex graphs have local
boxicity at least  $\tfrac{n}{21\log n}$.
\end{proof}

It was proved by
Liebenau and Wormald \cite[Corollary 1.5]{LW} that for any $1\le \Delta\le n-2$ there
are at least $(n/e^2\Delta)^{\Delta n/2} = 2^{\Delta n \log (n/e^2\Delta)/2}$
$n$-vertex $\Delta$-regular graphs. We obtain the following
consequence of \cref{obs:counting}.

\begin{corollary}\label{cor:delta}
For any $\epsilon>0$ and any $\Delta(n)=O(n^{1-\epsilon})$, almost all $n$-vertex graphs of maximum degree
$\Delta=\Delta(n)$ have local boxicity at least $\tfrac{\epsilon}{21}\Delta$.
\end{corollary}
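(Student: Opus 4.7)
My plan is to adapt the counting argument from the proof of \cref{cor:allgraphs}: I upper bound the number of labelled $n$-vertex graphs of local boxicity at most $d := \lfloor \tfrac{\epsilon}{21}\Delta\rfloor$ via \cref{obs:counting}, and I lower bound the number of labelled $n$-vertex graphs of maximum degree $\Delta$ using the Liebenau--Wormald estimate cited just before the statement (every $\Delta$-regular graph has maximum degree $\Delta$, so the $\Delta$-regular graphs form a subfamily of the max-degree-$\Delta$ graphs). The factor $21$ is chosen precisely so that the ratio of these two counts tends to $0$.

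Concretely, I may assume $d\ge 2$, since otherwise the claimed bound is at most $1$ and is either vacuous or follows from the trivial observation that almost every $n$-vertex max-degree-$\Delta$ graph has at least one edge for any fixed $\Delta \ge 1$. By \cref{obs:counting} and the bound $\log d \le \log n$, the number of labelled $n$-vertex graphs of local boxicity at most $d$ is at most $2^{10\, nd \log n} \le 2^{(10\epsilon/21)\, \Delta n \log n}$. By Liebenau--Wormald, the number of $\Delta$-regular $n$-vertex graphs is at least $2^{\Delta n \log(n/e^2\Delta)/2}$; and since $\Delta = O(n^{1-\epsilon})$ yields $\log(n/e^2\Delta) \ge \epsilon\log n - O(1)$, this is at least $2^{(\epsilon/2)\,\Delta n\log n - O(\Delta n)}$. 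Hence the ratio of numerator to denominator has exponent
\[
\bigl(\tfrac{10\epsilon}{21} - \tfrac{\epsilon}{2}\bigr)\,\Delta n \log n + O(\Delta n) = -\tfrac{\epsilon}{42}\,\Delta n \log n + O(\Delta n) \longrightarrow -\infty,
\]
which proves the claim.

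I do not foresee any serious obstacle: the entire arithmetic boils down to $\tfrac{10}{21} < \tfrac{1}{2}$, and the $O(\Delta n)$ error terms are absorbed by the leading $\Delta n \log n$ term since $\log n \to \infty$. The only points worth double-checking are (i) the edge case $d < 2$ where \cref{obs:counting} does not apply as stated, but where the conclusion is either vacuous or handled by a trivial edge-existence argument, and (ii) that the Liebenau--Wormald bound applies uniformly throughout the relevant range $1 \le \Delta \le n-2$ so that the error terms can be taken to be genuinely $O(\Delta n)$.
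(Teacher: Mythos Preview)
Your proof is correct and essentially identical to the paper's: both apply \cref{obs:counting} to bound the number of graphs of local boxicity at most $\tfrac{\epsilon}{21}\Delta$ and compare against the Liebenau--Wormald count of $\Delta$-regular graphs, with the arithmetic reducing to $\tfrac{10}{21}<\tfrac12$ (the paper simply asserts $2^{10n\Delta\epsilon\log n/21}=o(2^{\Delta n\log(n/e^2\Delta)/2})$ without writing out the exponent comparison). The only slip is in your edge-case discussion: to obtain $\lbox\ge 1$ you need the graph to be \emph{non-complete} (which is automatic here since $\Delta<n-1$), not merely to have an edge, and even that does not cover the sub-range $\tfrac{\epsilon}{21}\Delta\in(1,2)$ where one needs $\lbox\ge 2$---but the paper ignores the hypothesis $d\ge 2$ of \cref{obs:counting} entirely, so your treatment is already more careful, and the residual gap is easily patched by applying \cref{obs:counting} with $d=2$ in that range.
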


\begin{proof}
By \cref{obs:counting}, there are at most $2^{10 nd\log n}$ labelled
$n$-vertex graphs of local boxicity at most $d$, and thus at most
$2^{10n \Delta \epsilon \log n/21}=o(2^{\Delta n \log (n/e^2\Delta)/2})$
$n$-vertex labelled graphs of
boxicity at most $\tfrac{\epsilon \Delta}{21}$. It follows that almost all $n$-vertex graphs of maximum degree
$\Delta=\Delta(n)=O(n^{1-\epsilon})$ have local
boxicity at least  $\tfrac{\epsilon \Delta}{21}$.
\end{proof}

There are 
\begin{equation*}
\dbinom{{n\choose 2}}{m}\ge \exp\big(m\ln(n(n-1)/2m)\big)  
\end{equation*}
$n$-vertex labelled graphs on $m$ edges. A similar proof as that of \cref{cor:allgraphs} and \cref{cor:delta} shows the following.

\begin{corollary}\label{cor:edges}
For any function $m=\Theta(n^2)$,  almost all $n$-vertex graphs with $m$ edges have local
boxicity $\Omega(\sqrt{m}/\log m)$. 
\end{corollary}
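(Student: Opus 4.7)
The plan is to follow the counting argument used in the proofs of \cref{cor:allgraphs} and \cref{cor:delta}. By \cref{obs:counting}, for $d\ge 2$ the number of labelled $n$-vertex graphs of local boxicity at most $d$ is at most $2^{nd(3\log n+7\log d)}$, and just before the statement of the corollary we have the lower bound
\[
    \binom{\binom{n}{2}}{m}\ge \exp\!\big(m\ln(n(n-1)/2m)\big)
\]
on the number of labelled $n$-vertex graphs with $m$ edges. The aim is to show that for a small enough absolute constant $c>0$ and $d:=c\sqrt{m}/\log m$, the first quantity is $o$ of the second.

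I would set $d:=c\sqrt{m}/\log m$. Since $m=\Theta(n^2)$, this gives $d=\Theta(n/\log n)$, so $d\log n=O(n)$ and $\log d=O(\log n)$; plugging into the bound of \cref{obs:counting} yields an upper bound of the form $2^{O(c\cdot n^2)}$ on the number of labelled $n$-vertex graphs of local boxicity at most $d$. At the same time, since $m=\Theta(n^2)$ means that $m/\binom{n}{2}$ is bounded away from $1$ by a constant factor, the quantity $\ln(n(n-1)/2m)$ is bounded below by a positive constant, so the lower bound on the number of $m$-edge labelled graphs becomes $\exp(\Omega(n^2))$.

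Choosing $c$ small enough to make the constant in the exponent $O(c\cdot n^2)$ strictly smaller than the positive constant appearing in $\Omega(n^2)$ from the previous step then forces the fraction of labelled $n$-vertex graphs with $m$ edges having local boxicity at most $d$ to tend to $0$ as $n\to\infty$. The conclusion $\lbox(G)=\Omega(\sqrt m/\log m)$ for almost every such graph follows.

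The only mild point requiring care is the implicit lower bound on $\ln(n(n-1)/2m)$: one must know that $m$ is bounded away from $\binom{n}{2}$ by a constant multiplicative factor, which is built into the assumption $m=\Theta(n^2)$. Once this is in hand, there is no real obstacle — the argument is a direct comparison of exponents exactly as in \cref{cor:allgraphs} and \cref{cor:delta}.
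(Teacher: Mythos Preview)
Your approach is exactly what the paper has in mind; the paper itself gives no detailed proof here and simply says ``a similar proof as that of \cref{cor:allgraphs} and \cref{cor:delta} shows the following.'' Your comparison of $2^{nd(3\log n+7\log d)}$ against $\exp(m\ln(n(n-1)/2m))$ with $d=c\sqrt{m}/\log m$ is precisely that similar proof.

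One small correction on your final remark: the hypothesis $m=\Theta(n^2)$ does \emph{not} by itself guarantee that $m/\binom{n}{2}$ is bounded away from $1$ (for instance $m=\binom{n}{2}$ is certainly $\Theta(n^2)$), so $\ln(n(n-1)/2m)$ is not automatically bounded below by a positive constant. In fact the corollary is false at that extreme: if $m=\binom{n}{2}$ the only graph is $K_n$, whose local boxicity is $0$. The intended reading is that $m\le (1-\varepsilon)\binom{n}{2}$ for some fixed $\varepsilon>0$; once you add that (tacit) assumption, your argument goes through verbatim. This is a defect of the statement rather than of your proof.
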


In the regime $m=\Theta(n^2)$,
graphs have Euler genus $g=\Theta(n^2)$
(this follows from Euler's formula, which easily
implies that for any $n$-vertex graph with $m$-edges and Euler genus
$g$, we have $m/3-n+2\le g \le m-n+1$). This implies the following.

\begin{corollary}\label{cor:genus}
For any function $g=\Theta(n^2)$,  almost all $n$-vertex graphs with Euler genus $g$  have local
boxicity $\Omega(\sqrt{g}/\log g)$. 
\end{corollary}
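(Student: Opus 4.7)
The plan is to run essentially the same counting argument as in \cref{cor:allgraphs}, \cref{cor:delta} and \cref{cor:edges}, combining the upper bound of \cref{obs:counting} on the number of $n$-vertex graphs of small local boxicity with a lower bound, obtained from Euler's formula, on the number of $n$-vertex graphs of Euler genus $g$.

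Fix $g=g(n)=\Theta(n^2)$ and set $d = c\sqrt{g}/\log g$ for a small constant $c>0$ to be chosen at the end. By \cref{obs:counting}, there are at most $2^{10nd\log n}$ labelled $n$-vertex graphs of local boxicity at most $d$. Since $g=\Theta(n^2)$, we have $\sqrt{g}=\Theta(n)$ and $\log g=\Theta(\log n)$, so $d=\Theta(cn/\log n)$ and $10nd\log n = \Theta(cn^2)$; the upper bound thus becomes $2^{O(c)n^2}$.

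For the lower bound I use the Euler's formula inequality $g\le m-n+1$ recalled in the paragraph immediately preceding the corollary: every $n$-vertex graph with $m_0 := g+n-1 = \Theta(n^2)$ edges has Euler genus at most $g$. Since $m_0$ is a constant fraction of $\binom{n}{2}$, the displayed binomial estimate preceding \cref{cor:edges} gives $\binom{\binom{n}{2}}{m_0} = 2^{\Omega(n^2)}$, and hence there are at least this many labelled $n$-vertex graphs with Euler genus at most $g$. Comparing the two estimates, the fraction of $n$-vertex graphs of Euler genus at most $g$ whose local boxicity is at most $c\sqrt{g}/\log g$ is at most $2^{O(c)n^2-\Omega(n^2)} = o(1)$ for $c$ small enough.

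The main point I expect to double-check is the precise reading of ``Euler genus $g$''. As written, the argument handles the interpretation ``Euler genus at most $g$'', which is the natural reading in view of \cref{thm:genus}. For the alternative reading ``Euler genus exactly $g$'', the same lower bound on the class size can be recovered by bucketing the $2^{\Omega(n^2)}$ graphs with $m_0$ edges according to their Euler genus (which, by the two-sided Euler inequality, lies in an interval of length $\Theta(n^2)$), and then letting $m_0$ range over the $\Theta(n^2)$ admissible edge counts for the target genus $g$; the polynomial factors lost in this bucketing are absorbed by the exponential gap between $2^{O(c)n^2}$ and $2^{\Omega(n^2)}$.
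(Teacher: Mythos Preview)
Your argument is correct and matches the paper's approach. The paper does not give a separate proof of \cref{cor:genus}; it simply states, after recalling the Euler inequalities $m/3-n+2\le g\le m-n+1$, that the corollary follows from \cref{cor:edges}. You have unpacked exactly this: use \cref{obs:counting} for the upper count, and use $g\le m-n+1$ to exhibit $2^{\Omega(n^2)}$ graphs of Euler genus at most $g$ (namely those with $m_0=g+n-1$ edges). The reading ``Euler genus at most $g$'' is indeed the intended one, consistent with the phrasing of \cref{thm:genus}.

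One caveat on your closing paragraph: the bucketing sketch for ``Euler genus exactly $g$'' does not work as written. Pigeonholing the $2^{\Omega(n^2)}$ graphs with $m_0$ edges over the $\Theta(n^2)$ possible genera only yields \emph{some} genus value with many graphs, not the specific target $g$, and letting $m_0$ vary does not repair this. If one really wanted the exact-genus version, a different argument would be needed (for instance, showing that the number of graphs of genus $\le g$ exceeds the number of genus $\le g-1$ by $2^{\Omega(n^2)}$, which requires more than the crude Euler bounds). Since the paper means ``at most $g$'', this is not needed.
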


\subsection{Probabilistic preliminaries}

The following lemma is widely known as Chernoff's inequality, see Corollary 2.3 in \cite{JLR} or Theorem 4.4 in \cite{MU}.

\begin{lemma}\label{lem:chernoff}
Let $X$ be a binomial random variable $\mathrm{Bin}(n,p)$, and denote $\mu = \mathbb E[X]$. We have

\begin{align*}
    & \mathbb P(X\ge (1+\delta)\mu)\le \exp\left(-\tfrac{\delta^2 \mu}{2+\delta}\right) \text{ for every } \delta\ge 0,\\
    & \mathbb P(X\le (1-\delta)\mu)\le \exp\left(-\tfrac{\delta^2\mu}{2}\right) \text{ for every } \delta\in [0,1].
\end{align*}
\end{lemma}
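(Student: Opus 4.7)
The plan is to apply the standard Cram\'er--Chernoff exponential moment method, which handles both tails in parallel. Writing $X = \sum_{i=1}^n X_i$ as a sum of i.i.d.\ Bernoulli($p$) variables, independence and the elementary inequality $1+x\le e^x$ yield the moment generating function bound
\[\mathbb E[e^{tX}] = (1-p+pe^t)^n = (1+p(e^t-1))^n \le e^{\mu(e^t-1)}\qquad \text{for every } t\in \mathbb R.\]
This single estimate is the probabilistic core of the argument; everything else is optimization and real analysis.

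For the upper tail, applying Markov's inequality to $e^{tX}$ with $t>0$ gives
\[\mathbb P(X \ge (1+\delta)\mu) \le e^{-t(1+\delta)\mu}\,\mathbb E[e^{tX}] \le \exp\bigl(\mu(e^t-1) - t(1+\delta)\mu\bigr),\]
and the choice $t = \ln(1+\delta)$ optimizes the exponent, yielding the classical sharp bound $\mathbb P(X \ge (1+\delta)\mu) \le \bigl(\tfrac{e^\delta}{(1+\delta)^{1+\delta}}\bigr)^\mu$. The analogous computation with $t<0$ and the choice $t=\ln(1-\delta)$ gives $\mathbb P(X \le (1-\delta)\mu) \le \bigl(\tfrac{e^{-\delta}}{(1-\delta)^{1-\delta}}\bigr)^\mu$ for the lower tail.

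What remains is to massage the tight but unwieldy bounds above into the cleaner forms stated in the lemma. For the upper tail, this reduces to proving the inequality $(1+\delta)\ln(1+\delta) - \delta \ge \delta^2/(2+\delta)$ for all $\delta \ge 0$, which I would verify by checking equality at $\delta = 0$ and then comparing derivatives, where the auxiliary estimate $\ln(1+\delta) \ge 2\delta/(2+\delta)$ itself follows by the same derivative-comparison trick. For the lower tail, the Taylor expansion $(1-\delta)\ln(1-\delta) + \delta = \sum_{k\ge 2} \delta^k/(k(k-1))$ immediately yields $(1-\delta)\ln(1-\delta) + \delta \ge \delta^2/2$ on $[0,1]$, which after exponentiation gives the stated bound.

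The main (and only) obstacle is this final conversion step: the closed-form expressions $(1+\delta)^{1+\delta}$ and $(1-\delta)^{1-\delta}$ appearing in the sharp Chernoff bounds are transcendental, and must be approximated from below by rational functions of $\delta$. There is no probabilistic subtlety here---once the exponential moment bound $\mathbb E[e^{tX}]\le e^{\mu(e^t-1)}$ is established, the remainder of the argument is a pair of elementary calculus exercises, and the result then follows exactly as stated in \cite{JLR,MU}.
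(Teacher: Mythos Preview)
Your argument is correct and is exactly the standard textbook derivation of these bounds. Note, however, that the paper does not prove this lemma at all: it is stated as a citation to Corollary~2.3 in~\cite{JLR} and Theorem~4.4 in~\cite{MU}, with no accompanying proof. So there is nothing to compare against---you have supplied a complete (and correct) proof where the paper simply invokes the literature.
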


A direct application of Chernoff's inequality is that the number of edges in $G\in \mathcal G(n,p)$ is highly concentrated. This
can be used to deduce the following.

\begin{corollary}\label{cor:LB RG}
For every $\varepsilon\in (0,1)$ and every $p = p(n) > 0$ such that
$np\in [1-\varepsilon, n^{1-\varepsilon}]$, the binomial random graph $G\in
\mathcal G(n,p)$ has local boxicity at least $\tfrac{\varepsilon}{41} \cdot np$ asymptotically almost surely.
\end{corollary}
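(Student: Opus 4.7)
The plan is to combine the counting bound from \cref{obs:counting} with Chernoff concentration of the edge count of $G\in\mathcal G(n,p)$. First, I would apply \cref{lem:chernoff} to $e(G)\sim\mathrm{Bin}(\binom{n}{2},p)$, whose mean $m^*:=\binom{n}{2}p$ satisfies $m^*\ge(1-\varepsilon)(n-1)/2\to\infty$. Standard Chernoff bounds then show that a.a.s.\ $e(G)\in I:=[m^*/2,\,2m^*]\subseteq[(1-o(1))n^2p/4,\,n^2p]$.

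Next, I would condition on $e(G)=m$: given this, $G$ is uniformly distributed over the $\binom{\binom{n}{2}}{m}$ labelled $n$-vertex graphs with exactly $m$ edges. Set $d=\bigl\lfloor\tfrac{\varepsilon}{41}np\bigr\rfloor$; we may assume $d\ge 2$, since otherwise the bound $\lbox(G)\ge\tfrac{\varepsilon}{41}np$ is trivially small and follows from the fact that $G$ is a.a.s.\ not complete. Since $d\le n^{1-\varepsilon}\le n$, we have $\log d\le\log n$, and \cref{obs:counting} bounds the number of labelled $n$-vertex graphs of local boxicity at most $d$ by $2^{10nd\log n}$. Therefore
$$\mathbb P\bigl[\lbox(G)\le d\mid e(G)=m\bigr]\le\frac{2^{10nd\log n}}{\binom{\binom{n}{2}}{m}}.$$

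To control the denominator I would use $\binom{N}{m}\ge(N/m)^m$ with $N=\binom{n}{2}$; for $m\in I$, $N/m\ge 1/(2p)$, and $p\le n^{-\varepsilon}$ gives $\log_2(N/m)\ge\varepsilon\log n-1$. Together with $m\ge(1-o(1))n^2p/4$, this yields
$$\log_2\binom{\binom{n}{2}}{m}\ge \tfrac14(1-o(1))\,\varepsilon n^2p\log n,$$
while the numerator exponent is at most $\tfrac{10\varepsilon}{41}n^2p\log n$. Since $\tfrac14>\tfrac{10}{41}$, the ratio is at most $2^{-c\varepsilon n^2p\log n}$ for some absolute $c>0$ and $n$ sufficiently large, and this is $o(1)$ because $n^2p\log n\to\infty$ under $np\ge 1-\varepsilon$. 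The corollary then follows by the law of total probability, decomposing over the a.a.s.\ event $\{e(G)\in I\}$.

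The main obstacle is merely matching constants: the gap $\tfrac14-\tfrac{10}{41}=\tfrac{1}{164}$ is tight, and is precisely what forces the factor $\tfrac{1}{41}$ (rather than something rounder) in the statement. All other steps — the Chernoff concentration, the uniform conditional law on graphs with $m$ edges, and the standard estimate $\binom{N}{m}\ge(N/m)^m$ — are routine.
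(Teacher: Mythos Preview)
Your proposal is correct and follows the same route as the paper: Chernoff concentration of $e(G)$, conditioning to get a uniform $\mathcal G(n,m)$, the bound $\binom{N}{m}\ge(N/m)^m$, and \cref{obs:counting}, with the same decisive comparison $\tfrac14>\tfrac{10}{41}$ forcing the constant $41$. One tiny quibble: when $d=\lfloor\tfrac{\varepsilon}{41}np\rfloor=1$ you actually need $\lbox(G)\ge 2$, and ``$G$ is not complete'' only yields $\lbox(G)\ge 1$ --- but this boundary case is easily handled separately, and the paper itself glosses over the restriction $d\ge 2$ in \cref{obs:counting}.
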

\begin{proof}
By~\cref{lem:chernoff} we know that the number of edges of $G$ is
a.a.s.\ between $n^2p/3$ and $n^2p$. Let us condition on the random variable $m$ and on the a.a.s.\ event that $m\in
[n^2p/3, n^2p]$. Then, the random graph $G\in \mathcal G(n,m)$ has a uniform distribution among all graphs with $n$ vertices and $m$ edges. On the other hand, the number of $n$-vertex graphs with $m$ edges is at least
$\exp(m \ln(n(n-1)/2m))\ge \exp(\varepsilon n^2p\ln n/4)$ for every large enough
$n$. By~\cref{obs:counting}, the number of $n$-vertex graphs graphs of boxicity
less than $\varepsilon np/41$ is at most 
\[
   \exp\left(n\cdot \tfrac{\varepsilon np}{41}(3\ln n + 7\ln(np))\right)=o\left(\exp(\varepsilon n^2p\ln n/4)\right).
\]
Thus, only a negligible proportion of
all graphs on $n$ vertices and $m$ edges have boxicity less than $\varepsilon np/41$ a.a.s.\ It follows
that when $np\in [1-\varepsilon, n^{1-\varepsilon}]$, $G$ has local boxicity at least $\varepsilon np/41$ a.a.s.\
\end{proof}

The next lemma is widely known under the name Lov\'asz Local Lemma, see \cite{EL}.

\begin{lemma}[\cite{EL}, page 616]\label{lem:LLL}
Let $G_D$ be a graph with vertex set $[n]$ and maximum
degree $d$, and let $A_1,\dots, A_n$ be events defined on some probability space such that for each $i\in [n],
\mathbb P(A_i)\le 1/4d$. Suppose further that each $A_i$
is jointly independent of the events $(A_j)_{ij\not\in E(G_D)}$. Then, $\mathbb P(A^c_1\cap\dots \cap A^c_n) > 0$.
\end{lemma}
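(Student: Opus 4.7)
The plan is to prove by induction on $|S|$ the following strengthened claim: for every $i \in [n]$ and every $S \subseteq [n] \setminus \{i\}$,
\[
\mathbb{P}\Bigl(A_i \,\Big|\, \bigcap_{j \in S} A_j^c\Bigr) \le \frac{1}{2d}.
\]
The base case $S = \emptyset$ is immediate from the hypothesis $\mathbb{P}(A_i) \le 1/(4d) \le 1/(2d)$ (and one should implicitly restrict to conditioning events of positive probability, which the induction itself will justify).

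For the inductive step, partition $S = S_1 \sqcup S_2$, where $S_1 = \{j \in S : ij \in E(G_D)\}$ consists of the neighbors of $i$ in $G_D$ and $S_2 = S \setminus S_1$. I would then rewrite
\[
\mathbb{P}\Bigl(A_i \,\Big|\, \bigcap_{j \in S} A_j^c\Bigr) = \frac{\mathbb{P}\bigl(A_i \cap \bigcap_{j \in S_1} A_j^c \,\big|\, \bigcap_{j \in S_2} A_j^c\bigr)}{\mathbb{P}\bigl(\bigcap_{j \in S_1} A_j^c \,\big|\, \bigcap_{j \in S_2} A_j^c\bigr)}.
\]
The numerator is bounded above by $\mathbb{P}(A_i \mid \bigcap_{j \in S_2} A_j^c) = \mathbb{P}(A_i) \le 1/(4d)$, using the joint independence hypothesis, since $S_2$ consists only of indices $j$ with $ij \notin E(G_D)$. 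For the denominator, a union bound gives
\[
\mathbb{P}\Bigl(\bigcap_{j \in S_1} A_j^c \,\Big|\, \bigcap_{j \in S_2} A_j^c\Bigr) \ge 1 - \sum_{j \in S_1} \mathbb{P}\Bigl(A_j \,\Big|\, \bigcap_{k \in S_2} A_k^c\Bigr),
\]
and by the inductive hypothesis applied to the strictly smaller set $S_2$, each term is at most $1/(2d)$. Since $|S_1| \le d$ by the degree bound on $G_D$, the denominator is at least $1/2$, so the ratio is at most $(1/(4d))/(1/2) = 1/(2d)$, closing the induction.

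Once the claim is established, the theorem follows by the chain rule:
\[
\mathbb{P}(A_1^c \cap \dots \cap A_n^c) = \prod_{i=1}^n \mathbb{P}\Bigl(A_i^c \,\Big|\, \bigcap_{j < i} A_j^c\Bigr) \ge \left(1 - \frac{1}{2d}\right)^n > 0.
\]
The main technical point to handle carefully is the splitting of $S$ and the verification that the rewriting of the conditional probability is valid whenever the denominators are positive; this is not truly an obstacle, but it requires a small inductive bookkeeping argument to ensure that all conditioning events $\mathbb{P}(\bigcap_{j \in T} A_j^c)$ encountered along the way are strictly positive, which itself follows from the same inductive claim.
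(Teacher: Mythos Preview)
The paper does not give its own proof of this lemma; it is quoted as a known result with a citation to Erd\H{o}s and Lov\'asz and used as a black box. Your argument is the standard inductive proof of the symmetric Lov\'asz Local Lemma and is correct as written.
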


\subsection{Other combinatorial preliminaries}
For every $r,t,k,s\in \mathbb N, 2\le t\le k\le s$, a \emph{Steiner
  system}  with parameters $(t,k,s)$ is a
family $S_1, S_2, \dots, S_r $ of $k$-element subsets of $[s]$, called \emph{blocks},  such that every $t$-element
subset  of $[s]$ is contained in exactly one block. One may easily deduce that this implies $r=\binom{s}{t}/\binom{k}{t}$.

\smallskip

We will use Steiner systems to prove results on local boxicity with
the help of
the following observation (recall that the notation $G\langle S\rangle$
was introduced in Section~\ref{sec:prelbox}). 

\begin{lemma}\label{lem:groupslbox}
Fix a graph $G = (V, E)$ and integers $k,s\in \mathbb N$ such that $2\le k\le s$, and let $V_1, V_2, \dots, V_s$ be a partition of $V$. Let $r = \binom{s}{2}/\binom{k}{2}$ and $(S_1, S_2, \dots, S_r)$ be a Steiner system with parameters $(2,k,s)$. Then, 
\begin{equation*}
  \lbox(G)\le \tfrac{s-1}{k-1}\cdot \max_{1\le j\le r} \lbox\big(G[{\textstyle \bigcup}_{i\in S_j} V_i]\big).
\end{equation*}
\end{lemma}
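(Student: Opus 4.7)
The plan is to express $G$ as an intersection of block-respecting graphs of the form $G\langle \bigcup_{i\in S_j}V_i\rangle$, and then to combine local representations of the summands, using the regularity of Steiner systems to control the number of non-universal dimensions. The factor $\tfrac{s-1}{k-1}$ will emerge as the number of blocks of the Steiner system through any given index.

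First I would record the standard Steiner counting identity that every element $a\in[s]$ lies in exactly $(s-1)/(k-1)$ blocks: the $s-1$ pairs $\{a,j\}$ are partitioned among these blocks, with each block contributing $k-1$ such pairs. Next I would establish the decomposition
\[
G \;=\; \bigcap_{j=1}^{r} G\Big\langle {\textstyle \bigcup}_{i\in S_j} V_i\Big\rangle.
\]
Indeed, for two distinct vertices $u\in V_a$ and $v\in V_b$, the graph $G\langle \bigcup_{i\in S_j} V_i\rangle$ makes $u,v$ adjacent unless $\{a,b\}\subseteq S_j$, and in the latter case adjacency reduces to $uv\in E(G)$. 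By the Steiner property, at least one such block exists (a unique block when $a\neq b$, and $(s-1)/(k-1)\ge 1$ blocks when $a=b$, which is nonempty since $s\ge k$), so the intersection recovers $E(G)$ exactly.

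Finally I would combine local representations. Setting $d := \max_{1\le j\le r}\lbox(G[\bigcup_{i\in S_j} V_i])$, \cref{obs:angle} provides, for each $j$, a $d$-local box representation of $G\langle \bigcup_{i\in S_j} V_i\rangle$ in which every vertex outside $\bigcup_{i\in S_j} V_i$ is mapped to $\mathbb R$ in every dimension. Concatenating these $r$ representations along a common list of dimensions yields a box representation of $G$. A vertex $v\in V_a$ can be non-universal only in dimensions coming from blocks $S_j$ with $a\in S_j$; there are $(s-1)/(k-1)$ such blocks, each contributing at most $d$ non-universal dimensions. This gives the bound $\lbox(G)\le \tfrac{s-1}{k-1}\cdot d$.

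The step requiring most care is the intersection identity: one must check that $G$ is recovered exactly, including non-edges within a single part $V_a$, which is where one uses the fact that $a$ itself belongs to at least one block. The dimension-counting step also leans on the specific form of the representation produced by \cref{obs:angle} — vertices outside $\bigcup_{i\in S_j} V_i$ must be universal in \emph{every} dimension, not merely in all but $d$ of them — for otherwise the total non-universal count per vertex would not telescope to $\tfrac{s-1}{k-1}\cdot d$.
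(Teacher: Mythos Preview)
Your proposal is correct and follows essentially the same approach as the paper: decompose $G$ as the intersection $\bigcap_j G\langle \bigcup_{i\in S_j}V_i\rangle$, invoke \cref{obs:angle} to get representations in which vertices outside each block are $0$-local, and use the Steiner regularity $(s-1)/(k-1)$ to bound the per-vertex count of non-universal dimensions. Your write-up is slightly more explicit about the within-one-part case and about why the $0$-locality (rather than mere $d$-locality) of outside vertices is needed, but the argument is the same.
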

\begin{proof}
We first prove that $G = \bigcap_{j\in [r]} G\langle \bigcup_{i\in S_j}
V_i\rangle$. Since all the graphs $G\langle \bigcup_{i\in S_j}
V_i\rangle$ are supergraphs of $G$, it suffices to show that every non-edge $uv$ in $G$
appears in at least one of the graphs $G\langle \bigcup_{i\in S_j}
V_i\rangle$, and thus in at least one of the graphs $G[\bigcup_{i\in S_j}
V_i]$. Note that $\{ u,v\}$ is a subset of $V_i\cup V_j$,
for some pair $i,j$, and by definition of Steiner system $\{i,j\}$ is
a subset of some block $S_\ell$. It follows that
$u,v\in \bigcup_{i\in S_\ell} V_i$, as desired.

Recall that by \cref{obs:angle}, $\lbox(G\langle S\rangle)=
\lbox(G[S])$ for any subset $S$ of vertices of $G$, since vertices of
$G-S$ can be mapped to $\mathbb{R}$ in every dimension. This implies 
that, in addition,  there is a $\lbox(G[S])$-local box representation
of $G\langle S\rangle$ in which all the boxes of the vertices of $G-S$
are 0-local.

A simple property of Steiner systems is that every element of $[s]$ is
contained in exactly $\tfrac{s-1}{k-1}$ blocks, and thus every set
among $(V_1, V_2,\dots, V_s)$ participates in exactly
$\tfrac{s-1}{k-1}$ graphs $G[\bigcup_{i\in S_j}
V_i]$, $1\le j\le r$. Hence, we conclude that
\begin{equation*}
    \lbox(G)\le \tfrac{s-1}{k-1} \cdot \max_{1\le j\le r} \lbox\big(G[{\textstyle \bigcup}_{i\in S_j} V_i]\big),
  \end{equation*}
as desired.
\end{proof}

To be able to use Steiner systems we will need to following well known construction.
For every prime number $q$, the \emph{affine plane} over $\mathbb F_q$ is a geometric object consisting of $q^2$ points and $q^2+q$ lines such that:
\begin{itemize}
    \item every line contains $q$ points,
    \item every point is contained in $q+1$ lines, and
    \item every pair of points is contained in exactly one line\footnote{Indeed, the affine plane over $\mathbb F_q$ may be defined for every $q$ that is a power of a prime number. Although we do not give a precise definition of this object, we state its properties, which will be of interest for us.}.
\end{itemize}

\begin{observation}\label{obs:affine}
For every prime number $q\in \mathbb N$, the affine plane over $\mathbb F_q$ is a Steiner system with parameters $(2, q, q^2)$.
\end{observation}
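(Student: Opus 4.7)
The plan is to verify directly from the three listed axioms of the affine plane that the family of lines forms a Steiner system with parameters $(2,q,q^2)$. Fix a prime $q$ and identify the $q^2$ points of the affine plane over $\mathbb{F}_q$ with the set $[q^2]$, and view each line as the $q$-element subset of $[q^2]$ consisting of the points it contains. Then the set of lines becomes a candidate family of blocks, and my task is to check that this family satisfies the definition of a Steiner system with $t=2$, $k=q$, $s=q^2$.

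First, each candidate block has exactly $k=q$ elements, which is precisely the first listed property (every line contains $q$ points). Second, the Steiner property at $t=2$ demands that every $2$-element subset of $[q^2]$ be contained in \emph{exactly} one block. This is precisely the third listed property (every pair of points is contained in exactly one line). These two items together are literally the definition of a $(2,q,q^2)$-Steiner system, so nothing more is required.

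As a sanity check, I would verify that the implicit count agrees: from the definition of Steiner system, the number of blocks must be $r = \binom{q^2}{2}/\binom{q}{2} = q^2+q$, which matches the number of lines in the affine plane. The middle axiom (every point lies on $q+1$ lines) is then a derived consequence, not an additional hypothesis needed for the proof. The main (and essentially only) thing to be careful about is not to confuse the Steiner uniqueness (``exactly one'') with a weaker incidence condition such as ``at most one''; the third listed axiom fortunately states the uniqueness outright, so there is no obstacle here and the argument is just a translation of terminology.
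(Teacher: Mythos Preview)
Your proposal is correct and matches the paper's approach: the paper states this as an observation without proof, treating it as immediate from the three listed properties of the affine plane, and your argument simply spells out that the first and third properties are literally the definition of a $(2,q,q^2)$-Steiner system. Your sanity check on the block count is a nice addition but, as you note, not needed for the verification.
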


Because the affine plane only exists for specific values of $q$, we will need the following result of Dusart, see~\cite{DusPHD}. Stronger results were established in the sequel by the same author in \cite{Dus1, Dus} and by Baker, Herman and Pintz in~\cite{BHP}.

\begin{theorem}[\cite{DusPHD}, Theorem 1.9]\label{thm:dus}
For every real number $t\ge 3275$ there is a prime number in the interval $\left[t, t+\tfrac{t}{2\ln^2 t}\right]$.\qed
\end{theorem}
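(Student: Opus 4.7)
\medskip

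\textbf{Proof plan.} The statement is a quantitative refinement of Bertrand's postulate, so I would follow the classical Rosser--Schoenfeld strategy and reduce the existence of a prime in a short interval to explicit bounds on the Chebyshev function $\theta(x) = \sum_{p\le x}\log p$. A prime lies in $(t, t+t/(2\ln^2 t)]$ whenever $\theta\bigl(t+\tfrac{t}{2\ln^2 t}\bigr) - \theta(t) > 0$, so it suffices to prove the latter for every real $t\ge 3275$.

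\medskip

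Writing $\delta = \delta(t) = 1/(2\ln^2 t)$ and assuming an explicit bound $|\theta(x) - x| \le \eta(x)\cdot x$ for all $x$ larger than some explicit $x_0$, one obtains
\begin{equation*}
  \theta\bigl(t(1+\delta)\bigr) - \theta(t) \ge t\delta - \eta(t(1+\delta))\cdot t(1+\delta) - \eta(t)\cdot t,
\end{equation*}
which is positive as soon as $\eta$ is sufficiently small compared to $\delta$. To produce such an $\eta$ I would begin from the explicit formula for $\psi(x) = \sum_{p^k\le x}\log p$, truncate the sum over non-trivial zeros of the Riemann zeta function using a quantitative zero-free region (for instance the classical $1 - c/\log|\tau|$ region of de la Vall\'ee-Poussin with an explicit constant $c$), and combine this with the Riemann--Siegel numerical verification of the Riemann hypothesis for zeros of $\zeta$ below some height $T$. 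A standard partial summation converts the resulting bound on $\psi$ into a bound on $\theta$, yielding an explicit $\eta(x)$ of the form $C\exp(-c'\sqrt{\log x})$, which is well below $1/(2\ln^2 x)$ for $x$ beyond some large but effective threshold $T_0$.

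\medskip

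The analytic argument covers the range $t \ge T_0$, and the remaining interval $3275 \le t \le T_0$ would be handled by a direct numerical check: produce a sorted list $3275 = p_0 < p_1 < \dots < p_N$ of consecutive primes with $p_N \ge T_0$ and verify that $p_{i+1} - p_i \le p_i/(2\ln^2 p_i)$ for every $i$. Thanks to modern prime tables, this finite verification is tractable for thresholds $T_0$ of the sizes that can be produced by the analytic step above.

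\medskip

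The main obstacle is the tension between the narrowness of the admissible interval, whose length is only $\Theta(x/\log^2 x)$, and the strength of the available explicit error term in the prime number theorem, which does not fall below $1/\log^2 x$ without serious effort. Concretely, bringing $T_0$ down to a level where the finite check is practical requires a very careful joint optimization of the zero-free region constant, the height $T$ up to which the Riemann hypothesis has been numerically verified, and the truncation error in the explicit formula; all three ingredients are quantitatively delicate, and this interplay is precisely the technical heart of Dusart's thesis that the present sketch only outlines.
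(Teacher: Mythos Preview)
The paper does not prove this statement at all: it is quoted verbatim from Dusart's thesis \cite{DusPHD} and used as a black box, as the terminal \qed\ in the statement indicates. So there is nothing to compare your argument against here.

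That said, your sketch is a faithful outline of the standard Rosser--Schoenfeld method that Dusart himself follows: explicit bounds on $|\theta(x)-x|$ obtained from a numerically verified zero-free region and RH verification up to a height, combined with a finite computer check below the analytic threshold. You are also right that the whole difficulty lies in pushing the analytic threshold $T_0$ low enough that the finite check is feasible while keeping the interval as short as $t/(2\ln^2 t)$; this is exactly the delicate optimisation carried out in \cite{DusPHD}. For the present paper, however, none of this is needed---the theorem is simply invoked to guarantee the existence of a prime $q$ with $q^2$ close to $\Delta/\ln\Delta$ in \cref{cor:dus} and the subsequent Steiner-system argument.
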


\begin{corollary}\label{cor:dus}
For every real number $t\ge 3275^2$ there is a square of a prime number in the interval $\left[t, t+\tfrac{7t}{\ln^2 t}\right]$.
\end{corollary}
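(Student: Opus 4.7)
The plan is to apply \cref{thm:dus} to $\sqrt{t}$ and then square the resulting prime. Given $t\ge 3275^2$, we have $\sqrt{t}\ge 3275$, so \cref{thm:dus} yields a prime $q$ with
\begin{equation*}
\sqrt{t}\le q\le \sqrt{t}+\frac{\sqrt{t}}{2\ln^2\sqrt{t}}.
\end{equation*}
Clearly $q^2\ge t$, and $q^2$ is a square of a prime, so it only remains to bound $q^2$ from above in terms of $\tfrac{t}{\ln^2 t}$.

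Squaring the upper bound on $q$, and using $\ln\sqrt{t}=\tfrac{1}{2}\ln t$, one gets
\begin{equation*}
q^2\le t\left(1+\frac{1}{2\ln^2\sqrt{t}}\right)^2 = t+\frac{4t}{\ln^2 t}+\frac{4t}{\ln^4 t} = t+\frac{4t}{\ln^2 t}\left(1+\frac{1}{\ln^2 t}\right).
\end{equation*}
For $t\ge 3275^2$ we have $\ln t\ge 2\ln 3275>16$, so $\ln^2 t>256$ and the bracketed factor is bounded by $1+\tfrac{1}{256}<\tfrac{7}{4}$. Therefore $q^2\le t+\tfrac{7t}{\ln^2 t}$, which gives the desired interval.

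There is no real obstacle here: the statement is essentially Dusart's theorem rescaled from the variable $\sqrt{t}$ to $t$, and the constant $7$ is chosen to absorb comfortably both the cross term $\tfrac{4t}{\ln^2 t}$ coming from $2\cdot\sqrt{t}\cdot\tfrac{\sqrt{t}}{2\ln^2\sqrt{t}}$ and the small quadratic correction $\tfrac{4t}{\ln^4 t}$. The only thing to verify is that the threshold $t\ge 3275^2$ is large enough for the quadratic correction to be dominated, which is immediate from $\ln 3275>8$.
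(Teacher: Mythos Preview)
Your proof is correct and follows essentially the same approach as the paper: apply Dusart's theorem near $\sqrt{t}$ and square. The only difference is that the paper first rounds up to the least integer $k$ with $k^2\ge t$ and applies \cref{thm:dus} to $k$, whereas you apply it directly to the real number $\sqrt{t}$; since \cref{thm:dus} is stated for all real $t\ge 3275$, your version avoids the detour through $k$ and the extra term $2\sqrt{t}+1$, making the estimate slightly cleaner.
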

\begin{proof}
Let $k\in \mathbb N$ be the least integer such that $t\le k^2$. Then, we have that $k^2\in [t, t + 2\sqrt{t}+1]$ and by~\cref{thm:dus} there is a prime number $q$ in the interval $\left[k, k + \tfrac{k}{2\ln^2 k}\right]$. We conclude that $q^2$ is in the interval $\left[k^2, k^2 + \tfrac{11k^2}{10\ln^2 k}\right]$. Since $t\le k^2$ and $k^2 + \tfrac{11k^2}{10\ln^2 k}\le t + 2\sqrt{t} + 1 + \tfrac{11(t+2\sqrt{t}+1)}{10\ln^2(\sqrt{t})}\le t+\tfrac{7t}{\ln^2 t}$ for every real number $t\ge 3275^2$, the claim is proved.
\end{proof}

\section{Local boxicity and maximum degree}\label{sec:deg}

For every $\Delta\in \mathbb N$ with $\Delta/\ln \Delta\ge 3275^2$, fix a prime number $q = q(\Delta)$ with 
\begin{equation*}
    q^2\in \left[\tfrac{\Delta}{\ln \Delta}, \tfrac{\Delta}{\ln \Delta}\left(1+\tfrac{7}{\ln^2(\Delta/\ln \Delta)}\right)\right]
\end{equation*}
(such a prime number $q$ exists by \cref{cor:dus}). Let $\mathcal S_q
= (S_1, S_2, \dots, S_r)$ be the Steiner system over $[q^2]$ with
parameters $(2, q, q^2)$ given by~\cref{obs:affine} (so in particular
each of the sets $S_1, S_2, \dots, S_r$ has size $q$ and $r =
\binom{q^2}{2}/\binom{q}{2} = q(q+1)$).

We start by proving the following lemma.

\begin{lemma}\label{lem:partition}
Consider an integer $\Delta$ with $\Delta/\ln \Delta\ge 3275^2$, and
let $q=q(\Delta)$ and $\mathcal S_q
= (S_1, S_2, \dots, S_r)$ be as defined above. For every $\Delta$-regular graph $G$ one may partition the vertices of $G$ into $q^2$ sets $V_1, V_2, \dots, V_{q^2}$ so that for every $j\in [r]$, the graph $G[\bigcup_{i\in S_j} V_i]$ has maximum degree at most $\left(1+4\sqrt{\tfrac{q\ln \Delta}{\Delta}}\right)\tfrac{\Delta}{q}$.
\end{lemma}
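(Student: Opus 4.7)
The plan is to apply the Lov\'asz Local Lemma (\cref{lem:LLL}) to a uniformly random coloring. For every vertex $v \in V(G)$, I would independently assign a color $c(v) \in [q^2]$ drawn uniformly at random, and set $V_i := c^{-1}(i)$ for each $i \in [q^2]$. For every vertex $v$ and every block index $j \in [r]$, let $A_{v,j}$ denote the bad event
\[
    \bigl|N_G(v) \cap {\textstyle\bigcup}_{i \in S_j} V_i\bigr| > (1+\delta)\tfrac{\Delta}{q}, \qquad \delta := 4\sqrt{q\ln\Delta/\Delta}.
\]
If no $A_{v,j}$ occurs, then for every $j \in [r]$ and every $v \in \bigcup_{i \in S_j} V_i$ the degree of $v$ in $G[\bigcup_{i \in S_j} V_i]$ is at most $(1+\delta)\Delta/q$, which is exactly the bound claimed.

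Since $|S_j| = q$, each neighbor of $v$ lies in $\bigcup_{i \in S_j} V_i$ independently with probability $1/q$, so $|N_G(v) \cap \bigcup_{i \in S_j} V_i|$ is $\mathrm{Bin}(\Delta, 1/q)$ with mean $\Delta/q$. Applying \cref{lem:chernoff}, substituting $\delta^2 \Delta/q = 16\ln\Delta$, and using $\delta = o(1)$ (which follows from $q^2 \le 2\Delta/\ln\Delta$), I would obtain $\mathbb{P}(A_{v,j}) \le \Delta^{-5}$ once the hypothesis $\Delta/\ln\Delta \ge 3275^2$ is in force. Moreover $A_{v,j}$ depends only on the colors of $N_G(v)$, so $A_{v,j}$ and $A_{u,j'}$ are mutually independent whenever $N_G(v) \cap N_G(u) = \emptyset$; this forces $u$ to lie in the ball of radius $2$ around $v$, which contains at most $1 + \Delta + \Delta(\Delta-1) \le 2\Delta^2$ vertices. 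Each such vertex contributes at most $r = q(q+1) \le 2q^2 \le 4\Delta/\ln\Delta$ events, so the dependency graph in \cref{lem:LLL} has maximum degree $d \le 8\Delta^3/\ln\Delta$. The LLL condition $4dp \le 1$ then becomes $32/(\Delta^2\ln\Delta) \le 1$, which holds comfortably.

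Invoking \cref{lem:LLL} thus produces a coloring avoiding every $A_{v,j}$, yielding the required partition. The main obstacle in turning this sketch into a proof is the quantitative bookkeeping: the coefficient $4$ inside $\delta$ and the near-optimal choice $q^2 \in [\Delta/\ln\Delta, (1+o(1))\Delta/\ln\Delta]$ must be calibrated so that the Chernoff tail beats the $\Theta(\Delta^3/\ln\Delta)$ dependency degree by a comfortable polynomial margin \emph{unconditionally} for every $\Delta$ with $\Delta/\ln\Delta \ge 3275^2$, rather than just asymptotically. Some care is also needed when counting dependencies to include the case $u = v$ (two different blocks acting on the same vertex) within the ball of radius $2$.
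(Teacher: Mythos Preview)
Your proposal is correct and matches the paper's proof essentially step for step: the same uniformly random $q^2$-coloring, the same bad events $A_{v,j}$, the Chernoff bound giving a tail of $\Delta^{-16/3}$ (your $\Delta^{-5}$ is the same computation rounded down), and the same dependency count of order $\Delta^3$ via the radius-$2$ ball times $r$, followed by the Lov\'asz Local Lemma. The paper handles the ``quantitative bookkeeping'' you flag simply by noting that $\delta\le 1$ in the stated range (so the Chernoff denominator $2+\delta\le 3$ suffices unconditionally) and bounding the dependency degree by $r(1+\Delta+\Delta^2)\le 4\Delta^3$; nothing further is needed.
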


\begin{proof}
Let us color the vertices of $G$ uniformly at random and
independently with the  colors $1,2,\dots,q^2$. Fix a vertex $v\in
G$. For every color $i\in [q^2]$, let $X_i$ be the number of
neighbors of $v$ in color $i$. Lemma~\ref{lem:chernoff}
(Chernoff's inequality) directly implies that for every vertex $v\in V(G)$, $j\in [r]$ and $\delta\in [0,1]$,
\begin{equation}\label{eq:1}
\mathbb P\bigg(\sum_{i\in S_j} X_i\ge (1+\delta)\tfrac{\Delta}{q}\bigg)\le \exp\left(-\tfrac{\delta^2 \Delta}{3q}\right).
\end{equation}
For each color $i\in [q^2]$, let $V_i$ be the set of vertices of $G$
colored in $i$. This produces a partition of the vertex set of $G$ into $q^2$ color classes $V_1, V_2, \dots, V_{q^2}$. Let $v$ be a vertex of $G$ and let $V_k$ be the set containing $v$ for some $k\in [q^2]$. For every $j\in [r]$, let $A_{j,v}$ be the event that the number of neighbors of $v$ in the set $\bigcup_{i\in S_j} V_i$ is at least $\big(1+4\sqrt{\tfrac{q\ln \Delta}{\Delta}}\big)\tfrac{\Delta}{q}$. By~\eqref{eq:1} the probability for this event is at most $\Delta^{-16/3}$.\par

Now, for every $j\in [r]$ and every $v\in V(G)$, the event $A_{j,v}$
is independent from the family of events $(A_{j',v'})_{j'\in [r], v':
  d_G(v',v)\ge 3}$. The number of events that remain is less than
$r(1+\Delta+\Delta^2)\le 2q(q+1)\Delta^2\le 4\Delta^3$. One may
conclude that for every $\Delta$ in the range given by the lemma, the
assumptions of \cref{lem:LLL} (the Lov\'asz Local Lemma) are satisfied
for the dependency graph of the events $(A_{j,v})_{j\in [r], v\in
  V(G)}$ with vertices $(A_{j,v})_{j\in [r], v\in V(G)}$ and edges
$A_{j,v}A_{j',v'}$ for every two pairs $(j,v)$ and $(j',v')$ such that
$A_{j,v}$ and $A_{j',v'}$ are not independent. We conclude that the
event $\bigcap_{j\in [r], v\in V(G)} A^c_{j,v}$ happens with positive
probability, which concludes the proof.
\end{proof}

For any real number $\Delta$, let $\lbox(\Delta)$ be the maximum local
boxicity of a graph of maximum degree at most $\Delta$. 

\begin{corollary}\label{cor:lboxdelta}
For any $\Delta\in \mathbb N$ with $\Delta/\ln \Delta\ge 3275^2$, let
$q=q(\Delta)$ be as defined above. Then we have 
$$\lbox(\Delta)\le (q+1)\cdot \lbox\Big(\big(1+4\sqrt{\tfrac{q\ln \Delta}{\Delta}}\big)\tfrac{\Delta}{q}\Big).$$
\end{corollary}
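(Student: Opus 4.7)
The plan is to combine \cref{lem:partition} (the partition lemma) with \cref{lem:groupslbox} (the Steiner-system bound), instantiated using the Steiner system $\mathcal S_q$ produced by \cref{obs:affine}, which has parameters $(s,k)=(q^2,q)$ and hence yields the multiplicative factor $\tfrac{s-1}{k-1}=q+1$.

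First I would reduce to the case of $\Delta$-regular graphs. The parameter $\lbox$ is monotone under induced subgraphs: if $H$ is an induced subgraph of $G$, restricting any $d$-local box representation of $G$ to $V(H)$ produces a valid $d$-local box representation of $H$. By the classical theorem of Erd\H{o}s and Kelly, every graph of maximum degree at most $\Delta$ embeds as an induced subgraph of some $\Delta$-regular graph. Consequently $\lbox(\Delta)$ is realized on a $\Delta$-regular graph, so it suffices to bound $\lbox(G)$ for such $G$.

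Fix now a $\Delta$-regular graph $G$ and write $\Delta':=\bigl(1+4\sqrt{q\ln\Delta/\Delta}\bigr)\tfrac{\Delta}{q}$. By \cref{lem:partition} there is a partition $V_1,\dots,V_{q^2}$ of $V(G)$ such that, for every $j\in [r]$, the induced subgraph $G[\bigcup_{i\in S_j}V_i]$ has maximum degree at most $\Delta'$, and in particular its local boxicity is at most $\lbox(\Delta')$. Feeding this partition into \cref{lem:groupslbox} with $s=q^2$, $k=q$ and the Steiner system $\mathcal S_q$, I obtain
\[
    \lbox(G)\le \tfrac{s-1}{k-1}\cdot \max_{j\in [r]}\lbox\bigl(G[\textstyle\bigcup_{i\in S_j}V_i]\bigr)\le (q+1)\cdot \lbox(\Delta'),
\]
which is exactly the desired inequality. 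Taking the supremum over all $\Delta$-regular graphs $G$ on the left-hand side then gives the stated bound on $\lbox(\Delta)$.

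I do not anticipate any genuine obstacle here: the corollary is essentially just the composition of the two preceding lemmas with the affine-plane Steiner system, and the only mild subtlety — that \cref{lem:partition} is stated for $\Delta$-regular graphs whereas $\lbox(\Delta)$ ranges over graphs of maximum degree at most $\Delta$ — is resolved cleanly by the Erd\H{o}s--Kelly embedding combined with monotonicity of $\lbox$ under induced subgraphs.
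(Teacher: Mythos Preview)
Your proposal is correct and matches the paper's proof essentially line for line: the paper also reduces to $\Delta$-regular graphs via monotonicity of local boxicity under induced subgraphs and the embedding of bounded-degree graphs into regular graphs (citing \cite{MR13} rather than Erd\H{o}s--Kelly), and then applies \cref{lem:groupslbox} with $k=q$, $s=q^2$ together with \cref{lem:partition}.
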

\begin{proof}
 As every graph of maximum degree at most $\Delta$ is an induced
 subgraph of some $\Delta$-regular graph  (see for instance Section
 1.5 in~\cite{MR13}) and local boxicity is monotone under taking
 induced subgraph, it is enough to prove that every $\Delta$-regular
 graph has local boxicity at most $(q+1)\cdot \lbox\big(\big(1+4\sqrt{\tfrac{q\ln \Delta}{\Delta}}\big)\tfrac{\Delta}{q}\big)$.
 
 This is a direct consequence of~\cref{lem:groupslbox} (with $k=q$ and
 $s=q^2$), combined with~\cref{lem:partition}.
\end{proof}

We are now ready to prove Theorem~\ref{thm:main}. 

\begin{proof}[Proof of Theorem~\ref{thm:main}]
  By \cref{cor:delta} it is enough to prove the upper bound.

Define the function
\begin{equation*}
\alpha:t\in [2, +\infty) \mapsto \prod_{i\ge 1} \left(1+\tfrac{18}{\ln^2(t^{3^i/2^i})}\right)^{-1} = \prod_{i\ge 1} \left(1+\tfrac{18\cdot 4^i}{9^i \ln^2 t}\right)^{-1}\in (0,1).
\end{equation*}

Note that $\alpha$ is well defined since for every fixed $t\ge 2$ we have
\begin{equation*}
\ln\left(\prod_{i\ge 1} \left(1+\tfrac{18\cdot 4^i}{9^i \ln^2 t}\right)\right) = \sum_{i\ge 1} (1+o_i(1)) \tfrac{18\cdot 4^i}{9^i \ln^2 t},
\end{equation*}
which converges to some real number. Note that $\alpha$ is an
increasing function and for every $\Delta\ge 2$ we have
$\alpha(\Delta) < 1$ (since $\alpha(\Delta)$ is defined as a
product of factors in the interval $(0,1)$). Moreover,  for every $\Delta\ge 4$,
\[
  \left(1+\tfrac{18}{\ln^2
      \Delta}\right)\alpha(\Delta^{2/3})=\left(1+\tfrac{18}{\ln^2
      \Delta}\right)\prod_{i\ge 1} \left(1+\tfrac{18}{\ln^2(\Delta^{3^{i-1}/2^{i-1}})}\right)^{-1} 
  =\alpha(\Delta) .
  \]

  We now prove that
 \begin{equation}\tag{$*$}\label{eq:ind}
    \text{there is a constant } C\ge 1 \text{ such that for every  }\Delta \ge 2, \lbox(\Delta)\le
    C\alpha(\Delta)\cdot \Delta.
  \end{equation}
  
We argue by induction on $\Delta$. First, using any existing bound on
the (local) boxicity of graphs of bounded degree, for any fixed
$\Delta_0$ (to be chosen later) one may find $C\ge 1$ such that for every
$\Delta \in[2, \Delta_0]$, $\lbox(\Delta)\le C\alpha(\Delta)\cdot \Delta$. 
Now, suppose that for some $\Delta_1\ge \Delta_0$, \eqref{eq:ind} holds for every $\Delta\le \Delta_1-1$. We prove \eqref{eq:ind}  for $\Delta = \Delta_1$. By choosing $\Delta_0$ so that $\Delta_0/\ln \Delta_0\ge 3275^2$, we have by~\cref{cor:lboxdelta} that 
\begin{equation}\label{eq:2}
\lbox(\Delta)\le (q+1)\cdot\lbox\left(\left(1+4\sqrt{\tfrac{q\ln
        \Delta}{\Delta}}\right)\tfrac{\Delta}{q}\right),
\end{equation}

with $q^2\in \left[\tfrac{\Delta}{\ln \Delta}, \tfrac{\Delta}{\ln \Delta}\left(1+\tfrac{7}{\ln^2(\Delta/\ln \Delta)}\right)\right]$.
Notice that
\begin{equation}\label{eq:2.5}
    \tfrac{\Delta}{q}\le \sqrt{\Delta \ln \Delta}
\end{equation}
and
\begin{equation}\label{eq:3}
q+1\le \left(\sqrt{\tfrac{\Delta}{\ln\Delta}\left(1+\tfrac{7}{\ln^2(\Delta/\ln \Delta)}\right)}\right) + 1\le \sqrt{\tfrac{\Delta}{\ln\Delta}}\left(1+\tfrac{7}{\ln^2(\Delta/\ln \Delta)}\right) + 1.
\end{equation}

Moreover, by choosing $\Delta_0$ large enough, we obtain that for every $\Delta\ge \Delta_0$
\begin{equation}\label{eq:4}
\sqrt{\tfrac{\Delta}{\ln\Delta}}\left(1+\tfrac{7}{\ln^2(\Delta/\ln \Delta)}\right) + 1\le \sqrt{\tfrac{\Delta}{\ln\Delta}}\left(1+\tfrac{8}{\ln^2\Delta}\right)+1\le \sqrt{\tfrac{\Delta}{\ln\Delta}}\left(1+\tfrac{9}{\ln^2\Delta}\right),
\end{equation}
and furthermore,
\begin{equation}\label{eq:5}
1+4\sqrt{\tfrac{q\ln \Delta}{\Delta}}\le 1+4\sqrt{2\sqrt{\tfrac{\ln \Delta}{\Delta}}}\le 1+\tfrac{8}{\ln^2 \Delta}.
\end{equation}

By combining \eqref{eq:2.5}, \eqref{eq:3},
\eqref{eq:4} and \eqref{eq:5} with the induction hypothesis (and using
the fact that $\alpha$ is an increasing function),  we obtain that for $\Delta_0$ large enough
\begin{eqnarray*}
\lbox(\Delta) & \le &C \left(1+\tfrac{8}{\ln^2
                      \Delta}\right)\left(1+\tfrac{9}{\ln^2
                      \Delta}\right)\alpha\left((1+4\sqrt{q\ln
                      \Delta/\Delta})\sqrt{\Delta\ln \Delta}\right)
                      \Delta \\ &\le& C \left(1+\tfrac{18}{\ln^2 \Delta}\right)\alpha(\Delta^{2/3}) \Delta=C\alpha(\Delta)\Delta,
\end{eqnarray*}
which completes the proof of \eqref{eq:ind}. Since $\alpha(\Delta)\le 1$ for any $\Delta\ge 2$ and
$\lbox(1)=1$ we obtain that $\lbox(\Delta)\le C \Delta$ for any
$\Delta\ge 1$, as desired.
\end{proof}

We now explain how to deduce \cref{thm:edges} from \cref{thm:main}.

\begin{proof}[Proof of \cref{thm:edges}]
  By \cref{cor:edges}, it suffices to prove the upper bound.
Let $S$ be the set of vertices of degree at least $\sqrt{m}$. Then
$G-S$  has maximum degree $\sqrt{m}$, and thus local boxicity
$O(\sqrt{m})$ by \cref{thm:main}. On the other hand we have $|S|\cdot
\sqrt{m}\le 2m$ and thus $|S|\le 2\sqrt{m}$. It then follows from
\cref{obs:g-v} that $G$ has local boxicity at most $\lbox(G-S)+2\sqrt{m}=O(\sqrt{m})$.
\end{proof}

\section{Proof of \texorpdfstring{\cref{thm:gnp}}{Theorem 1.3}}\label{sec:RG}

A \emph{multicyclic component} in a graph $G$ is a connected component
of $G$ that contains at least two cycles. We will need a
preliminary lemma, which may be found in a more general form as Lemma
2.10 in \cite{FK1}.

\begin{lemma}[Lemma 2.10 in \cite{FK1}]\label{lem 2.10}
Let $p = c/n$ with $c < 1$. Then, the probability that a graph $G\in \mathcal{G}(n,p)$ contains a multicyclic  component is at most $\tfrac{2}{(1-c)^3 n}$.
\end{lemma}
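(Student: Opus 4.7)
My plan is to bound the probability by the expected number of multicyclic components via Markov's inequality, and then expand
\[
\mathbb E[\#\text{multicyclic components}] = \sum_{k \ge 4} \binom{n}{k}\, \mathbb P\bigl(S_k \text{ is the vertex set of a multicyclic component}\bigr),
\]
where $S_k$ denotes any fixed $k$-subset of $[n]$. For $S_k$ to host a multicyclic component, two things must occur: the induced subgraph $G[S_k]$ is connected with at least $k+1$ edges, and no edge of $G$ joins $S_k$ to its complement. The second event has probability $(1-p)^{k(n-k)} \le e^{-ck(n-k)/n}$, which for $k$ not too close to $n$ behaves like $e^{-ck}$ and supplies the crucial decay.

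For the first event I would use a union bound over witnesses: if $G[S_k]$ is connected with $\ge k+1$ edges, then it contains a spanning tree together with at least two additional edges. By Cayley's formula there are $k^{k-2}$ spanning trees on $S_k$, and at most $\binom{\binom{k}{2}}{2}$ choices of two extra edges; each configuration lies in $G$ with probability $p^{k+1}$. Combining everything and applying Stirling to $\binom{n}{k}$ and $k!$, the whole sum collapses to an expression of the form
\[
\frac{C}{n}\sum_{k \ge 4}\mathrm{poly}(k)\,(c\,e^{1-c})^{k}.
\]
Since $c\,e^{1-c} < 1$ whenever $c<1$, the series converges and yields a bound of the shape $O_{c}(1/n)$; the contribution from $k$ close to $n$ is handled separately using the fact that $(1-p)^{k(n-k)}$ is then already exponentially small in $n$.

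The main obstacle is getting the sharp constants $2$ and $(1-c)^{-3}$. My crude enumeration (spanning tree times an unordered pair of extra edges) over-counts each bicyclic graph many times, and feeding it through Stirling produces an exponent closer to $(1-c)^{-5}$. To recover $(1-c)^{-3}$ I would either replace the naive count by Wright's asymptotic formula for the number of connected graphs on $k$ vertices with exactly $k+1$ edges (for which the sharp constant $\tfrac{5}{24}$ eventually translates into the $2$ and the correct power of $1-c$), or couple the component exploration of $\mathcal G(n,p)$ with a subcritical Galton--Watson branching process with $\mathrm{Poisson}(c)$ offspring distribution and directly bound the expected number of "excess" edges produced during this exploration. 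This is essentially the calculation carried out in Frieze--Karoński, and since the statement appears there in a more general form, I expect the authors to simply cite it rather than repeat it.
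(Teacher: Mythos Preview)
Your expectation in the final sentence is exactly right: the paper does not prove this lemma at all but simply imports it verbatim from Frieze--Karo\'nski~\cite{FK1}, where it appears (in a more general form) as Lemma~2.10. There is therefore no ``paper's own proof'' to compare against, and your sketched first-moment argument---count potential multicyclic components via Cayley's formula plus extra edges, multiply by the isolation probability $(1-p)^{k(n-k)}$, and sum a series dominated by $(ce^{1-c})^k$---is indeed the standard route taken in that reference, with the sharper constants coming from a more careful enumeration than your spanning-tree-plus-two-edges overcount.
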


We are now ready to prove \cref{thm:gnp}.

\begin{proof}[Proof of \cref{thm:gnp}]
The lower bound was already proved in \cref{cor:LB RG}, so we
concentrate on the upper bound. 
If $np\gg \ln n$, then it is simple consequence of Chernoff's inequality that the maximum degree of the random graph $G\in \mathcal
G(n, p)$ is at most $2np$ a.a.s.\ (see for example Theorem 3.4 in
\cite{FK1}). Thus, in this regime, the upper bound follows directly from Theorem~\ref{thm:main}.

Fix an arbitrary small real
number $\varepsilon > 0$. We now concentrate on the regime $np\in [1-\varepsilon, \ln^2 n]$ (here we choose $\ln^2 n$ so to have $\ln n\ll \ln^2 n\ll n^{1/3}$). Partition the vertex set of $G$ in $\lceil 2(1+\varepsilon) np\rceil$ subsets uniformly at random (that is, every vertex is put into each of the $\lceil 2(1+\varepsilon) np\rceil$ subsets with the same probability, and independently from the other vertices). By Lemma~\ref{lem:chernoff}, the number of vertices in any of the $\lceil 2(1+\varepsilon) np\rceil$ subsets is at most $\tfrac{(1+\varepsilon/2) n}{\lceil 2(1+\varepsilon)np\rceil}$ with probability $1 - \exp(-\Theta(n/np)) = 1 - o(1/n)$. We condition on this event. Thus, the union of every two subsets induces a binomial random graph with parameters $n'\le 2\tfrac{(1+\varepsilon/2) n}{\lceil 2(1+\varepsilon) np\rceil}$ and
\[
    p \le \tfrac{1}{2(1+\varepsilon) n/\lceil 2(1+\varepsilon) np\rceil} = \tfrac{(1+\varepsilon/2)/(1+\varepsilon)}{2(1+\varepsilon/2) n/\lceil 2(1+\varepsilon) np\rceil} = \tfrac{(1+\varepsilon/2)/(1+\varepsilon)}{n'}.
\]

By Lemma~\ref{lem 2.10} for $n = n'$ and $c = (1+\varepsilon/2)/(1+\varepsilon)$ and a direct union bound we conclude that the probability that there is a multicyclic component in the graph induced by the union of any two of the above subsets of vertices is $O\left(\tfrac{(np)^2}{(n/np)}\right) = O(n^2p^3)$.

All in all, the probablity that the union of some two subsets of
vertices induces a graph with a multicyclic component is at most 
\[
\lceil 2(1+\varepsilon)np\rceil \cdot o\left(\tfrac{1}{n}\right) +
O(n^2p^3) \left(1-\lceil 2(1+\varepsilon)np\rceil \cdot o\left(\tfrac{1}{n}\right)\right) = o(1).
\]

Moreover, graphs without multicyclic  components have boxicity at most
two (and therefore, local boxicity at most two as well). The proof is
completed by~\cref{lem:groupslbox}, applied with $k=2$, $s=\lceil
2(1+\varepsilon) np\rceil$, and choosing $C_4>0$ such that $\lceil
2(1+\varepsilon) np\rceil \le C_4 np$ for all $np\in [1-\varepsilon, \ln^2 n]$ (noting that for
any integer $\ell$, the
edge-set of the complete graph $K_\ell$ forms a Steiner system with
parameters $(2,2,\ell)$).
\end{proof}

\section{Local boxicity of graphs of bounded genus}\label{sec:genus}


This section is dedicated to an analogue of the following result of
Scott and Wood~\cite{SW} for local boxicity.

\begin{theorem}[Theorem 16 in \cite{SW}]\label{SW Thm}
The boxicity of any graph with Euler genus at most $g$ is at most $(12+o(1))\sqrt{g\ln g}$.
\end{theorem}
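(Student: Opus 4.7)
The plan is to bound the boxicity via a general result on graphs of bounded maximum degree, after reducing to this case by deleting a small set of high-degree vertices. The reduction relies on Euler's formula.

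\smallskip

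\emph{Step 1 (Degree threshold via Euler).} By Euler's formula, every $n$-vertex graph of Euler genus at most $g$ (with $n\geq 3$) has at most $3(n+g-2)$ edges, hence the sum of its degrees is at most $6(n+g-2)$. Fix a threshold $\Delta_0 \sim \sqrt{g/\ln g}$, and let $S$ be the set of vertices of $G$ of degree at least $\Delta_0$. By handshaking, $|S|\leq 6(n+g)/\Delta_0$, and in the representative regime $n=O(g)$ this is $O(\sqrt{g\ln g})$. The graph $G-S$ has maximum degree strictly less than $\Delta_0$.

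\smallskip

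\emph{Step 2 (Apply Scott--Wood for bounded degree).} A separate result of Scott and Wood~\cite{SW} asserts that every graph of maximum degree $\Delta$ has boxicity at most $(2+o(1))\Delta \ln^{1+o(1)}\Delta$. Applied to $G-S$ with $\Delta\leq \Delta_0 =\sqrt{g/\ln g}$, the $1/\sqrt{\ln g}$ factor in $\Delta_0$ combines with $\ln \Delta_0\sim \tfrac12 \ln g$ to give $\boxi(G-S)\leq (1+o(1))\sqrt{g\ln g}$. Then, applying \cref{obs:g-v} iteratively over the vertices of $S$, one concludes that $\boxi(G)\leq \boxi(G-S)+|S|=O(\sqrt{g\ln g})$.

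\smallskip

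\emph{Main obstacles.} First, the case $n \gg g$ must be handled separately, since the handshake bound on $|S|$ no longer yields $|S|=O(\sqrt{g\ln g})$ in that regime; one would need an independent argument showing that in the very sparse range $n\gg g$ the graph is close to planar and its boxicity is already small (invoking a theorem of Thomassen-type for sparse near-embeddable graphs). Second, pinning down the precise leading constant $12+o(1)$ claimed in the statement requires careful tracking of the constants in Euler's formula, in the optimal choice of $\Delta_0$, and in the Scott--Wood probabilistic construction of interval graphs; a direct substitution as above only yields $O(\sqrt{g\ln g})$ with an unspecified multiplicative constant, and making it quantitative is where most of the technical work lies.
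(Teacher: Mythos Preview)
This statement is quoted from \cite{SW} and is not proved in the present paper; the paper only recalls the skeleton of Scott--Wood's argument (the three Claims inside the proof of \cref{thm:genus}) in order to adapt it to local boxicity. That skeleton is quite different from what you propose, and your route has a real gap.

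Scott--Wood do not proceed via a degree threshold applied to $G$. Their key input (Claim~1 in the proof of \cref{thm:genus}) is a topological fact: every graph of Euler genus $g$ contains a set $X$ of at most $60g$ vertices with $\boxi(G\setminus X)\le 5$, where the bound $|X|\le 60g$ is \emph{independent of $n$}. They then write $G=G_1\cap G_2\cap G_3$ and handle the dense piece $G_3=G\langle X\cup Z\rangle$ by a further partition $A\cup B$ with $|B|\le 6g/(k-6)$ (Claim~3); the $\sqrt{g\log g}$ arises from optimising over $k$ and using the crude bound $\boxi(H\langle B\rangle)\le |B|$, not from the maximum-degree boxicity theorem.

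Your Step~1 is where the argument fails. The handshake bound gives only $|S|\le 6(n+g)/\Delta_0$, and $|S|$ is genuinely unbounded as a function of $g$ alone: take any graph $H$ of Euler genus $g$ and form its disjoint union with $N$ copies of the star $K_{1,\Delta_0}$. The resulting graph still has Euler genus $g$, but the $N$ star centres all have degree $\Delta_0$, so $|S|\ge N$ for $N$ as large as you like. Your ``Main obstacles'' paragraph treats the regime $n\gg g$ as a side case in which ``the graph is close to planar and its boxicity is already small'', but that is exactly backwards: adding planar components to an arbitrary genus-$g$ graph keeps the genus at $g$ while making $n$ arbitrarily large, so bounding the boxicity in that regime \emph{is} the original theorem. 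The whole point of Scott--Wood's Claim~1 is to produce a deletion set whose size depends only on $g$, and this requires topological input (essentially, finding a small set whose removal leaves a planar graph) that no amount of degree counting on $G$ can supply.
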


For this we only need to modify a single step of their proof, where
they use the fact that $n$-vertex graphs have boxicity at most $n$. Instead, we
use the following result.

\begin{theorem}[Theorem 12 in \cite{MM}]\label{thm:MM}
Any graph on $n$ vertices has local boxicity at most $24\cdot \tfrac{n}{\log n}$.
\end{theorem}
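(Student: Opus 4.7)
My plan is to prove the bound $\lbox(G) \le 24n/\log n$ by strong induction on $n$, using \cref{lem:groupslbox} combined with the affine-plane Steiner systems from \cref{obs:affine} and the prime-gap estimate of \cref{cor:dus}. The base case will cover all graphs on at most some universal constant number of vertices, for which the bound will follow from the trivial estimate $\lbox(G) \le \boxi(G) \le n/2$ (or from any standard bound such as $\lbox(G)\le n-1$).

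For the inductive step I will pick a prime $q$ of appropriate size via \cref{cor:dus} and partition $V$ arbitrarily into $s = q^2$ parts $V_1, \ldots, V_s$ of nearly equal size $\lceil n/s\rceil$. Applying \cref{lem:groupslbox} with the $(2,q,q^2)$-Steiner system produced by the affine plane over $\mathbb{F}_q$ will yield
\[
\lbox(G) \le (q+1)\cdot \max_j \lbox\bigl(G\bigl[\textstyle\bigcup_{i \in S_j} V_i\bigr]\bigr),
\]
where each subgraph on the right-hand side has at most $qn/s+O(1) = n/q + O(1)$ vertices. By the induction hypothesis, each such subgraph will have local boxicity at most $24(n/q)/\log(n/q)$, so
\[
\lbox(G) \le \frac{24\, n\,(1 + 1/q)}{\log n - \log q}.
\]
I plan to choose $q$ to be polylogarithmic in $n$ (for instance $q\approx \sqrt{\log n}$, so $s\approx \log n$), which will make both $1/q$ and $\log q / \log n$ tend to $0$, so the multiplicative loss per recursive step will be of order $1 + o(1)$.

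Closing the induction with the exact constant $24$ will be the main obstacle: the $(1+o(1))$ factor per step does not quite yield $24n/\log n$ on the nose. To overcome this I would strengthen the induction by proving $\lbox(G) \le 24n/\log n - g(n)$ for a suitable correction term $g(n)$ absorbing the lower-order perturbations coming from the $\log q$ and $1/q$ terms. An alternative route is to combine the Steiner-system reduction with the bounded-degree bound \cref{thm:main}: once the recursion has cut the number of vertices to a polylogarithmic value, the corresponding subgraph has polylogarithmic maximum degree and therefore polylogarithmic local boxicity, which is dominated by the main term. Either way, the delicate point will be controlling how the perturbations $1+o(1)$ accumulate over the $O(\log\log n)$ levels of the recursion, and matching this against a base case that is loose enough to offset them while still giving $24n/\log n$ at the top level.
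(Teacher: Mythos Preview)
First, a clarification: the paper does not prove \cref{thm:MM}. It is quoted from~\cite{MM} and used as a black box in the proof of \cref{thm:genus}, so there is no ``paper's own proof'' to compare against.

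That aside, your approach has a structural gap that is more serious than the difficulty with the constant~$24$. Unrolling your recursion from $n$ down to a base level $n_k$ gives
\[
\lbox(G)\;\le\;\prod_{i=0}^{k-1}(q_i+1)\cdot f(n_k)\;=\;\frac{n}{n_k}\cdot\prod_{i=0}^{k-1}\Bigl(1+\tfrac1{q_i}\Bigr)\cdot f(n_k),
\]
since $\prod_i q_i=n/n_k$. Whatever you feed in at the bottom---the trivial bound $f(n_k)\le n_k/2$, or $f(n_k)\le C_1(n_k-1)$ via \cref{thm:main} (which is no better, as an $m$-vertex graph may have maximum degree $m-1$)---the right-hand side is at least a constant times~$n$. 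The saving of $1/\log n$ is nowhere produced: each application of \cref{lem:groupslbox} with a $(2,q,q^2)$-system trades a multiplier $q+1$ against a shrink of only $q$ in the vertex count, so it \emph{loses} rather than gains. Equivalently, your displayed bound $24n(1+1/q)/(\log n-\log q)$ exceeds $24n/\log n$ for every $q\ge2$, so the induction cannot close for \emph{any} constant in place of~$24$; and the telescoping product $\prod_i \log n_i/\log n_{i+1}=\log n/\log n_k$ shows that the accumulated loss from this term alone is already $\Theta(\log n)$. Your strengthening $24n/\log n-g(n)$ cannot absorb a loss that grows with~$n$ in this way.

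The $O(n/\log n)$ bound in~\cite{MM} comes from a direct construction exploiting that $n$ vertices can be encoded with $O(\log n)$ bits, not from a Steiner-system recursion; the tool \cref{lem:groupslbox} is inherently incapable of producing a bound sublinear in the vertex count.
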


For the sake of completeness, we recall the main steps
of the proof of Theorem 16 in \cite{SW}, and how we implement the modification.

\begin{proof}[Proof of \cref{thm:genus}]
By \cref{cor:genus} it is enough to prove the upper bound.
We closely follow the proof of Theorem 16 in \cite{SW}. The proofs of
the claims below can be found there.

\begin{claim}[\cite{SW}]
$G$ contains a set $X$ of at most $60 g$ vertices, for which $\boxi(G\setminus X)\le 5$.
\end{claim}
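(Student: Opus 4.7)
The plan is to build $X$ through an iterative genus-reduction process. Fix an embedding of $G$ on a surface $\Sigma$ of Euler genus at most $g$. At each step I would identify a constant-size subset of vertices of the current graph whose removal reduces the Euler genus of the embedding by at least one. After at most $g$ such steps the resulting graph is planar, and then Thomassen's theorem gives $\boxi(G\setminus X)\le 3\le 5$. The slack between the $3$ coming from Thomassen and the $5$ claimed here is convenient: it leaves room to terminate the iteration slightly before reaching full planarity if that simplifies the combinatorics, provided we still land in a class with boxicity at most $5$.

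The core ingredient I would establish is the following lemma: for every graph $H$ that $2$-cell embeds on a surface $\Sigma'$ of Euler genus $g'\ge 1$, there is a set $Y\subseteq V(H)$ with $|Y|\le 60$ such that $H\setminus Y$ embeds on a surface of strictly smaller Euler genus. The natural choice is to take $Y$ to be the vertex set of a noncontractible cycle $C$ of $H$: cutting $\Sigma'$ along $C$ strictly reduces the Euler genus, and deleting $V(C)$ from $H$ realizes this topological operation at the graph level. The substantive task is to bound $|V(C)|$ by a universal constant. If the face-width of the embedding is at most $60$, then $H$ admits a noncontractible face-tracing cycle of length at most the face-width and we are done. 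If the face-width exceeds $60$, I would use a Mohar-style argument to locate a bounded-size ``handle-trapping'' subgraph: in a high face-width embedding, one can find a bounded-radius disk cover of a noncontractible loop, contract spanning subtrees, and extract a bounded-size noncontractible subgraph whose vertices form $Y$.

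Granted this lemma, the conclusion is a short induction. Set $H_0 := G$ embedded on the genus-$g$ surface. While $H_i$ is not planar, pick $Y_i$ of size at most $60$ by the lemma and let $H_{i+1}:=H_i\setminus Y_i$; the Euler genus of the relevant embedding drops by at least $1$ per step, so after at most $g$ iterations we reach a planar $H_r$ with $r\le g$. Taking $X:= Y_0\cup\cdots\cup Y_{r-1}$ gives $|X|\le 60g$ and $G\setminus X = H_r$ is planar, hence has boxicity at most $3$.

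The main obstacle is the genus-reduction lemma in the large face-width regime, where no short noncontractible cycle need exist and one has to extract a bounded-size surface-topologically significant substructure by hand. All other steps (the induction, the appeal to Thomassen, the additivity of the vertex cost across the $g$ rounds) are routine once the lemma is in place, and the precise constant $60$ is a matter of tightening the face-width dichotomy inside the key lemma.
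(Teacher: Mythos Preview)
The paper does not prove this claim; it is lifted from~\cite{SW} with the proof explicitly deferred there. Nonetheless, your proposed route has a genuine gap.

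Your key lemma---that every $2$-cell embedded graph of positive Euler genus admits a set $Y$ of at most $60$ vertices whose removal strictly lowers the Euler genus---is false. The $n\times n$ toroidal grid, in its standard embedding on the torus, has face-width $n$; deleting any $60$ vertices leaves an inherited embedding of face-width at least $n-60$, and for large $n$ such an embedding is still genus-minimal. So the grid minus $60$ vertices still has Euler genus $2$. No ``Mohar-style'' extraction of a bounded handle-killing vertex set exists here, precisely because every noncontractible closed curve on the torus meets the remaining graph in at least $n-60$ points. The high face-width case you flag as ``the main obstacle'' is not an obstacle to be overcome but an outright counterexample to the lemma.

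The slack between Thomassen's $3$ and the claimed $5$ is not cosmetic; it is where the argument actually lives. A workable iteration removes the vertices met by a short noncontractible curve \emph{only while one exists}, dropping the genus each round at a cost of $O(1)$ vertices, and \emph{halts} once the face-width becomes large. At that point a separate structural input is needed---one that gives $\boxi\le 5$ directly for graphs with an embedding of sufficiently large face-width---and the iteration never reaches planarity. By fixing planarity as the terminal target you are forced into the unconditional genus-drop lemma, which cannot hold.
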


If $|X|\le 10^4$, then 
\[
    \lbox(G)\le \boxi(G)\le \boxi(G\setminus X)+|X|\le 5+10^4,
\]
and the theorem holds.\par

If $|X| > 10^4$, fix $G_1 = G\langle V\setminus X\rangle$. Let $Y$ be
the set of vertices in $V\setminus X$ with at most two neighbours in
$X$, and let $Z$ be the set of vertices in $V\setminus X$ with more
than two neighbours in $X$. Define $G_2 = G\langle X,Y\rangle$ and
$G_3 = G\langle X\cup Z\rangle$. Then $G=G_1\cap G_2\cap G_3$, and
thus $\lbox(G)\le \lbox(G_1)+\lbox(G_2)+\lbox(G_3)$.

\begin{claim}[\cite{SW}]
$\mathrm{box}(G_1) = \mathrm{box}(G[V\setminus X])\le 5$ and $\mathrm{box}(G_2)\le 48\ln\ln(1000 g)$.
\end{claim}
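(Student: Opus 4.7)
The plan is to treat the two bounds separately, since they are proved by quite different arguments.

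For the bound $\boxi(G_1)\le 5$, I would observe that $G_1 = G\langle V\setminus X\rangle$ is, by construction, the graph obtained from $G$ by turning every vertex of $X$ into a universal vertex. In any box representation, a universal vertex can be assigned the interval $\mathbb{R}$ in each coordinate without changing the intersection pattern, yielding the boxicity analogue of~\cref{obs:angle}: $\boxi(G_1) = \boxi(G[V\setminus X])$. Combined with the previous claim that $\boxi(G\setminus X)\le 5$ (guaranteed by the choice of $X$), this closes the first part in one line.

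For the bound $\boxi(G_2)\le 48\ln\ln(1000g)$, I would observe that $G_2$ differs from a graph with $V$ fully adjacent only on the bipartite part between $X$ and $Y$, where the edges are exactly the edges of $G$ with one endpoint in $X$ and one in $Y$. By the definition of $Y$, every vertex of $Y$ has at most two neighbours in $X$, so the relevant bipartite structure has maximum degree at most $2$ on the $Y$-side. The plan is then to invoke a standard boxicity bound of the form $\boxi(H)=O(\Delta\log\log n)$ for a bipartite graph $H$ in which one side has maximum degree $\Delta$ and the other side has $n$ vertices (this kind of bound is typically obtained by greedily covering such a bipartite graph by $O(\log\log n)$ interval-like orderings, using e.g.\ probabilistic constructions for small-chromatic-number decompositions). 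Applying this with $\Delta = 2$ and $n \le |X| \le 60g$ gives a bound of order $\log\log g$; a careful bookkeeping of multiplicative constants accounts for the specific $48$ and for the $1000$ inside the double logarithm.

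The main obstacle is precisely the second part: one has to isolate the correct auxiliary boxicity lemma for bipartite graphs of bounded degree on one side, and to check that its leading constants and the size parameter can be arranged to produce $48\ln\ln(1000g)$ rather than some worse constant. A subsidiary point is that it is the size of $X$, not the size of $V(G)$, that enters the logarithm; this is essential because $|V(G)|$ is not controlled by $g$, whereas $|X|\le 60g$ is, and it is what makes the final estimate a function of $g$ alone.
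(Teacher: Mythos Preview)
The paper does not give its own proof of this claim: it quotes the statement from Scott and Wood~\cite{SW} and explicitly says ``the proofs of the claims below can be found there.'' So there is no in-paper argument to compare your sketch against; the paper's approach is simply to cite~\cite{SW}.

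On the merits of your sketch: the first part is exactly right, and is just the boxicity analogue of \cref{obs:angle} combined with the first claim. For the second part, your identification of the relevant structure---a bipartite piece between $X$ and $Y$ with each $y\in Y$ having at most two neighbours in $X$, and $|X|\le 60g$---is correct and is indeed what drives the $\log\log g$ bound in~\cite{SW}. However, the auxiliary result you call a ``standard boxicity bound of the form $\boxi(H)=O(\Delta\log\log n)$'' is not an off-the-shelf fact, and your parenthetical description of its proof (``greedily covering \ldots\ by $O(\log\log n)$ interval-like orderings'') does not correspond to any standard argument. The actual lemma, together with the specific constants $48$ and $1000$, is one of the technical contributions of~\cite{SW}; reproducing it would require genuine work beyond what you have written. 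In effect, your plan for the second bound amounts to ``invoke the right lemma from~\cite{SW}'', which is precisely what the present paper does.
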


Denote $H = G[X\cup Z]$.

\begin{claim}[\cite{SW}]
For any $k\ge 7$, the vertex set $X\cup Z$ can be partitioned in two subsets, $A$ and $B$, such that $\boxi(H\langle A\rangle)\le (k+2)\lceil 2e\ln(3002g)\rceil$, $\boxi(H\langle A,B\rangle)\le 2+3(k+1)\ln(3002g)$ and $|B|\le \tfrac{6g}{k-6}$.\qed
\end{claim}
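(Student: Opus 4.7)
The plan is to choose $B$ as a small set of vertices of high degree in $H=G[X\cup Z]$ so that the remainder $A=(X\cup Z)\setminus B$ induces a graph of bounded degree, and then bound each of the three quantities in turn.

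First, I would let $B$ be the set of vertices of degree at least $k+1$ in $H$, so that $H[A]$ has maximum degree at most $k$ by construction. Since $H$, as a subgraph of $G$, has Euler genus at most $g$, we have $|E(H)|\le 3|V(H)|+3(g-2)$. A careful double counting of the edges incident to $B$, additionally exploiting the fact that every vertex in $Z$ has at least three neighbours in $X$ (which allows one to tighten the counting by separating the contributions from edges entirely inside $A$ from the rest), should yield $|B|\le 6g/(k-6)$.

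Second, to bound $\boxi(H\langle A\rangle)$, I would appeal to the boxicity analogue of \cref{obs:angle} (universal vertices can be mapped to $\mathbb R$ in every dimension), which gives $\boxi(H\langle A\rangle)=\boxi(H[A])$. Then I would invoke the classical bound $\boxi(G')\le (\Delta+2)\lceil 2e\ln n\rceil$ for $n$-vertex graphs $G'$ of maximum degree $\Delta$ (in the spirit of the bounded-degree result~\cite{Esp}). Here $\Delta\le k$ and $n\le |V(H)|\le 3002g$, where the bound on $|V(H)|$ follows from $|X|\le 60g$ together with a genus-based bound on $|Z|$ obtained by double counting the $X$-$Z$ edges using the fact that every vertex in $Z$ has at least three neighbours in $X$ (yielding $|Z|\le 2|X|+O(g)$).

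The main obstacle will be the bound $\boxi(H\langle A,B\rangle)\le 2+3(k+1)\ln(3002g)$. Two dimensions suffice to realise the cliques on $A$ and on $B$, with the vertices of the opposite side being universal in each of these two dimensions and thus mapped to $\mathbb R$. The remaining task is to cover the non-edges between $A$ and $B$ by $3(k+1)\ln(3002g)$ co-interval graphs on $A\cup B$. Since $H[A]$ has maximum degree at most $k$, each vertex of $A$ has at most $k+1$ neighbours in $B$. A random-ordering covering argument analogous to the one used in the bounded-degree boxicity proof then works: sample $3(k+1)\ln(3002g)$ independent uniformly random orderings of $B$, associate to each a bipartite co-interval graph on $A\cup B$ in which each $a\in A$ is represented by an interval that avoids exactly the positions of its $\le k+1$ neighbours in $B$, and verify via a union bound over the at most $(3002g)^2$ non-edges of $A\times B$ that with positive probability every non-edge is covered in at least one of the sampled co-interval graphs.
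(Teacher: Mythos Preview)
The paper does not supply its own proof of this claim; it is quoted from Scott and Wood~\cite{SW} and closed with a \qed. Your outline is precisely their strategy: let $B$ collect the high-degree vertices of $H$ so that every vertex of $A=(X\cup Z)\setminus B$ has degree at most $k$ in $H$, bound $|B|$ by a genus edge-count, apply the max-degree boxicity estimate $(\Delta+2)\lceil 2e\ln n\rceil$ to $H[A]$ with $n=|V(H)|$, and cover the $A$--$B$ non-edges in $H\langle A,B\rangle$ by two explicit clique dimensions plus $O(k\log|V(H)|)$ random linear orders of $B$.

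One wording slip worth fixing: the implication ``Since $H[A]$ has maximum degree at most $k$, each vertex of $A$ has at most $k+1$ neighbours in $B$'' is a non-sequitur as stated. The bound on neighbours in $B$ holds because every $a\in A$ satisfies $\deg_H(a)\le k$ by the very definition of $A$, which bounds its neighbours everywhere in $H$ and in particular in $B$; the degree inside $H[A]$ says nothing about neighbours in $B$. Also, the $Z$--$X$ adjacency (each $z\in Z$ has at least three neighbours in $X$) is what yields $|V(H)|\le 3002g$ via a bipartite genus edge-count; it is that bound on $|V(H)|$, rather than the $|B|$ calculation itself, where this hypothesis enters.
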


By \cref{obs:angle} and \cref{thm:MM} we have  $\lbox(H\langle B\rangle) =
\lbox(H[B])\le
24\tfrac{6g}{(k-6)\ln\left(\frac{6g}{k-6}\right)}$. By
\cref{obs:angle2}, we deduce that
\begin{align*}
    \lbox(G_3) 
    &=\hspace{0.3em} \lbox(H)\\
    &\le \hspace{0.3em} \lbox(H\langle A\rangle)+\lbox(H\langle A,B\rangle)+\lbox(H\langle B\rangle)\\
    &\le \hspace{0.3em} \boxi(H\langle A\rangle)+\boxi(H\langle A,B\rangle)+\lbox(H\langle B\rangle)\\
    &\le \hspace{0.3em} (1 + 2e(k+2)\ln(3002g)) + (2+3(k+1)\ln(3002g)) + 24\tfrac{6g}{(k-6)\ln\left(\frac{6g}{k-6}\right)}.
\end{align*}
Choosing $k = \left\lfloor \frac{4\sqrt{g}}{\ln g}\right\rfloor$, we obtain
\[
    \boxi(G_3)\le (4(2e+3)+24\cdot 6/4+o(1)) \sqrt{g} < (70+o(1))\sqrt{g}.
\]

We deduce that 
\[
\lbox(G)\le  \boxi(G_1)+\boxi(G_2)+(70+o(1))\sqrt{g} = (70+o(1))\sqrt{g}.
\]
This concludes the proof.
\end{proof}

\section{Proofs of \texorpdfstring{\cref{thm:avgdeg}}{Theorem 1.5} and \texorpdfstring{\cref{thm:gcreg}}{Theorem 1.6}}\label{sec:gc}

We start with the following simple observation.

\begin{observation}\label{obs:g5}
Every co-interval graph of girth at least five is a forest in which
each connected component has diameter at most 3. On the other hand, every tree of diameter at most 3 is a co-interval graph.
\end{observation}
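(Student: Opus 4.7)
The plan is to exploit the fact that the complement of any co-interval graph is a chordal graph. Chordality of $H^c$ means $H^c$ contains no induced $C_k$ for $k\ge 4$, which, by complementation, translates into $H$ containing no induced $\overline{C_k}$ for $k\ge 4$. In particular, $H$ has no induced $2K_2$ (the case $k=4$, since $\overline{C_4}=2K_2$) and no induced $C_5$ (the case $k=5$, since $\overline{C_5}=C_5$). These are the two forbidden induced subgraphs I plan to use throughout.

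For the forward direction, suppose $H$ is a co-interval graph of girth at least $5$. If $H$ contains a cycle, a shortest cycle $C$ in $H$ is induced and has length $k\ge 5$. The case $k=5$ is ruled out immediately since $H$ has no induced $C_5$. For $k\ge 6$, consider the four vertices $v_1,v_2,v_4,v_5$ in positions $1,2,4,5$ along $C$: since $C$ is induced, the only edges of $H$ among $\{v_1,v_2,v_4,v_5\}$ are the two cycle edges $v_1v_2$ and $v_4v_5$, so these vertices induce a $2K_2$, a contradiction. Hence $H$ is a forest. Similarly, if some connected component of $H$ has diameter at least $4$, a shortest path $v_0v_1v_2v_3v_4$ in the corresponding tree is induced, and the subset $\{v_0,v_1,v_3,v_4\}$ has only the edges $v_0v_1$ and $v_3v_4$ (a tree has no additional chords), which again produces an induced $2K_2$. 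Therefore every component has diameter at most $3$.

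For the reverse direction, let $T$ be a tree of diameter at most $3$. If $T$ has diameter at most $2$, then $T$ is a star $K_{1,n}$ (or smaller), and I would simply assign the common interval $[0,1]$ to every leaf and the disjoint interval $[2,3]$ to the center, which realizes $T^c$ as an interval graph. If $T$ has diameter exactly $3$, then $T$ is a double star with adjacent centers $u,v$, leaves $A=\{a_1,\ldots,a_p\}$ attached to $u$, and leaves $B=\{b_1,\ldots,b_q\}$ attached to $v$. I would set
\[
    I_v=[0,1],\qquad I_{a_i}=[1,3] \text{ for every } i,\qquad I_{b_j}=[2,4] \text{ for every } j,\qquad I_u=[4,5],
\]
and verify that two of these intervals intersect if and only if the corresponding vertices are non-adjacent in $T$. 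This shows $T^c$ is an interval graph, so $T$ is a co-interval graph.

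The proof contains no real obstacle: once the two forbidden induced subgraphs $2K_2$ and $C_5$ of co-interval graphs are identified, the forward direction reduces to short casework on the length of a minimal cycle and on the length of a longest shortest path, while the reverse direction is handled by the explicit interval assignment above.
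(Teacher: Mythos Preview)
Your proof is correct and follows essentially the same approach as the paper. The only cosmetic difference is the choice of forbidden induced subgraph for the forward direction: the paper forbids the path on five vertices (by observing that its complement contains an induced $C_4$), whereas you forbid $2K_2=\overline{C_4}$ directly; both obstructions stem from chordality of interval graphs and lead to identical casework, and your explicit interval representation for the double star is equivalent to the paper's.
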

\begin{proof}
Denote by $P_4$ the path of length four and by $C_5$ the cycle of
length five. Then, $P^c_4$ contains an induced cycle of length four
and $C^c_5$ contains an induced cycle of length five. This shows that
$P^c_4$ and $C^c_5$ are not chordal, and thus not interval
graphs. Since interval graphs are closed under taking induced
subgraphs, this proves the first part of the claim.\par

Any tree $T$ with diameter at most 3 that contains at least one edge consists of two adjacent vertices $u,v$, together with a set $S_u$ of leaves adjacent to $u$ and a set $S_v$ of leaves adjacent to $v$ (where the two sets are possibly empty. Mapping $u$ to $\{0\}$, $v$ to $\{2\}$, all the elements of $S_u$ to $[1,2]$, and all the elements of $S_v$ to $[0,1]$, we obtain an interval representation of $T^c$, as desired. This proves the second part of the claim.
\end{proof}

\begin{proof}[Proof~\cref{thm:avgdeg}]
  Let $d=\ad(G^c)$ and $\ell=\lbox(G)$. By definition of local
  boxicity and~\cref{obs:g5}, the edge-set of $G^c$
  can be covered by a collection
  of trees such that each vertex is contained in at most $\ell$
of these trees. It follows that the edge-set of $G^c$
  can be partitioned into a collection
  of trees such that each vertex is contained in at most $\ell$
of these trees. Each tree can be oriented such that each vertex has
out-degree at most 1, while exactly one vertex has out-degree 0. It
follows that $G^c$ itself has an orientation where each vertex has out-degree
at most $\ell$, while at least one vertex has out-degree less than
$\ell$. Hence, $G^c$ has average degree less than $2\ell$.  It follows
that $\ell>d/2$, and since
$\ell$ is an integer, $\ell\ge \lfloor d/2 +1\rfloor$.
\end{proof}

\begin{lemma}\label{lem:kreglb}
Let $G$ be a graph and $k\in \mathbb N$ be odd. Suppose that $G^c$ is a $k$-regular graph with girth at least 5 that does not contain a perfect matching. Then, 
\begin{equation*}
   \lbox(G)\ge \tfrac{k+3}{2}.
\end{equation*}
\end{lemma}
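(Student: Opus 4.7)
The plan is to argue by contradiction: suppose $\lbox(G) = \ell := (k+1)/2$, and then extract a perfect matching in $G^c$, contradicting the hypothesis.

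First, using the alternative definitions of local boxicity from \cref{sec:prelbox} together with \cref{obs:g5} (applicable since $G^c$ has girth at least $5$), and exactly as in the proof of \cref{thm:avgdeg}, we may turn an $\ell$-local box representation of $G$ into an edge-partition of $G^c$ into trees $T_1,\ldots,T_m$, each of diameter at most $3$, with every vertex of $G^c$ contained in at most $\ell$ of these trees. Writing $t(v) = |\{i : v \in V(T_i)\}|$, the identity
\[
\sum_v t(v) = \sum_i |V(T_i)| = |E(G^c)| + m = \tfrac{kn}{2} + m,
\]
combined with $\sum_v t(v) \le \ell n$, yields $m \le n/2$.

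Next, I classify each tree by its type --- single edge, star $K_{1,p}$ with $p \ge 2$, or double star --- and define its \emph{spine}: the pair of endpoints for a single edge (size $s_T = 2$), the center for a star (size $s_T = 1$), and the pair of non-leaf centers for a double star (size $s_T = 2$). Let $m_2$ be the number of stars $K_{1,p}$ with $p\ge 2$ among the $T_i$'s. The first key step is to observe that every vertex $v$ must lie in the spine of at least one tree containing it: otherwise $v$ would be a degree-$1$ leaf in every tree containing it (so none of those trees would be a single edge), forcing $k=\deg_{G^c}(v)=t(v)\le \ell<k$ (we may assume $k\ge 3$, since if $k=1$ then $G^c$ would itself be a perfect matching, contradicting the hypothesis). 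Pick any function $\phi : V \to \{T_1,\ldots,T_m\}$ sending each $v$ to a tree in whose spine it lies. Then
\[
n = \sum_T |\phi^{-1}(T)| \le \sum_T s_T = 2m - m_2,
\]
which combined with $m \le n/2$ forces $n = 2m$, $m_2 = 0$, and $|\phi^{-1}(T)| = s_T$ for every tree $T$. The last equality means every spine vertex of $T$ is sent to $T$ by $\phi$, so --- since $\phi$ is a function --- each vertex lies in the spine of \emph{exactly one} tree.

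Finally, since $m_2=0$ every tree is a single edge or a double star, and so carries a canonical \emph{spine edge}: the edge itself, or the central edge joining the two non-leaf centers. Its two endpoints are precisely the two spine vertices of $T$, and since distinct trees share no spine vertex, the $m = n/2$ spine edges are pairwise vertex-disjoint and cover all $n$ vertices. Thus they form a perfect matching of $G^c$, contradicting the hypothesis, and so $\lbox(G)\ge\ell+1=\tfrac{k+3}{2}$. The main subtlety will be to handle single-edge trees carefully: their two endpoints have tree-degree $1$ but must be counted as spine vertices (not leaves), for both the ``every vertex lies in some spine'' argument and the identity $\sum_T s_T = 2m - m_2$ to be sharp enough to collapse $m\le n/2$ into $m=n/2$.
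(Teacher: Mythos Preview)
Your proof is correct and takes a genuinely different route from the paper's.

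The paper argues via an orientation: it roots each tree $T$ at a non-leaf $r_T$, orients all edges toward the root, and partitions the vertex set into $D^+$ (out-degree exactly $(k+1)/2$) and $D^-$. From $\sum_v(d^+(v)-d^-(v))=0$ it extracts the two inequalities $|D^+|\ge n/2$ and $|\mathcal T|\le |D^+|$; the diameter-$3$ property then gives $|\mathcal T|\ge |D^+|$, because each tree has at most one non-root vertex of positive in-degree. Tracing the equality cases shows the roots $r_T$ are pairwise distinct and each tree carries a unique $u_T\in D^+$ adjacent to $r_T$, so the edges $r_Tu_T$ form a perfect matching.

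Your argument replaces all of this orientation bookkeeping by two direct counts. The identity $\sum_i|V(T_i)|=\tfrac{kn}{2}+m$ immediately gives $m\le n/2$, and the spine inequality $n\le\sum_T s_T=2m-m_2$ squeezes everything to equality in one stroke, eliminating stars and pinning each vertex to a unique spine. This is shorter and avoids the somewhat delicate ``equality only if the roots are distinct'' step in the paper. Both proofs ultimately exhibit the same matching (the central edges of the diameter-$3$ trees), but your bookkeeping gets there more economically; the paper's orientation viewpoint, on the other hand, makes the extremal structure (which vertices have out-degree exactly $\ell$, which trees contribute roots) more explicit, which is closer in spirit to the upper-bound construction in the proof of \cref{thm:gcreg}.
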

\begin{proof}
Let $n=|V(G)|$. As every 1-regular graph has a perfect matching, we can
assume that $k\ge 3$. Assume for the sake of contradiction that $\lbox(G)<
\tfrac{k+3}2$. Then, $\lbox(G)\le \tfrac{k+1}2$. By definition of local
  boxicity and~\cref{obs:g5}, the edge-set of $G^c$
  can be partitioned into a family $\mathcal T$ of trees of diameter
  at most 3, such that each vertex is contained in at most $\tfrac{k+1}2$
of the trees. In each tree $T\in \mathcal{T}$, pick a vertex $r_T$ which is
not a leaf in $T$
(if no such vertex exists, pick any vertex $r_T$ of $T$), root
$T$ at $r_T$, and orient each edge of $T$ towards the root. Since $T$ is
a tree of diameter at most 3, this orientation of $T$ has the property
that each vertex has out-degree at most 1, and there is at most one
vertex $u$ distinct from the root that has
in-degree at least 1 in $T$. Moreover if such a vertex $u$ exists, it
is adjacent to the root (see \cref{fig:diam3}). This orientation of each tree $T\in
\mathcal{T}$ yields an orientation of $G^c$ with out-degree at most $\tfrac{k+1}2$. Let us denote by $D^+$ the
set of vertices of out-degree $\tfrac{k+1}2$, and define
$D^-=V(G)\setminus D^+$. For each vertex $v$ of $G$, let
$\delta(v)=d^+(v)-d^-(v)$, where $d^+(v)$ and $d^-(v)$ denote respectively the
out-degree and in-degree of $v$ in the orientation of $G^c$ defined
above. Note that vertices $v\in D^+$ satisfy $\delta(v)=1$, while
vertices $v\in D^-$ satisfy $\delta(v)\le -1$. Since $\sum_{v\in
  V(G)}\delta(v)=0$, it follows that $|D^+|\ge |D^-|$ and thus
$|D^+|\ge \tfrac{n}2$ and $|D^-|\le \tfrac{n}2$.

\medskip

\begin{figure}[htb]
 \centering
 \includegraphics{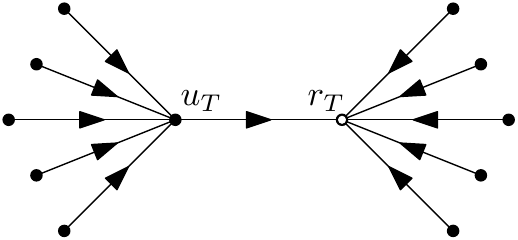}
 \caption{An orientation of a tree of diameter at most 3 towards a
   root $r_T$, which is not a leaf. The vertex $u_T$ is the only vertex
 distinct from $r_T$ with in-degree at least 1.}
 \label{fig:diam3}
\end{figure}

Consider some vertex $v$ which is a root in $i\ge 1$
  trees of $\mathcal{T}$. Then $d^+(v)\le \tfrac{k+1}2-i$ and thus
  $\delta(v)=2d^+(v)-k\le 1-2i\le -1$, with equality only if
  $i=1$. Hence,   $\sum_{v\in
  D^-}\delta(v)\le -|\mathcal{T}|$, with equality only if
  the roots $r_T$ are all distinct. Since $\sum_{v\in
  V(G)}\delta(v)=0$ and $\sum_{v\in
  D^+}\delta(v)=|D^+|$, we obtain $ |\mathcal{T}| \le |D^+|$, with
equality only if the roots $r_T$ are pairwise distinct.

Each vertex $v\in D^+$ has positive in-degree, and
thus $v$ is a non-root with in-degree at least 1 in at least one tree of
$\mathcal{T}$. By the property
of our orientation mentioned above (using the crucial fact that trees of
$\mathcal{T}$ have diameter at most 3), recall that each tree $T$ of $\mathcal{T}$ contains at most one vertex of $D^+$ with in-degree at least
one in $T$. It follows that $|\mathcal{T}|\ge |D^+|$, with equality
only if each tree $T$ of $\mathcal{T}$ contains a unique vertex of $D^+$ with in-degree at least
one in $T$. This shows that
$|\mathcal{T}|=|D^+|$ and the equality implies that the roots $(r_T)_{T\in \mathcal T}$ all distinct and each tree $T$ of $\mathcal{T}$ contains a different vertex of $D^+$ with in-degree at least
one in $T$. Therefore, the function  $T\in
\mathcal{T}\mapsto u_T\in D^+$, where
$u_T$ is the unique vertex of $D^+$ with in-degree at least 1 in
$T$, is a bijection. Note that for every $T\in \mathcal{T}$, $u_T$ and
$r_T$ are adjacent, and since the roots $r_T$ are pairwise distinct, the
set of edges between $u_T$ and $r_T$, for $T\in \mathcal{T}$, forms a
matching in $G^c$. Since $|\mathcal{T}|=|D^+|\ge \tfrac{n}2$,
this
matching is a perfect matching of $G^c$, a contradiction.
\end{proof}

We are now ready to prove \cref{thm:gcreg}.

\begin{proof}[Proof of~\ref{thm:gcreg}.]
By~\cref{thm:avgdeg} and~\cref{lem:kreglb} it is sufficient to prove
the upper bound.
Assume that $G^c$ has an orientation with out-degree at most $d$, for
some integer $d$.  For every vertex $v\in V(G^c)$, define $T_v$ to be the tree
with root $v$ containing all the edges directed towards $v$. Then,
$\bigcup_{v\in G^c} T_v = G^c$ and every vertex participates in
at most $d+1$ trees. By \cref{obs:g5}, each of these trees is a co-interval graph, and thus $G$ has local boxicity at most $d$.

Assume first that $k$ is even. Then, $G^c$ has an
Eulerian orientation, i.e.\ an
orientation in which each vertex has out-degree $k/2$ and in-degree
$k/2$, and the observation above implies that $G$ has local boxicity
at most $k/2+1$, as desired.

If $k$ is odd, then $G^c$ has an orientation in which each vertex has
out-degree at most $\tfrac{k+1}2$ (this can be seen by decomposing
greedily $G^c$ into a union of edge-disjoint cycles and a forest, and
finding an orientation with out-degree at most 1 of each of these
graphs). The observation above then implies that $G$ has local
boxicity at most $\tfrac{k+1}2+1=\tfrac{k+3}2$, which proves the
theorem when $G^c$ has no perfect matching.

Finally, assume that  $k$ is odd and $G^c$ contains some perfect
matching $M$. Then, the graph obtained from $G^c$ by removing all the
edges of $M$ 
has an Eulerian orientation. For each edge $uv\in M$, let $T_{uv}$ be
the tree consisting of the edge $uv$ together with the edges oriented
towards $u$, and the edges oriented towards $v$ (recall that $G^c$ has
girth at least 5, so these edges induce a tree). Then,
$\bigcup_{uv\in M} T_{uv} = G^c$, and every vertex participates in
$\tfrac{k-1}2+1=\tfrac{k+1}2$ trees. By \cref{obs:g5}, each of these trees is a co-interval graph, and thus $G$ has local boxicity at most $\tfrac{k+1}2$, as desired.
\end{proof}

\section{Proofs of \texorpdfstring{\cref{thm:col}}{Theorem 1.7} and \texorpdfstring{\cref{thm:col18}}{Theorem 1.8}}\label{sec:col}

We will need the following two results (for the first one the exact
constants are not given explicitly in \cite{CW} but can be easily deduced from
their proof).

\begin{theorem}[\cite{CW}]\label{thm:CW}
Every graph with boxicity at most two and clique number at most $r$ has chromatic number at most $320 r\log(2r)$.
\end{theorem}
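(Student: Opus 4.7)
The plan is to exploit the characterization of boxicity at most $2$ as the intersection graph of axis-aligned rectangles in the plane (equivalently, as the intersection of two interval graphs $G_1$ and $G_2$, as recalled in \cref{sec:prelbox}). Fix such a rectangle representation $(R_v)_{v\in V(G)}$ with $R_v = I_v^x\times I_v^y$, where $I_v^x$ and $I_v^y$ are the projections onto the $x$- and $y$-axes respectively.

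The first step is a sweepline argument. Order the vertices $v_1,\ldots,v_n$ so that the left endpoints of the intervals $I_{v_i}^x$ are non-decreasing, and consider processing them in this order. When processing $v_i$, every already-processed neighbor $v_j$ (with $j<i$) satisfies that $I_{v_j}^x$ contains the left endpoint $x_i$ of $I_{v_i}^x$, and $I_{v_j}^y$ meets $I_{v_i}^y$. Consequently, restricting to the vertical line $\{x=x_i\}$, the rectangle $R_{v_j}$ appears as a vertical interval, and the family of such intervals together with $I_{v_i}^y$ forms an interval graph whose clique number is at most $\omega(G)=r$. This observation is the starting point of the classical Asplund--Gr\"unbaum approach and already yields a $\chi$-bound of order $r^2$ via a direct induction on $r$.

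To improve the bound to $O(r\log r)$, the idea is to replace the monolithic local interval coloring by a finer dyadic decomposition. Introduce $O(\log r)$ levels indexed by the dyadic scale of the length of $I_v^y$, and split the vertices accordingly into $V_0,V_1,\ldots,V_{\lceil\log(2r)\rceil-1}$. For each level $\ell$, run the sweepline procedure restricted to $V_\ell$: the key point is that within a single level, any clique appearing as active intervals at a sweep position uses vertical intervals of comparable length, which allows to charge the coloring cost at this level to a single interval graph of clique number at most $r$, costing only $O(r)$ colors by perfection of interval graphs. Summing across the $O(\log r)$ levels yields a coloring with at most $O(r\log r)$ colors, with explicit tracking of the multiplicative constants giving the claimed $320\cdot r\log(2r)$.

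The main obstacle is the level interaction: a vertex from a small-scale level may be adjacent to many vertices from a large-scale level, and a naive per-level coloring may fail to handle these cross-level edges with $O(r)$ additional colors per level. The delicate step is therefore to show that the cross-level interactions at any sweep position still induce an interval graph of clique number $O(r)$, so that each level genuinely contributes only $O(r)$ colors rather than a larger quantity. This is the technical heart of the Chalopin--Wood argument and is where the explicit constant $320$ emerges.
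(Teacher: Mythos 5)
First, a point of context: the paper does not prove this statement at all --- it is imported as a black box from \cite{CW} (Chalermsook and Walczak), with the remark that the constant $320$ ``can be easily deduced from their proof.'' So there is no in-paper argument to compare yours against; what matters is whether your text constitutes a proof on its own. It does not. Your first step (the sweepline observation that the active vertical intervals at the left endpoint of $I_{v_i}^x$ form an interval graph of clique number at most $r$, giving an $O(r^2)$ bound \`a la Asplund--Gr\"unbaum) is correct. But the passage from $O(r^2)$ to $O(r\log r)$ is exactly the content of the theorem, and your proposal explicitly defers it: you write that ``the delicate step is therefore to show that the cross-level interactions at any sweep position still induce an interval graph of clique number $O(r)$'' and that this is ``the technical heart'' of the argument --- without proving it. A dyadic bucketing by the length of $I_v^y$ does not by itself control cross-level edges (a short rectangle can meet arbitrarily many long ones at a given sweep position), nor does it obviously handle same-level ``crossing'' pairs whose intersection contains no corner of either rectangle, which is precisely where the Asplund--Gr\"unbaum corner argument breaks down and where the actual argument of \cite{CW} does real work. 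The constant $320$ is likewise asserted rather than derived.

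In short: the outline points in a plausible direction, but the key lemma is named rather than proved, so this is a gap, not a proof. (Minor: the result is due to Chalermsook and Walczak, not ``Chalopin--Wood.'') If you intend to reprove the theorem rather than cite it, you must supply the level-interaction lemma in full; otherwise the honest move is to do what the paper does and cite \cite{CW}.
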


\begin{theorem}[\cite{AG}]\label{thm:AG}
Every triangle-free graph with boxicity at most two has chromatic number at most 6.
\end{theorem}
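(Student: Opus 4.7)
The statement is the classical theorem of Asplund and Gr\"unbaum. My plan is to work directly with a $2$-box representation $(R_v)_{v\in V(G)}$ of $G$ as an intersection graph of axis-aligned rectangles in the plane. The first step uses the Helly property for axis-aligned rectangles in $\mathbb{R}^2$: any pairwise intersecting finite family has a common point. Together with the triangle-freeness of $G$, this yields that no point of the plane lies in more than two rectangles of the representation, i.e.\ the maximum depth of the arrangement is at most $2$.

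The second step is to analyse the arrangement by a left-to-right sweep line. At any moment during the sweep, the set of \emph{active} rectangles -- those whose $x$-intervals contain the current sweep position -- pairwise intersect in the $x$-coordinate, so two of them are adjacent in $G$ if and only if their $y$-intervals intersect. Hence the induced subgraph of $G$ on the active set is a triangle-free interval graph in the $y$-coordinate, and in particular a forest. In this forest, when a new rectangle $R$ enters the active set, its $G$-neighbours among active rectangles form an independent set (by triangle-freeness); their $y$-intervals are pairwise disjoint yet all meet $R$'s $y$-interval, so at most two of them extend beyond $R$'s $y$-interval (one below, one above) while the remaining neighbours have $y$-intervals strictly inside $R$'s $y$-interval.

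The third and most delicate step is to maintain, throughout the sweep, a proper $6$-coloring of the active rectangles together with an invariant that ties each color to the position of its rectangle inside the evolving forest. The coloring would use the $6$ colors in a structured way -- for example, grouping them as three pairs and assigning the pair according to the "depth parity" of the rectangle in the forest together with its relative vertical orientation -- so that the colors of the $G$-neighbours of any newly inserted rectangle $R$ occupy at most $5$ of the $6$ available colors. This leaves at least one color free for $R$, and the same invariant is preserved after $R$ is inserted.

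The main obstacle lies in the third step: the invariant must be rigid enough to force at most five distinct colors on the active neighbours of any newly inserted rectangle, but flexible enough to survive rectangles leaving the active set without requiring a global recoloring. Striking this balance is the technical heart of the Asplund--Gr\"unbaum argument; the structural inputs that make it work are the Helly-based depth-two property from the first step and the forest structure from the second step.
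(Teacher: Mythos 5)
The paper does not prove this statement at all --- it is quoted verbatim from Asplund and Gr\"unbaum \cite{AG} and used as a black box --- so there is no internal proof to compare against; your proposal must therefore stand on its own as a proof of the classical theorem, and it does not. Your first two steps are correct and standard: the Helly property for axis-parallel boxes together with triangle-freeness gives depth at most $2$, and the active set at any sweep position induces a triangle-free interval graph, hence a forest, in which the neighbours of a newly inserted rectangle $R$ form an independent set with at most two $y$-intervals protruding beyond $R$'s and the rest nested inside it. But these facts by themselves prove nothing about the chromatic number.

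The genuine gap is your entire third step. The nested neighbours of a newly inserted rectangle can be arbitrarily numerous, so the assertion that the active neighbours of $R$ occupy at most $5$ of the $6$ colours is precisely the theorem in disguise, and you never exhibit the invariant that would force it: ``grouping the colours as three pairs according to depth parity and vertical orientation'' is not a definition one can check, and you concede yourself that making it work is ``the technical heart'' of the argument. Moreover, a left-to-right greedy sweep only sees the neighbours already active at insertion time; the colour of $R$ must also be compatible with all rectangles inserted later while $R$ remains active, which is exactly why first-fit-style arguments on non-interval graphs require a carefully maintained global invariant rather than a local count. Note also that the actual Asplund--Gr\"unbaum proof does not proceed by a sweep at all: it partitions the edge set according to the geometric type of the intersection (one rectangle containing a corner of the other versus the two rectangles crossing) and colours the resulting pieces separately. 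As written, your proposal is a plausible plan with its decisive step missing, not a proof.
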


We say that a graph of local boxicity at most 2 is of \emph{type
  $(1,1)$} if the graph has a 2-local box representation in which
each box is local in at most one dimension
distinct from the first dimension.

\begin{lemma}\label{lem:type11}
Let $G$ be a graph of type $(1,1)$ and clique number
$\omega(G)=r$. Then $\chi(G)\le 320 r^2\log(2r)$. Moreover, if $r=2$, then $\chi(G)\le 12$.
\end{lemma}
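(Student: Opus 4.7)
The plan is to decompose the edges of $G$ into a ``within-part'' subgraph $W$ and a ``cross-part'' subgraph $C$, to bound their chromatic numbers separately, and to combine them through a product colouring. Fix a 2-local box representation of $G$ that witnesses type $(1,1)$, and for each $i\ge 2$ let $V_i$ be the set of vertices whose second local dimension is $i$; let $V_1^*$ be the remaining vertices (local only in dimension 1, or not local at all). This gives a partition $V(G)=V_1^*\cup\bigsqcup_{i\ge 2}V_i$. The key structural observation is that for $u\in V_i$ and $v\in V_j$ with $i\ne j$, adjacency in $G$ depends \emph{only} on whether their dimension-1 intervals meet, whereas within each $V_i$ (resp.\ $V_1^*$) adjacency is governed by the dimension-1 and dimension-$i$ intervals (resp.\ by dimension 1 alone). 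A handful of universal vertices (if any) are set aside and coloured with at most $r$ extra colours at the end.

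For the within-part subgraph $W=G[V_1^*]\sqcup\bigsqcup_{i\ge 2}G[V_i]$ the bounds are immediate: $G[V_1^*]$ is an interval graph of clique number at most $r$, and each $G[V_i]$ is the intersection of two interval graphs, hence a rectangle graph of clique number at most $r$. Applying \cref{thm:CW} to each $G[V_i]$ yields $\chi(W)\le 320r\log(2r)$, and in the triangle-free case ($r=2$) applying \cref{thm:AG} yields $\chi(W)\le 6$.

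The main technical step is bounding the chromatic number of the cross-part subgraph $C$, whose edges are exactly the $G$-edges between different parts. For each part $V_i$ (including $V_1^*$), decompose its dimension-1 footprint $F_i=\bigcup_{v\in V_i}I_v^{(1)}$ into its maximal subintervals, which I call \emph{segments}; each vertex then lies in a unique segment of its part. Let $R'$ be the intersection graph of the collection of all segments, which is itself an interval graph. Since two segments of the same part are disjoint by maximality, every edge of $R'$ joins segments from different parts. The claim is that $\omega(R')\le r$: any family of pairwise overlapping segments comes from pairwise distinct parts, and by Helly's theorem in dimension one they share a common point $t$; picking one vertex of each of those parts active at $t$ gives a clique in $G$ of the same size. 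Since interval graphs are perfect, $\chi(R')=\omega(R')\le r$, and assigning each vertex the $R'$-colour of its segment defines a ``super-colour'' that is proper for $C$, because any cross-part edge corresponds to overlapping segments belonging to different parts.

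To conclude, I combine the super-colour with a proper colouring of $W$ via the standard product construction $c(v)=(c_W(v),c_C(v))$: every $G$-edge is either a within-part edge (distinguished by $c_W$) or a cross-part edge (distinguished by $c_C$), so the pair colouring is proper. This produces at most $r\cdot 320r\log(2r)=320r^2\log(2r)$ colours in general, and $2\cdot 6=12$ colours when $r=2$. The main obstacle is verifying the Helly-based bound $\omega(R')\le r$; the rest is routine manipulation of the representation plus black-box application of \cref{thm:CW,thm:AG}.
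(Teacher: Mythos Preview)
Your proposal is correct and follows essentially the same approach as the paper: your parts $V_1^*,V_2,\ldots$ are the paper's $D_1,D_2,\ldots$, your ``segments'' are exactly the intervals $I_S$ coming from connected components of $G_1[D_i]$, your auxiliary interval graph $R'$ is the paper's $H$, and the product colouring $(c_W,c_C)$ matches the paper's pair colouring. The only cosmetic differences are that you invoke Helly explicitly where the paper speaks directly of a common intersection point, and you colour each $G[V_i]$ rather than each $G[S]$---but since both have boxicity at most~2, the same bound from \cref{thm:CW} (or \cref{thm:AG}) applies; your aside about universal vertices is unnecessary (they sit harmlessly in $V_1^*$) but not incorrect.
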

\begin{proof}
Consider a $d$-dimensional 2-local box representation $(B_v)_{v\in G}$ of $G$ such
that each box  is local in at most one
dimension
distinct from the first dimension. For each vertex $v$, let
$I_v$ be the interval obtained by projecting $B_v$ on the first
dimension.  Let $G_1$ be the interval supergraph of $G$ associated to
the intervals $(I_v)_{v\in G}$.  For every $2\le i \le d$, let $D_i$
be the set of vertices whose boxes are local in
dimension $i$. Let $D_1=V(G)-\bigcup_{2\le i \le d} D_i$. Observe that
$D_1,\ldots,D_d$ form a partition of the vertex-set of $G$. For any
$1\le i \le d$, let
$\mathcal{P}_i$ be the family of connected components of $G_1[D_i]$, and
let $\mathcal{P}=\bigcup_{1\le i \le d} \mathcal{P}_i$ (note that
$\mathcal{P}$ forms a partition of $V(G)$). 

For each set $S\in\mathcal{P}$, $G_1[S]$ is connected and thus the set
$I_S = \bigcup_{v\in S} I_v$ is an interval. Let $H$ be the interval graph with vertex set
$\mathcal{P}$ associated to the family of intervals $(I_S)_{S\in \mathcal{P}}$. We
claim that $H$ has clique number at most $r$. Indeed, suppose for the
sake of contradiction that $r+1$ of the intervals $(I_S)_{S\in
  \mathcal{P}}$ have non-empty intersection. Then there are $r+1$ vertices $v_1, v_2,\dots,v_{r+1}$ from different
sets of $\mathcal{P}$, for which $\bigcap_{1\le i\le r+1} I_{v_i} \neq
\varnothing$. Thus, the graph $G_1$ contains the complete graph on
$v_1,v_2,\dots,v_{r+1}$. Note that if there is an edge in $G_1$
between two vertices $u,v$ lying in different sets $S_u,S_v\in\mathcal{P}$, respectively, then the sets $S_u$ and $S_v$ do not
lie in the same set $D_i$, and thus $u$ and $v$ are adjacent in $G$. This shows that every edge of $G_1$ between vertices
from different sets of $\mathcal{P}$ also appears in $G$, which implies
that $G$ contains a clique of size $r+1$, a contradiction. Hence, $H$ has clique number at most $r$, and since it is an interval
graph, $\chi(H)\le r$. Fix an $r$-coloring of $H$, and for each $S\in
\mathcal{P}$, fix a coloring of $G[S]$ with at most $320 r\log(2r)$
colors (such a coloring exists by \cref{thm:CW} since $G[S]$ has
boxicity at most 2).

We now assign to each vertex $v\in G$ a pair of colors as follows. Let
$S\in \mathcal{P}$ be such that $v\in S$. Then, the first color assigned to $v$ is the color of
$S\in V(H)=\mathcal{P}$ in the
coloring of $H$, and the second color is the color of $v$ in
$G[S]$. The total number of colors assigned is at most $r\cdot
320 r\log(2r)=320 r^2\log(2r)$. It remains to prove that this is a proper
coloring of $G$. Let $uv$ be an edge of $G$. If $u$ and $v$ lie in the
same set $S\in \mathcal{P}$, then since we have chosen a proper
coloring of $G[S]$ the second colors of $u$ and $v$ are different. If $u$
and $v$ lie in different sets $S_u,S_v$ of $\mathcal{P}$, then since
$uv$ is also an edge of $G_1$, $S_u$
and $S_v$ are adjacent in $H$ and thus the first colors of $u$ and $v$
are different. This shows that $\chi(G)\le 320 r^2\log(2r)$.

\smallskip
Assume now that $r=2$. Then by
\cref{thm:AG} each graph $G[S]$ is 6-colorable, and thus $G$ itself
has a coloring with at most $2\cdot 6=12$ colors, as desired.
\end{proof}

We are now ready to prove~\cref{thm:col18} and \cref{thm:col}.

\begin{proof}[Proof of~\cref{thm:col18}]
Let $(B_v)_{v\in G}$ be a 2-local box representation of $G$ in some
dimension $d$. We say that a vertex $v$ is local in dimension $i$
if its $d$-box $B_v$ is local in dimension $i$. 
If $G$ has local boxicity at most 1, it is the complete
join\footnote{The \emph{complete join} of $k$ graphs $G_1,\dots G_k$
  is obtained from the disjoint union of $G_1,\dots G_k$ by adding all
possible edges between $G_i$ and $G_j$, for any $1\le i < j\le k$.} of interval graphs, and since $G$ is triangle-free, this implies
that $G$ itself is a (triangle-free) interval graph, and thus has
chromatic number at most 2. So we can assume that some vertex $v$ is
local in two different dimensions, say 1 and 2
without loss of generality. For any $1\le i\le d$, let $D_i$ be the set of
vertices of $G$ that are local in dimension
$i$. Note that $v\in D_1\cap D_2$, and $G[D_1\cap D_2]$ has boxicity
at most 2, and thus $\chi(G[D_1\cap D_2])\le 6$ by
Theorem~\ref{thm:AG}. This shows that if $V=D_1\cup D_2$,
$\chi(G[V\setminus (D_1\triangle D_2)]) \le 6$.

Note that all
vertices of $V\setminus(D_1\cup D_2)$ are neighbors of $v$, and
therefore form an independent set in $G$.
Since there is no pair of vertices $u\in D_1\cap D_2$ and $ w\in V\setminus (D_1\cup D_2)$ that
are local in a common dimension, all vertices of $D_1\cap
D_2$ are adjacent to all vertices of $V\setminus (D_1\cup D_2)$. In
particular, if $V\ne D_1\cup D_2$, the set $D_1\cap D_2$ is also an
independent set in $G$. In this
case we conclude that $\chi(G[V\setminus (D_1\triangle D_2)]) \le
2$.

\smallskip

If $D_2\setminus D_1$ is an independent set, since $G[D_1]$ is of type
$(1,1)$, it follows from Lemma~\ref{lem:type11} that $\chi(G)\le \chi(G[D_1]) +
\chi(G[D_2\setminus D_1]) + \chi(G[V\setminus (D_1\cup D_2)])\le 12 +
1 + 1 = 14$. The symmetric argument shows that if $D_1\setminus D_2$
is an independent set, then $\chi(G)\le 14$. Consequently, we can
assume that neither $D_1\setminus D_2 $ nor $D_2\setminus D_1$ is an
independent set, and in particular $G[D_1\setminus D_2]$ and $G[D_2\setminus D_1]$ both contain at least one edge. Fix $uw\in G[D_1\setminus D_2]$. We now consider two cases.

\begin{itemize}
    \item Assume first that $u$ and $w$ are local together in a third
      dimension, say dimension 3 without loss of generality. 
      Then, since $uw$ is not part of a triangle in $G$, $D_2\setminus
      D_1\subset D_3$. If $D_1\setminus D_2\not\subseteq D_3$, without
      loss of generality there is a vertex in $D_1\setminus (D_2\cup D_3)$, and this vertex is adjacent to
    every vertex in $D_2\cap D_3 = D_2\setminus D_1$. Thus,
    $D_2\setminus D_1$ is an independent set, which contradicts our
    assumption above. Hence, we can assume that $D_1\setminus
    D_2\subseteq D_3$, and thus $D_1\triangle D_2\subseteq D_3$. Since
    $G[D_3]$ is of type $(1,1)$ it follows from \cref{lem:type11} and
    the discusion above that 
    \begin{equation*}
        \chi(G) \le \chi(G[D_1\triangle D_2])+\chi(G[V\setminus (D_1\triangle
                       D_2)])\le 12 + 6 = 18.
    \end{equation*}
    
    \item  We can now assume without loss of generality that $u\in D_3$ and
      $w\in D_4$. Note that in this case $D_3\neq D_1$ and $D_4\neq D_1$.
      Then, since $uw$ is not part of a triangle in $G$, $D_2\setminus
      D_1\subset D_3\cup D_4$. Moreover, if $D_1\setminus
      D_2\not\subseteq D_3\cup D_4$, then there is a vertex $w\in
      D_1\setminus (D_2\cup D_3\cup D_4)$, which is connected to every
      vertex in $D_2\setminus D_1$, and therefore $D_2\setminus D_1$
      is an independent set, contradicting our assumption. Thus,
      $D_1\triangle D_2\subseteq D_3\cup D_4$. Note that if $D_3\cap
      D_2 = \varnothing$ or $D_4\cap D_2 = \varnothing$, then all
      vertices of $D_2\setminus D_1$ are adjacent to $u$, or all are
      adjacent to $w$, respectively. In both cases we obtain that $D_2\setminus D_1$
      is an independent set, a contradiction. This shows that $D_2\cap
      D_3\neq \varnothing$ and $D_2\cap D_4\neq \varnothing$. Since
      $G[D_1\cap D_3, D_2\cap D_4]$ and $G[D_1\cap D_4, D_2\cap D_3]$
      are complete bipartite graphs, each of $D_1\cap D_3, D_1\cap
      D_4, D_2\cap D_3$ and $D_2\cap D_4$ is an independent set. Since
       $D_1\triangle D_2\subseteq D_3\cup D_4$, this shows that
       $G[D_1\triangle D_2]$ is 4-colorable. Thus,
    \begin{equation*}
        \chi(G)\le \chi(G[D_1\triangle D_2])+\chi(G[V\setminus D_1\triangle D_2]) \le 4+6=10.
    \end{equation*}
\end{itemize}
This concludes the proof of the theorem.\end{proof}

In the proof of~\cref{thm:col} we made no effort to optimize the
multiplicative constant.

\begin{proof}[Proof of~\cref{thm:col}]
We will prove  by induction on $r\ge 2$ that any graph of local
boxicity at most 2 and clique number at most $r$ has chromatic number
at most $320 r^3\log (2r)$. The claim holds for $r=2$
by Theorem~\ref{thm:col18}. Suppose now that $r\ge 3$ and the property
holds for $r-1$. Let $G=(V,E)$ be a graph of local boxicity at most 2 and
clique number at most $r$, and let $(B_v)_{v\in G}$ be a 2-local box
representation of $G$ in some dimension. If $G$ has local boxicity
at most 1, then $G$ is the complete join of interval graphs, and thus
$\chi(G)=\omega(G)=r$, so we can assume that some box $B_v$ is local
in two dimensions, say in dimensions 1 and 2 without loss
of generality. For $i=1,2$, let $D_i$ be the set of vertices whose
boxes are local in dimension $i$ (and note that $v\in D_1\cap D_2$). As
all the vertices of $V \setminus (D_1\cup D_2)$ are neighbors of $v$,
the graph $G[V \setminus (D_1\cup D_2)]$ has clique number at most
$r-1$, and by the induction hypothesis it has a coloring with at most
$320 (r-1)^3\log(2(r-1))\le 320 (r-1)^3\log(2r)$ colors. Each of $G[D_1]$
and $G[D_2]$ is of type $(1,1)$ and thus both $G[D_1]$ and
$G[D_2\setminus D_1]$ have a coloring with at most
$320 r^2\log(2r)$ by \cref{lem:type11}. It follows that $G$ has a
coloring with at most
\[
320 (r-1)^3\log(2r)+2\cdot 320 r^2\log(2r)\le 320 r^3\log(2r)
\]
colors, as desired.
\end{proof}

\section{Conclusion and open problems}

Two natural problems are to close the gap between the lower bound of $\Omega(\sqrt{m}/\log m)$
and the upper bound of $O(\sqrt{m})$ for graphs with $m$ edges, and the lower
bound of $\Omega(\sqrt{g}/\log g)$
and the upper bound of $O(\sqrt{g})$ for graphs of Euler genus $g$.

\medskip

We have proved that the family of graphs of local boxicity at most 2
is $\chi$-bounded, which extends a classical result on graphs with
boxicity at most 2. 
It was proved in~\cite{Kar} that if $G$ is the complement of a graph
of boxicity at most 2, then $\chi(G)=O(\omega(G)\log \omega(G))$, and
thus the family of complements of graphs of boxicity at most 2 is
$\chi$-bounded. A natural question is whether this extends to the family of
complements of graphs of \emph{local boxicity} at most 2.

\begin{question}\label{qn:gc}
Is the family $\{G^c\,|\, \lbox(G)\le 2\}$ $\chi$-bounded?
\end{question}

It was observed by James Davies (personal communication) that the answer to this question is
negative. Given an integer $n\ge 2$, the \emph{shift graph} $S_n$ is
the graph whose vertices are the ordered pairs $(i,j)$ with $1\le i
<j\le n$, with an edge between $(i,j)$ and $(k,\ell)$ if and only if
$j=k$ or $\ell=i$. It can be checked that $S_n$ is triangle-free, and
Erd\H os and Hajnal~\cite{EH} proved that for any $n\ge 2$, $\chi(S_n)=\lceil
\log n\rceil $. James Davies noted that the complement of $S_n$ has
local boxicity at most 2: to see this, map each pair $(i,j)$ to the
$n$-dimensional box whose projection is equal to $\{0\}$ in dimension
$i$, $\{1\}$ in dimension
$j$, and $\mathbb{R}$ in the other dimensions. This shows that the
family of triangle-free graphs in $\{G^c\,|\, \lbox(G)\le 2\}$ has
unbounded chromatic number, and thus \cref{qn:gc} has a negative answer.

\medskip

\noindent {\bf Recent development.} After we made our manuscript public, the authors of \cite{MM} improved their bound on the local boxicity of graphs of maximum degree $\Delta$ to $O(\Delta)$, matching the bound of \cref{thm:main} (and as a consequence, also matching the bound of \cref{thm:edges}). Their proof avoids the use of number theoretic tools altogether.

\medskip

\begin{acknowledgement}
The authors would like to thank Mat\v ej Stehl\'ik for interesting
discussions and for suggesting to investigate the chromatic number of
graphs of local boxicity at most 2 and their complements. We are also
grateful to James Davies for allowing us to present his negative
answer to \cref{qn:gc}, and to Bartosz Walczak for giving us the correct constant in \cref{thm:CW}.
\end{acknowledgement}


\end{document}